\newtheorem{definition}{Definition}[section]
\newtheorem{theorem}[definition]{Theorem}
\newtheorem{lemma}[definition]{Lemma}
\newtheorem{proposition}[definition]{Proposition}
\newtheorem{remark}{Remark}
\newtheorem{example}{Example}
\def\RE {\mathbb{R}}
\def \W  {{d}}
\def \QQ{ {\mathcal{S}}}
\def\a {\alpha}
\def\s {\sigma}
\def\CL { \mathcal{L}}
\def\E {\mathbb{E}}
\def\P {\mathbb{P}}
\def\R {\mathbb{R}}
\def\N {\mathbb{N}}
\def\I {\mathbb{I}}
\def\CF {\mathcal{F}}
\def\CM {\mathcal{M}}
\def\CL {\mathcal{L}}
\def\CB {\mathcal{B}}
\def\CS {\mathcal{S}}
\def\lmq {\lambda_q}
\def\lm {\lambda}
\def \veps {\varepsilon}
\def \vphi {\varphi}
\def \cp {c^+}
\def \cm {c^-}
\def \tcp {\tilde{c}^+}
\def \tcm {\tilde{c}^-}
\def \Mbar {\overline{M}}
\def \Minf {M^{(\alpha)}_\infty}
\def \bX {\overline{X}_0}
\def \bV {\overline{V}_0}
\begin{document}

\title[Speed of convergence in Wasserstein metrics]{Speed of convergence to equilibrium in Wasserstein metrics
for Kac-like kinetic equations.
}

\author{Federico Bassetti      \and
        Eleonora Perversi}

\address{Universit\`a degli Studi di  Pavia, Dipartimento di Matematica, via Ferrata 1,
 27100 Pavia, Italy}

\email{federico.bassetti@unipv.it}
\email{eleonora.perversi@unipv.it}

\date{\today}

 \keywords{Boltzmann-like
equations \and Kac caricature \and stable laws \and rate of convergence to equilibrium \and Wasserstein distances }

\begin{abstract}

This work deals with a class of one-dimensional measure-valued kinetic equations, which constitute extensions of the Kac caricature.
It is known that if the initial datum belongs to the  domain of normal attraction of an $\alpha$-stable law, the solution
of the equation converges weakly to  a suitable scale mixture of
centered $\alpha$-stable laws.
In this paper we present explicit exponential rates for the convergence to equilibrium in Kantorovich-Wasserstein distances
of order $p>\alpha$, under the natural assumption that the distance
between the initial datum and the limit distribution is finite.
For $\alpha=2$ this assumption reduces to the finiteness of the absolute moment of order $p$ of the initial datum.
On the contrary, when $\alpha<2$, the situation is more problematic due to the fact that both the limit distribution
and the initial datum have infinite absolute moment of any order $p >\alpha$.
For this case, we provide sufficient conditions for the finiteness of the Kantorovich-Wasserstein distance.
\end{abstract}

\maketitle

\section{Introduction}
This paper is concerned with the study
of the  speed of convergence to equilibrium 
$-$ with respect to Wasserstein distances $-$ of the
solution of the one--dimensional kinetic equation
\begin{equation}
  \label{eq.1}
  \left\{ \begin{aligned}
      & \partial_t \mu_t + \mu_t =
      {Q}^+(\mu_t,\mu_t )
       \\
      & \mu_0=\bar \mu_0.\\
    \end{aligned}
  \right.
\end{equation}
 The solution $\mu_t =\mu_t(\cdot)$ is a time-dependent
probability measure on $\CB(\R)$, the Borel $\sigma$-field of $\R$.
Following
 \cite{BassLadMatth10,CeGaBo} we assume
that $Q^+$ is a suitable \textit{smoothing transformation}. More
precisely, the probability measure $Q^+(\mu ,\mu )$ is characterized
by
 \begin{equation}\label{eq.2}
  \int_{\R} g(v) Q^+(\mu ,\mu)(dv)=\E \Big [\int_\RE \int_\RE
g\Big(v_1 L+ v_2 R \big)   \mu(dv_1) \mu(dv_2) \Big ],
 \end{equation}
for all bounded and continuous test functions $g \in
C_b(\RE)$,
where $(L,R)$  is a random vector of $\RE^2$ defined on a
probability space $(\Omega,\CF,\P)$ and $\E$ denotes the expectation
with respect to $\P$.

For  suitable choices of $(L,R)$,
 equation \eqref{eq.1}-\eqref{eq.2}
reduces to well-known simplified models for a spatially homogeneous
gas, in which particles move only in one spatial direction. The
basic assumption is that particles change their velocities only
because of binary collisions. When two particles collide, then their
velocities change from $v$ and $w$, respectively, to
 \begin{equation*}
 v' = L_1 v + R_1w  \qquad w' = R_2v + L_2w
 \end{equation*}
where $(L_1,R_1)$ and $(L_2,R_2)$ are two identically distributed random vectors
 with the same law of $(L,R)$.
A  fundamental hypothesis on  $(L,R)$ in this kind of equation is that there
exists an $\alpha$ in $(0,2]$ such that
\begin{align}
  \label{eq.3 1}
  \E\big[ |L|^\alpha+|R|^\alpha\big] = 1 .
\end{align}

The first model of  type \eqref{eq.1}-\eqref{eq.2} has
been introduced by Kac \cite{Kac}, with  collisional parameters
$L=\sin \tilde \theta$ and $R=\cos  \tilde \theta$
{for a random angle $ \tilde \theta$ uniformly distributed  on
$[0,2\pi)$}. 
The {\it inelastic Kac equation}, introduced in \cite{PulvirentiToscani} to describe gases with
inelastically colliding molecules,
corresponds to \eqref{eq.1}-\eqref{eq.2} with 
$L=|\sin
\tilde \theta|^d\sin \tilde\theta$ and $R=|\cos  \tilde
\theta|^d\cos  \tilde\theta$,  where $d>0$ is the parameter of
inelasticity. In this case,  \eqref{eq.3 1} holds with $\alpha=2/(d+1)$.

A less standard application of equations  of type
 \eqref{eq.1}-\eqref{eq.2} is concerned with the construction of   kinetic models
for conservative economies. These models consider the evolution of wealth
distribution in a market of agents which interact through binary
trades, see for example  \cite{BaLaTo,BCC,MatthesToscani,NPT}.

Finally, we mention that, using  results in \cite{CeGaBoBis},  it can be shown
that  the  isotropic solutions of the multidimensional  inelastic homogeneous Boltzmann equation \cite{BoCe}
are functions of one-dimensional $\mu_t$ that are solutions
of equation  \eqref{eq.1}-\eqref{eq.2} for a suitable choice of $(L,R)$
and $\bar \mu_0$.

Recently, the {\it generalized Kac-equation}  \eqref{eq.1}-\eqref{eq.2}
has been extensively studied in many aspects. In particular, the asymptotic behavior of the solutions
of  \eqref{eq.1}-\eqref{eq.2}  has been satisfactory treated in \cite{BaLa,BassLadMatth10,CeGaBo},
while the problem of propagation of smoothness has been addressed in
\cite{MatthesToscani,MaTo} when $\alpha=1$ or $\a=2$.

In \cite{BassLadMatth10} it is proved that, {\it if $L$ and $R$ are positive random variables such that \eqref{eq.3 1} holds true
for $\alpha \in (0,1) \cup (1,2]$, $\E[L^p+R^p]<1$ for some $p >\alpha$  and  $\bar \mu_0$ belongs to the  domain of normal attraction
of an $\alpha$-stable law {\rm(}$\bar \mu_0$ being centered if $\alpha > 1${\rm)}, then
the solution $\mu_t$  converges weakly to a probability measure $\mu_\infty$, that is a mixture of centered $\alpha$-stable distributions.} 
Some extra conditions are needed for the case $\alpha=1$, but the result is essentially
of the same type. For a precise statement of these results, see Theorems \ref{thm2} and \ref{thm1} in Section \ref{sec:convergenceSteady}.
As for the limit distribution, it is easy to see that $\mu_\infty$ is a steady state, that is a fixed point of the smoothing transformation $Q^+$. Moreover,
it has been proved that also the mixing distribution is a fixed point of another smoothing transformation. For more information on fixed points of smoothing transformations
see \cite{DurrettLiggett1983}. See also the very recent paper
\cite{alsmeyerbiggins} and the references therein.

In addition to the problem of finding sufficient (and eventually necessary, see e.g. \cite{GabettaRegazzini2012}) conditions
for the relaxation to the steady state, an important problem is to
determine explicit rates of convergence to the equilibrium with respect to suitable probability metrics.

In the case of the Kac equation, that has the Gaussian distribution as steady state,
rates of convergence with respect to Kolmogorov's uniform metric,  weighted $\chi$-metrics of order $p \geq 2$,
Wasserstein metrics of order 1 and 2  and total variation distance have been proved.
See \cite{dolera,dolera2,GabettaRegazziniWM}.
As for the inelastic Kac equation, in \cite{BaLaRe}
rates of convergence to
equilibrium with respect to
Kolmogorov's uniform metric and  $\chi$-weighted metrics have been derived.
For the solutions of the general model  \eqref{eq.1}-\eqref{eq.2} less is known.
Some results for the Wasserstein distances, of order $p\leq 2$
have been proved in \cite{BaLa,BassLadMatth10}.

The aim of this article is to prove  new exponential bounds for
the speed of approach to equilibrium for the solution of \eqref{eq.1}-\eqref{eq.2}
with respect to Wasserstein
metrics of any order.

Our main results from Theorems \ref{alfa<1}, \ref{alfa1} and \ref{alfa2} can be summarized as follows:

{\it Assume that $L$ and $R$ are positive random variables such that $\P\{L>0\}+\P\{R>0\}>1$,
\eqref{eq.3 1} holds with $\alpha \in (0,1) \cup (1,2]$ and $\E[L^p+R^p]<1$
for some $p>\alpha$. If $\bar{\mu}_0$ belongs to the  domain of normal attraction
of an $\alpha$-stable law {\rm(}$\bar \mu_0$ being centered if $\alpha > 1${\rm)}
and the Wasserstein distance $d_p(\bar \mu_0,\mu_\infty)$ is finite, then
\[
 d_p(\mu_t,\mu_\infty) \leq C_{\bar \mu_0,p} e^{-K_{\alpha,p} t}
\]
for suitable positive constants $C_{\bar \mu_0,p}$ and $K_{\alpha,p}$. }

A similar result holds for $\alpha=1$, see Theorem \ref{alfa1}. The constant $K_{\alpha,p}$, that
will be explicitly computed for $\a<2$, depends only on the law of $(L,R)$, while
$C_{\bar \mu_0,p}$ depends also  on $\bar \mu_0$ and is finite if $d_p(\bar \mu_0,\mu_\infty)<+\infty$.
It is worth noticing that, if $\alpha<2$, the assumption $\W_{p}(\bar \mu_0,\mu_\infty)<+\infty$ is a
non-trivial requirement, since, with the exception of some degenerate case,
one has that $\int_\R |x|^p\bar{\mu}_0(dx)=+\infty$ and $\int_\R |x|^p{\mu}_\infty(dx)=+\infty$
for every $p>\alpha$. For this reason, sufficient conditions for the finiteness of $\W_{p}(\bar \mu_0,\mu_\infty)$
will be presented.

The rest of the paper is organized as follows:
Section \ref{S:preliminaryresults} contains a brief summary of
some known results on the relaxation to equilibrium for the solution of
equation \eqref{eq.1}-\eqref{eq.2}. Section \ref{sct.results}
contains the main results of the paper. More specifically, Subsection \ref{sec:mainresults} presents the exponential bound
for the Wasserstein distance $d_p(\mu_t,\mu_\infty)$
in the case $\alpha <2$. Subsection  \ref{finitenessdp} contains some sufficient condition
for  $\W_{p}(\bar \mu_0,\mu_\infty)<+\infty$ when $\alpha<2$.
Finally, Subsection \ref{sec:mainresults2} treats
the case $\a=2$. The proofs are collected in Sections \ref{Sec_proof1}-\ref{Sec_proofUltima}.

\section{Preliminary results}\label{S:preliminaryresults}

The following assumption will be needed throughout the paper.
 \vskip 0.3cm
\noindent  {\bf Assumption $(H_0)$:} {\it
 $L$ and $R$ are non-negative random variables such that
 \begin{equation}\label{S(0)}
\P\{L>0\}+\P\{R>0\}>1,
\end{equation}
moreover there exist $\alpha$ in $(0,2]$ and $p>0$ satisfying
 \begin{equation}
  \label{eq.3bis}
  \E\big[ L^\alpha+R^\alpha\big] = 1
\end{equation}
  and
\begin{equation}
  \label{eq.3_4}
  \E\big[ L^p+R^p \big] < 1.
\end{equation}
  }
\vskip 0.3cm

For later reference, introduce the convex function $\QQ:[0,\infty)\to[-1,\infty]$ by
\begin{equation*}
  \QQ(s)=\E[L^s+R^s]-1,
\end{equation*}
with the convention that $0^0=0$.
Clearly, under $(H_0)$, $\QQ(\alpha)=0$ and $\QQ(p)<0$. In addition, one has that
\begin{equation}\label{H1}
\P\{(L,R)\in\{0,1\}^2\}<1\quad\text{and}\quad\CS(0)>0.
\end{equation}

\subsection{Probabilistic representation of the solution}\label{probrep}
In this paper we shall use the Fourier formulation of \eqref{eq.1}.
We say that $\mu_t$ is a (weak) solution of
\eqref{eq.1}, with initial condition $\bar \mu_0$, if its
Fourier-Stieltjes transform
 $\hat{\mu}_t(\xi)=\int_\R e^{i\xi v}
\mu_t(dv)$ obeys to the equation
\begin{equation}
  \label{eq.boltzivp}
  \left\{ \begin{aligned}
      & \partial_t\hat{\mu}_t (\xi)+\hat{\mu}_t(\xi) =
      \widehat{Q}^+[\hat{\mu}_t,\hat{\mu}_t](\xi)
       \qquad(t>0, \xi \in  \RE)\\
      & \hat{\mu}_0(\xi):=\int_\R e^{i\xi v} \bar \mu_0(dv)\\
    \end{aligned}
  \right.
\end{equation}
where
\begin{align}
  \label{eq.collop}
  \widehat{Q}^+ [ f,g] (\xi)
  := \E[f(L\xi)g(R\xi)]
\end{align}
for any couple of characteristic functions $(f,g)$.

As in the case of the Kac equation,  it is easy to see that (\ref{eq.boltzivp}) admits a unique solution $\hat{\mu}_t$ (in the class of
the Fourier-Stieltjes transforms) which can be written as  a Wild
series \cite{Wild1951}
\begin{align}
\label{Wild1} \hat{\mu}_t(\xi)=\sum_{n \geq 0} e^{-t}(1-e^{-t})^{n} q_n(\xi),
\end{align}
where $q_0(\xi):=\hat{\mu}_0(\xi)$ and, for $n \geq 1$,
\begin{align}
\label{Wild2}
q_n(\xi):=\frac{1}{n} \sum_{j= 0}^{n-1} \widehat Q^+(q_j,q_{n-1-j})(\xi).
\end{align}

In \cite{BassLadMatth10} it has been shown that
the solution of  \eqref{eq.1} is related
to a suitable stochastic process.
More precisely, the unique solution $\mu_t$ of \eqref{eq.1} with initial datum $\bar{\mu}_0$ is the law of
the weighted random sum
\begin{equation*}
V_t:=\sum_{j=1}^{N_t}\beta_{j,N_t}X_j,
\end{equation*}
with the following elements defined on a sufficiently large probability space $(\Omega, \CF, \P)$:
\begin{itemize}
\item a sequence $(X_j)_{j\geq1}$ of i.i.d. random variables with distribution $\bar\mu_0$;
\item a stochastic process $(N_t)_{t\geq0}$ which takes values in $\N$ and with
        \[
        \P\{N_t=n\}=e^{-t}(1-e^{-t})^{n-1}
        \]
         for every $n\geq1$ and $t\geq0$;
\item  a random array of weights $(\beta_{j,n}:\;j=1,\dots,n)_{n\geq1}$ recursively defined by:
        \begin{equation*}
        \left\{
        \begin{aligned}
        & \beta_{1,1}:=1\\
        & (\beta_{1,2},\beta_{2,2}):=(L_1,R_1)\\
        & (\beta_{1,n+1},\dots,\beta_{n+1,n+1}) \\
&\qquad \qquad :=(\beta_{1,n},\dots,\beta_{I_n-1,n},L_n\beta_{I_n,n},R_n\beta_{I_n,n},\beta_{I_n+1,n},\dots,\beta_{n,n}). \\
        \end{aligned}
        \right.
        \end{equation*}
where $(L_n,R_n)_{n\geq1}$ is a sequence of independent and identically distributed (i.i.d., for short) random vectors with the same distribution of $(L,R)$, and
 $(I_n)_{n\geq1}$ is a sequence of independent random variables such that $I_n$ is uniformly distributed on $\{1,\dots,n\}$ for every $n\geq1$;
\item $(X_j)_{j\geq1}$, $(N_t)_{t\geq0}$, $(L_n,R_n)_{n\geq1}$, $(I_n)_{n\geq1}$ are stochastically independent.
\end{itemize}
As a matter of fact, it is possible to prove that for every $n\geq1$, $\hat{q}_{n-1}$ $-$ defined in \eqref{Wild2} $-$ is the characteristic
function of the random variable
\begin{equation}\label{Wn}
W_n:=\sum_{j=1}^n\beta_{j,n}X_j.
\end{equation}
See the proof of Proposition 1 in \cite{BassLadMatth10}. Since  $V_t=W_{N_t}$, from \eqref{Wild1} it follows   that $\mu_t$ is the law of $V_t$.

\subsection{Martingale of weights and fixed point equations for distributions}
It is easy to prove that, under $(H_0)$, $\sum_{j=1}^n\beta_{j,n}^\alpha$
is a (positive) martingale and hence it converges a.s. (as $n \to +\infty$) to a random
variable $M_{\infty}^{(\alpha)}$. Moreover, $M_{\infty}^{(\alpha)}$  satisfies the fixed point equation
for distributions
\begin{equation}
  \label{eq.3}
M_{\infty}^{(\alpha)}\stackrel{d}=L^\alpha M_{\infty,1}^{(\alpha)} + R^\alpha M_{\infty,2}^{(\alpha)}.
\end{equation}
In \eqref{eq.3}, $M_{\infty,1}^{(\alpha)}$, $M_{\infty,2}^{(\alpha)}$ and $(L,R)$
are stochastically independent,  $M_{\infty,1}^{(\alpha)}$ and  $M_{\infty,2}^{(\alpha)}$  have the same law of
 $M_{\infty}^{(\alpha)}$,  and $Z_1\stackrel{d}=Z_2$ means that
the random variables $Z_1$ and $Z_2$ have the same distribution.
For a proof of these facts see Proposition 2 in \cite{BassLadMatth10}.

Note that equation \eqref{eq.3} can be written in terms of the characteristic function $\hat \nu_\a(\xi)=\E[\exp\{i\xi M_{\infty}^{(\alpha)}\}]$ 
as
  \begin{equation}\label{eq-M}
\hat \nu_\alpha  (\xi)=\E[\hat \nu_\alpha  (L^\alpha \xi) \hat \nu_\alpha  (R^\alpha \xi)  ] \qquad (\xi \in \RE).
\end{equation}

In the next proposition we collect some useful properties of the solution of equations \eqref{eq.3}-\eqref{eq-M}.

\begin{proposition}[\cite{alsmeyerbiggins,DurrettLiggett1983,Liu1998}]\label{Prop0-1}
  Let $(H_0)$ be in force with $\alpha<p$.
Then, there is a unique probability distribution   $\nu_\alpha$ on $\CB(\RE^+)$ with $\int_{\RE^+}  v \nu_\alpha(dv)=1$
and Fourier-Stiletjes transform $\hat \nu_\alpha(\xi)=\int_{\RE}e^{i\xi v}\nu_\alpha(dv)$ satisfying equation \eqref{eq-M}.
Moreover,
   \begin{itemize}
  \item[(i)] If  $L^\alpha+R^\alpha =1$ almost surely, %
  then  $\nu_\alpha(\cdot)=\delta_1(\cdot)$;
  \item[(ii)] If $\P\{ L^\alpha+R^\alpha=1\}<1$,
       then $\nu_\alpha$ is non-degenerate and, for any $q>\alpha$,
    $\int_{\RE^+} v^{\frac{q}{\alpha}} \nu_\alpha(dv)<+\infty$ if and only if $\CS(q)<0$.
   \end{itemize}
\end{proposition}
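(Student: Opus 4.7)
The plan is to reduce the statement to the classical theory of fixed points of smoothing transformations. Setting $T_1 := L^\alpha$ and $T_2 := R^\alpha$, equation \eqref{eq-M} becomes the fixed-point equation $\hat\nu(\xi) = \E[\hat\nu(T_1\xi)\hat\nu(T_2\xi)]$ for a law on $[0,\infty)$, with non-negative weights satisfying $\E[T_1 + T_2] = 1$ by \eqref{eq.3bis}. Writing $\tilde{\CS}(s) := \E[T_1^s + T_2^s] - 1 = \CS(\alpha s)$, one has $\tilde{\CS}(1) = 0$ and, by hypothesis, $\tilde{\CS}(p/\alpha) < 0$ with $p/\alpha > 1$. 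Convexity of $\tilde{\CS}$ then yields $\tilde{\CS}'(1^-) < 0$, the standard Malthusian/Biggins condition under which the classical theory applies.

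For existence I would work with the non-negative mean-one martingale $M_n := \sum_{j=1}^n \beta_{j,n}^\alpha$, which converges almost surely to $M_\infty^{(\alpha)}$ as recalled just before the statement. To identify the limit as a genuine mean-one fixed point, the key step is uniform integrability, which I would obtain by showing $\sup_n \E[M_n^{p/\alpha}] < \infty$. With $p/\alpha = 1 + \varepsilon$ and $\varepsilon > 0$, the one-step recursion for the weights yields an inequality of the form $\E[M_{n+1}^{p/\alpha}\mid\CF_n] \le M_n^{p/\alpha} + (\text{lower-order correction controlled by }\tilde{\CS}(p/\alpha))$; since $\tilde{\CS}(p/\alpha) < 0$, a Topchii--Vatutin/Burkholder-type bound then gives the uniform moment estimate. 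Uniform integrability yields $L^1$-convergence, hence $\E[M_\infty^{(\alpha)}] = 1$, and passing to the limit in the recursive identity delivers \eqref{eq.3}, which is precisely \eqref{eq-M}.

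Uniqueness among distributions on $[0,\infty)$ with mean one is the classical result for smoothing transformations in the critical case: under $\tilde{\CS}'(1^-) < 0$, the law of $M_\infty^{(\alpha)}$ is the only mean-one solution of \eqref{eq-M}. This is Durrett--Liggett \cite{DurrettLiggett1983} in its original form and is sharpened by Liu \cite{Liu1998} and Alsmeyer--Biggins \cite{alsmeyerbiggins}. Claim (i) is then immediate: when $L^\alpha + R^\alpha = 1$ almost surely, the recursive construction preserves $M_n \equiv 1$, so $\nu_\alpha = \delta_1$. For the non-degeneracy half of (ii), a degenerate mean-one fixed point would have to be $\delta_1$, and then \eqref{eq.3} would force $L^\alpha + R^\alpha = 1$ a.s., contradicting the hypothesis.

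The main technical content, and the step I expect to be the real obstacle, is the moment equivalence in (ii). Necessity of $\CS(q) \le 0$ follows by raising \eqref{eq.3} to the power $q/\alpha$ and taking expectations, using independence; strict inequality then comes from the non-degeneracy of $\nu_\alpha$ and the strict convexity of the relevant bounds. Sufficiency, namely showing $\E[(M_\infty^{(\alpha)})^{q/\alpha}] < \infty$ whenever $\CS(q) < 0$, requires iterating the fixed-point equation with convexity/interpolation estimates driven by $\tilde{\CS}(q/\alpha) < 0$ and is delicate in the range $q/\alpha > 1$. Rather than redo these estimates, I would invoke Theorem 2.1 of \cite{Liu1998} together with the refinements in \cite{alsmeyerbiggins}, which cover exactly our situation and deliver the stated equivalence without modification.
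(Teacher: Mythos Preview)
Your proposal is correct and matches the paper's approach: the proposition is not proved in the paper but is stated as a collection of known results from \cite{DurrettLiggett1983,Liu1998,alsmeyerbiggins}, and your sketch correctly identifies the reduction to the classical smoothing-transformation framework via $T_i = L^\alpha, R^\alpha$ together with the precise references (Durrett--Liggett for existence/uniqueness, Liu's Theorem~2.1 for the moment characterization) that the paper itself cites. One small slip: the derivative condition you want is $\tilde{\CS}'(1^+) < 0$ (or equivalently $\E[T_1\log T_1 + T_2\log T_2] < 0$), not $\tilde{\CS}'(1^-)$, but your convexity argument actually delivers the correct inequality.
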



\subsection{Stable laws}
Recall that
a probability distribution  $ g_\a$ is said to be
{\em a centered stable law} of exponent $\a$ (with $0 < \a \leq 2$) and real parameters $(\lm,\beta)$, $\lm>0$ and $|\beta|\leq1$, if its Fourier-Stieltjes transform $\hat g_\alpha(\xi)=\int_\R e^{i\xi v} g_\alpha(dv)$  has the form
\begin{equation}
  \label{chaSta}
  \hat g_\a(\xi)=
  \left \{
    \begin{array}{ll}
      \exp\{ -\lm |\xi|^\a
      (1-i \beta \tan(\pi\alpha/2)\operatorname{sign}\xi)   \} &  \text{if $\a \in (0,1) \cup (1,2)$}  \\
      \exp\{ -\lm |\xi|
      (1+2i\beta/\pi \log|\xi| \operatorname{sign}\xi)   \} &  \text{if $\a=1$} \\
      \exp\{ - \lm |\xi|^2 \}\nobreakspace& \text{if $\a=2$.}
    \end{array}
  \right .
\end{equation}

By definition, a probability measure $\bar \mu_0$
belongs to the {\em   domain of normal attraction} of
a stable law of exponent $\a$ if for any sequence of i.i.d. real-valued
random variables $(X_n)_{n \geq 1}$
with common distribution  $\bar \mu_0$,
there exists a sequence of real numbers $(c_n)_{n \geq 1}$
such that the law of
\(
{n^{-1/\a}} \sum_{i=1}^n X_i -c_n
\)
converges weakly to a stable law of exponent $\a$.

It is well-known  that, provided $\a\not=2$,
a probability measure $\bar \mu_0$ belongs to the { domain of normal attraction} of an $\a$-stable law
if and only if its distribution function $F_0(x):=\bar \mu_0\Big((-\infty,x]\Big)$ satisfies
\begin{equation}
  \label{stabledomain}
    \lim_{x \to +\infty} x^\a (1-F_0(x)) =c_0^+<+\infty, \quad
    \lim_{x \to -\infty} |x|^\a F_0(x) =c_0^-<+\infty. 
\end{equation}
Typically, one also requires that $c_0^++c_0^->0$ in order to exclude
convergence to the probability measure concentrated in $0$,
but here we shall include the situation $c_0^+=c_0^-=0$ as a special case.
The parameters $\lm$ and $\beta$ of the associated stable law
in \eqref{chaSta}
are related to $c_0^+$ and $c_0^-$ by
\begin{equation}
  \label{constant}
  \lm =  \frac{(c_0^{+}+c_0^{-})\pi}{2\Gamma(\a)\sin(\pi\a/2)},
  \qquad \beta = \frac{c_0^{+}-c_0^{-}  }{c_0^{+}+c_0^{-}},
\end{equation}
with the convention that $\beta=0$ if $c_0^++c_0^-=0$.
In contrast, if $\a=2$,
$F_0$ belongs to the   domain of normal attraction of a Gaussian law
if and only if it has finite variance $\s^2$. The parameter $\lm$ of the associated Gaussian law in \eqref{chaSta} is given by $\lm=\frac{\sigma^2}{2}$.
%
See for example Chapter 17 of \cite{fristedgray} and Chapter 2 of \cite{Ibragimov}.

\subsection{Convergence to Steady states}\label{sec:convergenceSteady}

We are ready to state the results concerning the convergence
of $\mu_t$ to a steady state, that is a probability measure $\mu_\infty$ such that
\[
\mu_\infty=Q^+(\mu_\infty,\mu_\infty).
\]

\begin{theorem}[\cite{BassLadMatth10}]\label{thm2}
  Assume that $(H_0)$ holds true with $\a \not=1$
  and that  $F_0$ satisfies \eqref{stabledomain}. In addition,  assume that
  $\int_\R v \bar \mu_0(dv)=0$ if $\alpha>1$.
  If $p<\alpha$, then $\mu_t$ converges weakly to the degenerate probability measure $\delta_0$,
  while, if $p>\alpha$, then $\mu_t$ converges weakly to a steady state
  $\mu_\infty$ with Fourier-Stieltjes transform
  \begin{equation}
    \label{characteristic}
    \int_\R e^{i \xi v} \mu_\infty(dv)=\int_{[0,+\infty)}
e^{-\lm m |\xi|^\alpha[1- i\beta\tan(\alpha\frac{\pi}{2})\operatorname{sign}\xi]} \nu_\alpha(dm)
    \qquad (\xi \in \RE),
  \end{equation}
  where
 $\nu_\alpha$ is the same as in {\rm Proposition  \ref{Prop0-1}} and the parameters $\lm$ and $\beta$ are defined in \eqref{constant}
  for $\a<2$ and $(\lm,\beta)=(\s^2/2,0)$ for $\a=2$.
  \end{theorem}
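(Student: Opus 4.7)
The plan is to exploit the probabilistic representation of Section \ref{probrep} and reduce the weak convergence of $\mu_t$ to the asymptotic behavior of the Wild sums $W_n=\sum_{j=1}^n\beta_{j,n}X_j$ as $n\to\infty$. Since $V_t=W_{N_t}$ with $N_t$ independent of $(W_n)$ and $N_t\to+\infty$ almost surely, dominated convergence applied to $\hat\mu_t(\xi)=\E[\hat q_{N_t-1}(\xi)]$ reduces matters to proving that $\hat q_{n-1}(\xi)=\E\bigl[\prod_{j=1}^n\hat\mu_0(\beta_{j,n}\xi)\bigr]$ converges pointwise to the claimed limit. The Fourier-analytic input I would use is the classical characterization of the domain of normal attraction: as $\xi\to 0$,
\[
\hat\mu_0(\xi)=1+\psi(\xi)+o(|\xi|^\alpha),
\]
where $\psi(\xi)=-\lm|\xi|^\alpha(1-i\beta\tan(\pi\alpha/2)\operatorname{sign}\xi)$ for $\alpha\in(0,1)\cup(1,2)$ (with $\E X_1=0$ when $\alpha>1$) and $\psi(\xi)=-\lm\xi^2$ for $\alpha=2$. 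The crucial feature is the homogeneity $\psi(c\xi)=c^\alpha\psi(\xi)$ for $c\geq 0$.

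For $p>\alpha$, I would condition on the weight array. A one-step computation based on the recursion for $(\beta_{j,n})$ and averaging over $I_n$ yields $\E[\sum_j\beta_{j,n}^s]=\prod_{k=1}^{n-1}(1+\CS(s)/k)\sim C\,n^{\CS(s)}$; since $\CS(p)<0$, the elementary bound $\max_{j\leq n}\beta_{j,n}\leq(\sum_j\beta_{j,n}^p)^{1/p}$ forces $\max_{j\leq n}\beta_{j,n}\to 0$ in probability. Together with the a.s.\ martingale convergence $\sum_j\beta_{j,n}^\alpha\to M_\infty^{(\alpha)}$ recorded in Section \ref{probrep} and the homogeneity of $\psi$, the triangular-array expansion
\[
\sum_{j=1}^n\log\hat\mu_0(\beta_{j,n}\xi)=\psi(\xi)\sum_{j=1}^n\beta_{j,n}^\alpha+\mathrm{error}
\]
gives $\prod_j\hat\mu_0(\beta_{j,n}\xi)\to\exp\{\psi(\xi)M_\infty^{(\alpha)}\}$ in probability, and bounded convergence delivers $\hat q_{n-1}(\xi)\to\E[\exp\{\psi(\xi)M_\infty^{(\alpha)}\}]$, which matches the right-hand side of \eqref{characteristic} once the law of $M_\infty^{(\alpha)}$ is identified with $\nu_\alpha$ via Proposition \ref{Prop0-1}.

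For $p<\alpha$, convexity of $\CS$ combined with $\CS(\alpha)=0$ and $\CS(p)<0$ produces some $p'\in(p,\alpha)$ with $\CS(p')<0$; since $p'<\alpha\leq 2$, one has $\E[|X_1|^{p'}]<\infty$. Using subadditivity of $x\mapsto|x|^{p'}$ if $p'\leq 1$, or a von Bahr--Esseen inequality if $1<p'\leq 2$ (which needs $\E X_1=0$, guaranteed by $\alpha>1$), one estimates
\[
\E|V_t|^{p'}\leq C\,\E|X_1|^{p'}\,\E\Bigl[\sum_j\beta_{j,N_t}^{p'}\Bigr]\to 0,
\]
so $V_t\to 0$ in $L^{p'}$ and hence $\mu_t\Rightarrow\delta_0$. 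The delicate step is the triangular-array limit in the case $p>\alpha$: the $o(|\xi|^\alpha)$ remainder must be controlled uniformly in $j$ using only the in-probability smallness of $\max_j\beta_{j,n}$, which I expect requires upgrading the DNA expansion to a quantitative bound $|\hat\mu_0(\eta)-1-\psi(\eta)|\leq h(\eta)$ with $h(\eta)=o(|\eta|^\alpha)$ and an integrable majorant, so that the cumulative error $\sum_j h(\beta_{j,n}\xi)$ can be squeezed to zero in expectation.
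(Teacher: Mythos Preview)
The paper does not prove Theorem~\ref{thm2}: it is quoted from \cite{BassLadMatth10}. Your sketch is correct and is essentially the strategy of that reference, as one can infer from the paper's Appendix~\ref{AppendixB}, where the analogous $\alpha=1$ statement (Theorem~\ref{thm1}) is handled. The only packaging difference is that \cite{BassLadMatth10} (and Appendix~\ref{AppendixB}) condition on the weight array and then invoke the classical general central limit theorem for triangular arrays (Theorem~30 and Proposition~11 of \cite{fristedgray}), checking the three Lévy--Khintchine--type conditions $\zeta_n(x)\to a_\infty c_0^\pm/|x|^\alpha$, $\sigma_n^2(\epsilon)\to 0$, $\eta_n\to 0$, whereas you carry out the equivalent characteristic-function computation by hand via the expansion $\hat\mu_0(\xi)=1+\psi(\xi)+o(|\xi|^\alpha)$. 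Both routes need exactly the inputs you isolate: $\max_{j\leq n}\beta_{j,n}\to 0$ (which in \cite{BassLadMatth10} is in fact obtained almost surely, though in-probability convergence suffices after bounded convergence, as you note) and the martingale limit $\sum_j\beta_{j,n}^\alpha\to M_\infty^{(\alpha)}$. Your treatment of the case $p<\alpha$ via an intermediate exponent $p'\in(p,\alpha)$ with $\CS(p')<0$ and subadditivity/von~Bahr--Esseen is also the argument used there.
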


We conclude this section by considering the case in which $\a=1$. We state a slight variant
of Theorem 4 in \cite{BassLadMatth10}.

\begin{theorem}\label{thm1}
  Assume that $(H_0)$ holds with $\a=1$.
  Suppose that $F_0$ satisfies
\begin{equation}\label{NDA-1sym}
\lim_{x\to-\infty}|x|F_0(x)=\lim_{x\to+\infty}x\Big[1-F_0(x)\Big]=c_0\in [0,+\infty)
\end{equation}
and suppose, in addition, that
\begin{equation}\label{gamma}
\lim_{R\to+\infty}\int_{(-R,R)}x dF_0(x)=\gamma_0
\end{equation}
with $-\infty<\gamma_0<+\infty$.
 If $p<1$, then $\mu_t$ converges weakly to the degenerate probability measure $\delta_0$,
 while, if $p>1$, then $\mu_t$ converges weakly, as $t\to +\infty$, to a steady state $\mu_\infty$ with Fourier-Stieltjes transform
\begin{equation}\label{characteristic2}
\int_\RE e^{i \xi v} {\mu}_\infty(dv)=\int_{\RE^+} e^{ m(i  \gamma_0 \xi- c_0\pi|\xi|)} \nu_1(dm)
\end{equation}
where $\nu_1$ is the same as in {\rm Proposition  \ref{Prop0-1}}.
\end{theorem}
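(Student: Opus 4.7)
The plan is to exploit the probabilistic representation $V_t=W_{N_t}$ with $W_n=\sum_{j=1}^n\beta_{j,n}X_j$ recalled in Section \ref{probrep}, using that $N_t\to +\infty$ a.s. I would split the argument by whether $p<1$ or $p>1$, since the available moments of $\bar\mu_0$ differ.

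Case $p<1$: Here I would argue directly at the level of absolute moments. Since $L,R\ge 0$ implies $\beta_{j,n}\ge 0$, subadditivity of $x\mapsto x^p$ on $[0,\infty)$ gives
\[
\E|W_n|^p\le \E\Big[\sum_{j=1}^n\beta_{j,n}^p\Big]\,\E|X_1|^p,
\]
where $\E|X_1|^p<+\infty$ because \eqref{NDA-1sym} forces tails of order $|x|^{-1}$ and $p<1$. A direct calculation from the recursive definition of the $\beta$'s yields
\[
\E\Big[\sum_{j=1}^n\beta_{j,n}^p\Big]=\prod_{k=1}^{n-1}\Big(1+\tfrac{\QQ(p)}{k}\Big),
\]
which tends to $0$ since $\QQ(p)<0$. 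Averaging over $N_t$ gives $\E|V_t|^p\to 0$, so $\mu_t$ converges weakly to $\delta_0$.

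Case $p>1$: Here I would work at the Fourier level. Conditioning on the weights,
\[
\hat q_{n-1}(\xi)=\E\Big[\prod_{j=1}^n\hat\mu_0(\beta_{j,n}\xi)\Big].
\]
The symmetric tail condition \eqref{NDA-1sym} combined with the principal value condition \eqref{gamma} yields the standard expansion for laws in the normal domain of attraction of a symmetric $1$-stable law:
\[
\hat\mu_0(\xi)-1=i\gamma_0\xi-c_0\pi|\xi|+|\xi|\eta(\xi),\qquad \eta(\xi)\to 0\ \text{as}\ \xi\to 0.
\]
Taking logarithms once $\max_j\beta_{j,n}|\xi|$ is small and summing with $\beta_{j,n}\ge 0$,
\[
\sum_{j=1}^n\log\hat\mu_0(\beta_{j,n}\xi)=(i\gamma_0\xi-c_0\pi|\xi|)M_n^{(1)}+|\xi|\sum_j\beta_{j,n}\eta(\beta_{j,n}\xi)+O\Big(\sum_j\beta_{j,n}^2\Big),
\]
where $M_n^{(1)}:=\sum_j\beta_{j,n}$. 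Under $(H_0)$ with $1=\alpha<p$, Proposition \ref{Prop0-1} guarantees that $M_n^{(1)}$ is a uniformly integrable non-negative martingale converging a.s.\ and in $L^1$ to $M_\infty^{(1)}\sim\nu_1$. The same moment computation at exponent $p$ gives $\E[\sum_j\beta_{j,n}^p]\to 0$, whence $\max_j\beta_{j,n}\to 0$ in probability; since $\sum_j\beta_{j,n}^2\le (\max_j\beta_{j,n})\,M_n^{(1)}$, both error terms vanish. Exponentiating and applying dominated convergence (the product is bounded by $1$ in modulus),
\[
\hat q_{n-1}(\xi)\longrightarrow \E\big[\exp\{M_\infty^{(1)}(i\gamma_0\xi-c_0\pi|\xi|)\}\big],
\]
which is exactly \eqref{characteristic2}. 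A final bounded convergence, combined with $\hat\mu_t(\xi)=\sum_n\P(N_t=n)\hat q_{n-1}(\xi)$ and $N_t\to\infty$, transfers the limit to $\hat\mu_t(\xi)$.

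The main obstacle is the control of the error $|\xi|\sum_j\beta_{j,n}\eta(\beta_{j,n}\xi)$ in the case $\alpha=1$: although $M_n^{(1)}$ stays bounded by the martingale property, the factor $\eta(\beta_{j,n}\xi)$ is small only when $\max_j\beta_{j,n}$ vanishes. Obtaining that decay of the maximum weight is the non-trivial input supplied by the assumption $p>\alpha$. It is also worth emphasizing that the symmetric tail and finite principal value mean in \eqref{NDA-1sym}-\eqref{gamma}, rather than merely the tail exponent $\alpha=1$, are essential, as they remove the logarithmic correction that would otherwise appear in the expansion of $\hat\mu_0$ for asymmetric $1$-stable attractor laws.
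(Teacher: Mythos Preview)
Your proposal is correct and, for the case $p<1$, essentially coincides with the standard moment argument from \cite{BassLadMatth10} that the paper implicitly invokes. For $p>1$ the two routes diverge in implementation, though not in spirit.

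The paper proves the key conditional limit by verifying the classical triangular-array CLT conditions directly on the distribution functions: with $Q_{j,n}(x)=F_0(x/a_{j,n})$ it checks the tail condition $\zeta_n(x)\to a_\infty c_0/|x|$, the truncated variance condition $\sigma_n^2(\epsilon)\to 0$, and the centering condition $\eta_n\to a_\infty\gamma_0$, appealing to the general CLT in \cite{fristedgray}. Each condition is checked by elementary manipulations using only \eqref{NDA-1sym} and \eqref{gamma}, and this is then applied conditionally on the weight array with $a_{j,n}=\beta_{j,n}$, $a_\infty=M_\infty^{(1)}$. You instead work directly in Fourier space, taking as input the expansion $\hat\mu_0(\xi)=1+i\gamma_0\xi-c_0\pi|\xi|+o(|\xi|)$ and summing the logarithms. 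Your route is shorter and more transparent once that expansion is granted, and it makes the role of $\max_j\beta_{j,n}\to 0$ (hence of the hypothesis $p>\alpha$) very explicit; the paper's route has the advantage of deriving the limiting Cauchy parameters from first principles without first establishing the characteristic-function asymptotics, which is itself a nontrivial classical fact essentially equivalent to the three conditions being verified. Both approaches then conclude by the same randomization over $N_t$ via bounded convergence.
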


This theorem can be proved in a very similar way of Theorem 1 of \cite{BassLadMatth10}, for the sake of completeness 
a sketch of the proof is given in Appendix \ref{AppendixB}.


\begin{remark}
It is worth noticing that the steady states $\mu_\infty$ described in Theorems \ref{thm2}-\ref{thm1} 
are the unique possible fixed points of $Q^+$. See Theorems 2.1 and 2.2 in \cite{alsmeyerbiggins}. 
Necessary  conditions for the convergence of $\mu_t$ to a steady state $\mu_\infty$ 
are investigated in \cite{PerversiRegazzini}.
\end{remark}

\section{Rates of convergence in Wasserstein distances}\label{sct.results}
The {\it minimal $L_p$-metric} $-$ or {\it Kantorovich-Wasserstein distance} of order $p$
$-$
($p>0$)
between two probability measures $\mu_1$ and $\mu_2$ on $\CB(\R)$ is defined by
\begin{equation}\label{eq.wasserstein}
   \W_p(\mu_1,\mu_2):=\inf_{ m \in \CM(\mu_1,\mu_2)} \Big (\int_{\RE^2} |x-y |^p m(dxdy) \Big) ^{1\wedge1/p},
\end{equation}
where $\CM(\mu_1,\mu_2)$ is the class of all the probability measures on $\CB(\RE^2)$
with marginals $\mu_1$ and $\mu_2$, that is the probability measures $m$
such that $m(\cdot \times \RE)=\mu_1(\cdot)$ and $m(\RE \times \cdot)=\mu_2(\cdot)$.
In general, the infimum in \eqref{eq.wasserstein} may be infinite;
a sufficient (but not necessary) condition for having finite distance between $\mu_1$ and $\mu_2$ is
that both $\int_\R |v|^p \mu_1(dv)<+\infty$ and $\int_\R |v|^p \mu_2(dv)<+\infty$. An important property of the Kantorovich-Wasserstein
distance is its close connection with weak convergence of probability measures;
namely, if $(\nu_t)_{t\geq0}$ is a family
of probability measures such that $\int_\R |v|^p \nu_t(dv)<+\infty$ for every $t\geq0$ and
 $\nu_\infty$ is a probability measure such that $\int_\R |v|^p \nu_\infty(dv)<+\infty$,
 then $\W_p(\nu_t,\nu_\infty) \to 0$, as
 $t \to +\infty$, if and only if
$\nu_t $ converges weakly to $\nu_\infty$ and
 \[
\int_\R |x|^p \nu_t(dx) \to \int_\R |x|^p \nu_\infty(dx)\quad\text{for $t\to\infty$}.
\]
See, e.g., Lemma 8.4.35 in \cite{RachevRuschendorf}. Recall also that $\W_p(\nu_t,\nu_\infty) \to 0$, as $t\to+\infty$,
yields the weak convergence of $\nu_t$ to $\nu_\infty$, even if
$\int_\R |v|^p \nu_t(dv)=+\infty$ for every $t\geq0$.

In the rest of the section we deal with the problem of providing an upper bound for
$\W_p(\mu_t,\mu_\infty)$ when $\mu_t$
is the solution of \eqref{eq.1} with initial condition $\bar \mu_0$
  and $\mu_\infty$ is the corresponding steady state.

When $\a\not=1,2$, taking advantage of a probabilistic representation of the solution
recalled in Section \ref{probrep},
it is relatively easy to get an upper bound for $\W_p(\mu_t,\mu_\infty)$ whenever $p \leq 2$. The reason of the
restriction to $p\leq 2$ is that in proving such kind of
estimates a key point is the employment of the von Bahr - Esseen inequality for sums of independent random variables  -- see \eqref{vonBahrEsseen} --,
which holds only if $p\leq 2$. In order to enunciate these rates of convergence we recall that the so-called \textit{spectral function},
introduced in \cite{CeGaBo}, is the function  $\vphi\colon (0,+\infty)\to \overline{\R}:=\R \cup \{-\infty,+\infty\}$ defined
by
\begin{equation}\label{defPhi}
\vphi(q):=\frac{\CS(q)}{q}.
\end{equation}

\begin{theorem}[\cite{BassLadMatth10}]
  \label{PropW-2}
Let the same assumptions of {\rm Theorem \ref{thm2}} be in force
for some $p$ with $1< \a<p\leq2$
  or $\a<p \leq 1$. If $\W_{p}(\bar \mu_0,\mu_\infty) <+\infty$, then
  \begin{equation*}
    \W_{p}(\mu_t,\mu_\infty) \leq A^{\frac{1}{p}\wedge 1} \W_{p}(\bar \mu_0,\mu_\infty) e^{-t|\varphi (p)| (p \wedge 1)} ,
  \end{equation*}
  with $A=1$ if $p\leq1$, or $A=2$ otherwise.
\end{theorem}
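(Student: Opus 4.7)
My plan is to combine the probabilistic representation of Section~\ref{probrep} with an optimal coupling and a von~Bahr--Esseen moment estimate. Because $\mu_\infty$ is a fixed point of $Q^+$, the same Wild-type identity that gives $\mu_t=\mathrm{law}(V_t)$ shows that, for any i.i.d.\ sequence $(Y_j)_{j\geq 1}$ of law $\mu_\infty$ independent of $(N_t)$ and of the weights,
\[
V_t^\infty := \sum_{j=1}^{N_t}\beta_{j,N_t}Y_j
\]
has law $\mu_\infty$ for every $t\geq 0$. I would then couple $(X_j,Y_j)_{j\geq 1}$ i.i.d.\ and optimally so that $\E|X_1-Y_1|^{p\vee 1}=\W_p(\bar\mu_0,\mu_\infty)^{p\vee 1}$, and reuse the same $N_t$ and the same weights in $V_t$ and $V_t^\infty$. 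Setting $D_j:=X_j-Y_j$, this gives
\[
\W_p(\mu_t,\mu_\infty)^{p\vee 1}\leq \E\bigl|V_t-V_t^\infty\bigr|^p=\E\Bigl|\sum_{j=1}^{N_t}\beta_{j,N_t}D_j\Bigr|^p.
\]

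Next I would estimate the right-hand side by first conditioning on $N_t$ and on the weights. When $1<p\leq 2$, both $X_1$ and $Y_1$ are centred --- the former by the hypothesis $\int v\,\bar\mu_0(dv)=0$, the latter because, by Theorem~\ref{thm2}, $\mu_\infty$ is a mixture of centred $\alpha$-stable laws with $\alpha>1$ --- hence so are the $D_j$, and the von~Bahr--Esseen inequality gives $\E\bigl|\sum a_j D_j\bigr|^p\leq 2\sum|a_j|^p\E|D_j|^p$. When $p\leq 1$ the elementary bound $|x+y|^p\leq|x|^p+|y|^p$, together with $\beta_{j,n}\geq 0$, yields the same conclusion with constant $1$. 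In either case,
\[
\E\bigl|V_t-V_t^\infty\bigr|^p\leq A\,\E|D_1|^p\,\E[T_{N_t}],\qquad T_n:=\sum_{j=1}^n\beta_{j,n}^p,
\]
with $A=2$ if $p>1$ and $A=1$ if $p\leq 1$, and $\E|D_1|^p=\W_p(\bar\mu_0,\mu_\infty)^{p\vee 1}$ by the optimality of the coupling.

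It then remains to compute $\E[T_{N_t}]$ explicitly. From the recursive definition of the weights, conditioning on the $\sigma$-algebra $\CF_n$ generated by the first $n$ steps gives
\[
\E[T_{n+1}\mid\CF_n]=T_n\Bigl(1+\frac{\CS(p)}{n}\Bigr),
\]
because, given $\CF_n$, $I_n$ is uniform on $\{1,\dots,n\}$ and the split at step $n$ replaces $\beta_{I_n,n}^p$ with $(L_n^p+R_n^p)\beta_{I_n,n}^p$; hence $\E[T_n]=\prod_{k=1}^{n-1}(1+\CS(p)/k)$. Inserting this into $\E[T_{N_t}]=e^{-t}\sum_{n\geq 1}(1-e^{-t})^{n-1}\E[T_n]$ and using the generalised binomial identity $\sum_{m\geq 0}\bigl[\prod_{k=1}^m(1+x/k)\bigr]u^m=(1-u)^{-1-x}$ with $x=\CS(p)<0$ and $u=1-e^{-t}$, one obtains $\E[T_{N_t}]=e^{-t}\cdot e^{t(1+\CS(p))}=e^{-t|\CS(p)|}$. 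Assembling the bounds and recalling $|\CS(p)|=p\,|\vphi(p)|$ yields $\W_p(\mu_t,\mu_\infty)^{p\vee 1}\leq A\,\W_p(\bar\mu_0,\mu_\infty)^{p\vee 1}e^{-tp|\vphi(p)|}$, which is the claimed estimate after taking $(1/(p\vee 1))$-th roots.

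The main obstacle I foresee is ensuring the centredness of the $D_j$ before applying von~Bahr--Esseen in the $1<p\leq 2$ case --- this is precisely the reason the argument does not extend past $p=2$, since beyond that moment-boundedness alone is not enough. Once centredness is in hand, the martingale-type recursion for $\E[T_n]$ and the generating-function identity are essentially mechanical and produce both the exponential rate and the explicit constants in the statement.
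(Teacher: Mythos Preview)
Your argument is correct and matches the approach used in the paper (and in the cited reference \cite{BassLadMatth10}): optimal i.i.d.\ coupling combined with \eqref{stationaryV}, the von~Bahr--Esseen inequality \eqref{vonBahrEsseen} (respectively subadditivity when $p\leq 1$), the closed form \eqref{momentiM} for $\E[T_n]$, and the generating-function identity \eqref{serie-gamma}; this is exactly the template the paper reproduces in detail in the proof of Theorem~\ref{PropW-3}. One small slip: where you first write ``$\E|X_1-Y_1|^{p\vee 1}=\W_p(\bar\mu_0,\mu_\infty)^{p\vee 1}$'' you mean $\E|X_1-Y_1|^{p}=\W_p(\bar\mu_0,\mu_\infty)^{p\vee 1}$, as you in fact use later.
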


\begin{remark} It is worth noticing
that, if $\alpha<2$ and $\cp_0+\cm_0>0$, the assumption $\W_{p}(\bar \mu_0,\mu_\infty)<+\infty$ is a
non-trivial requirement, since $\int_\R |x|^p\bar{\mu}_0(dx)=+\infty$ and $\int_\R |x|^p{\mu}_\infty(dx)=+\infty$
for every $p>\alpha$.
In {\rm Section \ref{finitenessdp}} we will give sufficient conditions for the finiteness of $\W_{p}(\bar \mu_0,\mu_\infty)$.
\end{remark}

Theorem \ref{PropW-2} does not cover the cases $\alpha=1$ and $\alpha=2$ and the cases $\a\in(0,1)$ and $p>1$ or $\a\in(1,2)$ and $p>2$. In the next sections we will plug this gap.

\subsection{Statement of the main results for $\alpha <2$}\label{sec:mainresults}
In this section we will enunciate two results which provide (exponential) rates of
convergence to equilibrium for the solution of \eqref{eq.1} with respect to the Wasserstein distances of any order.
The proofs of these statements will be established by using the probabilistic representation of the solution of \eqref{eq.1} and employing an
inductive argument inspired by a technique developed in \cite{FillJans01}. This inductive argument makes use of rates of convergence to equilibrium with
respect to Wasserstein distances of order $p\leq2$; thus, it is crucial to have estimates for $d_p(\mu_t,\mu_\infty)$ when $p\leq2$.
Theorem \ref{PropW-2} fulfills our need if $\alpha\neq1$, while, when $\alpha=1$, we have to prove an estimate that will make us able to proceed with the next inductive argument.
This key step is provided by the following theorem.

\begin{theorem}\label{PropW-3}
  Assume that $(H_0)$ holds true with $\a=1$ and $1<p\leq2$, and that
  $\bar \mu_0$ satisfies the assumptions of {\rm Theorem \ref{thm1}}.
  If $\W_{p}(\bar \mu_0,\mu_\infty) <+\infty$, then
  \begin{equation}
    \label{main_bound}
    \W_{p}(\mu_t,\mu_\infty) \leq C_p e^{-t|\vphi(p)|} ,
  \end{equation}
for a suitable constant $C_p=C_p(\bar \mu_0)<+\infty$.
\end{theorem}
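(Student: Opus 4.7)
The plan is to construct an explicit coupling of $\mu_t$ and $\mu_\infty$ along the Wild representation of Section \ref{probrep} and to bound the coupled difference through a conditional version of the von Bahr--Esseen inequality, following the blueprint of Theorem \ref{PropW-2}; the new feature required by the case $\alpha=1$ is a nontrivial centering identity that replaces the assumed vanishing mean used when $\alpha>1$. On the probability space carrying $(N_t)_{t\ge 0}$, the weights $(\beta_{j,n})$ and the sequences $(L_n,R_n)$, $(I_n)$, enlarge the space by an independent i.i.d.\ sequence $(X_j,X_j^*)_{j\ge 1}$ drawn from the monotone (hence optimal for $\W_p$) coupling of $(\bar\mu_0,\mu_\infty)$. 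Set $V_t:=\sum_{j=1}^{N_t}\beta_{j,N_t}X_j$ and $V_t^*:=\sum_{j=1}^{N_t}\beta_{j,N_t}X_j^*$, so that $V_t\sim\mu_t$. A preliminary step is to show that $V_t^*\sim\mu_\infty$ for every $t\ge 0$: using \eqref{characteristic2} together with i.i.d.\ copies $(M_{\infty,j}^{(1)})$ of $M_\infty^{(1)}$ independent of the weights, one obtains
\[
\widehat{V_t^*}(\xi)=\E\Big[\exp\Big\{(i\gamma_0\xi-c_0\pi|\xi|)\sum_{j=1}^{N_t}\beta_{j,N_t}M_{\infty,j}^{(1)}\Big\}\Big],
\]
and iterating the fixed-point equation \eqref{eq.3} for $\alpha=1$ along the recursive construction of $(\beta_{j,n})$ gives $\sum_{j=1}^n\beta_{j,n}M_{\infty,j}^{(1)}\stackrel{d}{=}M_\infty^{(1)}$ for every $n$, whence $\widehat{V_t^*}\equiv\hat\mu_\infty$.

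Writing $D_j:=X_j-X_j^*$, the $D_j$ are i.i.d., independent of $(\beta_{j,N_t},N_t)$, with $\E|D_1|^p<\infty$. The heart of the proof is the centering identity $c:=\E[D_1]=0$, which is delicate since neither $\bar\mu_0$ nor $\mu_\infty$ has a finite first moment. For the monotone coupling one has $c=\int_0^1\bigl(F_0^{-1}(u)-F_\infty^{-1}(u)\bigr)\,du=\int_\R\bigl(F_\infty(x)-F_0(x)\bigr)\,dx$, the last integral being absolutely convergent because the leading $c_0/|x|$ tails of $F_0$ (from \eqref{NDA-1sym}) and of $F_\infty$ (computed from \eqref{characteristic2} using $\E[M_\infty^{(1)}]=1$) cancel. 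An integration by parts on $[-R,R]$ yields
\[
\int_{-R}^R(F_0-F_\infty)(x)\,dx=R\bigl(F_0(R)-F_\infty(R)\bigr)+R\bigl(F_0(-R)-F_\infty(-R)\bigr)-\int_{-R}^R x\,d(F_0-F_\infty)(x);
\]
the boundary terms vanish as $R\to\infty$ by the tail cancellation, while assumption \eqref{gamma} combined with its analog for $\mu_\infty$ gives $\int_{-R}^R x\,d(F_0-F_\infty)(x)\to\gamma_0-\gamma_0=0$, forcing $c=0$.

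Once $c=0$ is established, the conditional von Bahr--Esseen inequality produces $\E[|V_t-V_t^*|^p\mid\beta,N_t]\le 2\sum_{j=1}^{N_t}\beta_{j,N_t}^p\,\E|D_1|^p$, so it remains to estimate $a_n:=\E[\sum_{j=1}^n\beta_{j,n}^p]$. The recursive definition of $(\beta_{j,n+1})$ together with the uniform law of $I_n$ yields $a_{n+1}=a_n(1+\QQ(p)/n)$ with $a_1=1$; since $\QQ(p)=p\vphi(p)<0$, this gives $a_n\le Cn^{\QQ(p)}$. Averaging against the geometric law $\P\{N_t=n\}=e^{-t}(1-e^{-t})^{n-1}$ produces $\E[a_{N_t}]\le C'e^{-tp|\vphi(p)|}$, whence $\W_p^p(\mu_t,\mu_\infty)\le\E|V_t-V_t^*|^p\le C''_{\bar\mu_0,p}\,e^{-tp|\vphi(p)|}$, and taking $p$-th roots gives the claim. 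The main obstacle is the verification of $c=0$: it is the only place where assumption \eqref{gamma} and the explicit form \eqref{characteristic2} of $\mu_\infty$ enter essentially, and it is precisely this step that distinguishes $\alpha=1$ from the cases already treated in Theorem \ref{PropW-2}.
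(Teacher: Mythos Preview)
Your proposal is correct and shares the paper's overall architecture: couple $\mu_t$ and $\mu_\infty$ via the Wild representation and an optimal coupling of $(\bar\mu_0,\mu_\infty)$, establish that the coupled difference $D_1$ is centered, apply the von Bahr--Esseen inequality conditionally on the weights, and finish with the moment estimate for $\sum_j\beta_{j,n}^p$. The only substantive divergence is in the proof of the centering identity $\E[D_1]=0$. The paper (Lemma~\ref{lemma3}) first translates to $\gamma_0=0$ via Lemma~\ref{distfinita1}, so that the shifted steady state $\mu_\infty^*$ is symmetric; the coupling is then built from the optimal coupling of $(\bar\mu_0^*,\mu_\infty^*)$ together with the factorization $V_\infty^*=S_1^*M_\infty^{(1)*}$. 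Symmetry kills the $(F_\infty^*)^{-1}$ contribution, and the remaining term $\lim_n\int_{\varepsilon_n}^{1-\varepsilon_n}(F_0^*)^{-1}(u)\,du$ is shown to vanish by a rather lengthy direct estimate using only \eqref{NDA-1sym} and \eqref{gamma} for $F_0^*$. Your route is more symmetric between $F_0$ and $F_\infty$: you keep the original measures and reduce $c=0$ to the assertion that $F_\infty$ itself satisfies the analogs of \eqref{NDA-1sym} and \eqref{gamma} with the \emph{same} constants $c_0$ and $\gamma_0$, both of which follow from \eqref{misturaVA2} and $\E[M_\infty^{(1)}]=1$ by dominated convergence. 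Your argument is shorter but relies on properties of $F_\infty$ that you only assert; the paper's is longer but self-contained in that it only invokes the assumed properties of $F_0$.

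One small correction: your stated reason for the absolute convergence of $\int_\R(F_\infty-F_0)\,dx$---cancellation of the leading $c_0/|x|$ tails---is insufficient, since an $o(1/|x|)$ remainder need not be integrable. The correct justification is that $d_p(\bar\mu_0,\mu_\infty)<\infty$ with $p>1$ forces $\E|D_1|\le(\E|D_1|^p)^{1/p}<\infty$, and since the monotone coupling is simultaneously optimal for all orders in one dimension this gives $d_1(\bar\mu_0,\mu_\infty)=\int_\R|F_0-F_\infty|\,dx<\infty$.
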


Note that if $\int_\R |v|\bar \mu_0(dv)<+\infty$, then $c_0=0$, $\gamma_0=\int_\R v\bar \mu_0(dv)$ and
$\mu_\infty(\cdot)=\nu_1(\cdot/\gamma_0)$. By Proposition \ref{Prop0-1} (ii), since $\CS(p)<0$, we know that $\int_{\R^+}v^p\nu_1(dv)<+\infty$ and hence $\int_{\R}|v|^p\mu_\infty(dv)<+\infty$. Thus, $d_p(\bar{\mu}_0,\mu_\infty)<+\infty$ if and only if $\int_\R |v|^p \bar{ \mu}_0(dv)<+\infty$
and Theorem \ref{PropW-3} reduces to Theorem 5 of \cite{BassLadMatth10}. Analogously, if $\bar \mu_0$ is symmetric
and satisfies \eqref{NDA-1sym}, then the previous theorem reduces to Theorem 2.4 in \cite{BaLa}.

In order to introduce the generalizations of Theorems \ref{PropW-2} and \ref{PropW-3} to Kantorovich-Wasserstein metrics
of higher order, we define, for $i=1,2$ and every $q\geq i$,
\[
K_i(q):= \max\{\vphi(i),\vphi(q)\}.
\]
We are now in the position to enounciate the aforementioned exponential rates of convergence, which are divided into two different theorems according to the value of $\alpha$.

\begin{theorem}[$0<\alpha<1$]\label{alfa<1}
 Assume that $(H_0)$ holds true with $0 < \a <1$ and  $p> 1$. Assume also that
  $\bar \mu_0$ satisfies the hypotheses of {\rm Theorem \ref{thm2}} and that $d_p(\bar{\mu}_0,\mu_\infty)<+\infty$. Then there exists a constant
$C_p=C_p(\bar \mu_0)<+\infty$ such that
\begin{equation}\label{ratea<1}
d_p(\mu_t,\mu_\infty)\leq \left\{
													\begin{array}{ll}
													C_p e^{-t|K_1(p)|} & \text{if $\vphi(p)\neq\vphi(1)$}\\
													C_p t e^{-t|K_1(p)|} & \text{if $\vphi(p)=\vphi(1)$}
													\end{array}
													\right.
\end{equation}
for every $t \geq 0$. 
\end{theorem}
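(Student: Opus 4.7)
The plan is to extend the moment estimate of Theorem \ref{PropW-2} (which covers $1 < p \leq 2$) to every $p>1$, using the probabilistic representation of $V_t$ recalled in Section \ref{probrep} together with an induction on $\lceil p\rceil$ inspired by \cite{FillJans01}. First I would build a coupling of $V_t\sim\mu_t$ and $\tilde V_\infty\sim\mu_\infty$ on a common probability space: fix iid pairs $(X_j,Z_j)_{j\geq 1}$ realising an optimal transport plan with $\E|X_j-Z_j|^p=d_p^p(\bar\mu_0,\mu_\infty)<+\infty$, and set $V_t=\sum_{j=1}^{N_t}\beta_{j,N_t}X_j$ and $\tilde V_\infty=\sum_{j=1}^{N_t}\beta_{j,N_t}Z_j$. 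The Wild representation gives $V_t\sim\mu_t$, while iteration of the fixed-point identity $\mu_\infty=Q^+(\mu_\infty,\mu_\infty)$ shows $\sum_{j=1}^n\beta_{j,n}Z_j\sim\mu_\infty$ for every $n$, so $\tilde V_\infty\sim\mu_\infty$. Setting $U_j:=X_j-Z_j$, these are iid with $\E|U_j|^p<+\infty$ and
\begin{equation*}
d_p^p(\mu_t,\mu_\infty)\;\leq\;\E\Bigl|\sum_{j=1}^{N_t}\beta_{j,N_t}U_j\Bigr|^p.
\end{equation*}

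Since $p>1$, the mean $m:=\E U_1$ is finite --- a non-trivial observation when $\alpha<1$, because $X_j$ and $Z_j$ individually have infinite first moment. The decomposition $U_j=(U_j-m)+m$ splits the sum into a centred part $\sum_j\beta_{j,N_t}(U_j-m)$ and a mean part $m\sum_j\beta_{j,N_t}$, and Minkowski reduces matters to controlling the $L^p$-norm of each piece separately. Conditionally on the tree $(\beta_{j,n})$ and on $N_t$, the summands $U_j-m$ are independent and centred, so the centred part is bounded by the von Bahr--Esseen inequality when $p\leq 2$ (as in Theorem \ref{PropW-2}) and by a Rosenthal-type inequality when $p>2$. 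This produces terms involving $\E\sum_j\beta_{j,N_t}^p$ and, for $p>2$, the higher-order weight moment $\E\bigl(\sum_j\beta_{j,N_t}^2\bigr)^{p/2}$, each multiplied by lower moments of $U_1-m$. The mean contribution is simply $|m|^p\,\E\bigl(\sum_j\beta_{j,N_t}\bigr)^p$.

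The moments $a_n(q):=\E\sum_j\beta_{j,n}^q$ obey the explicit cascade recursion $a_{n+1}(q)=(1+\CS(q)/n)\,a_n(q)$, hence $a_n(q)=\prod_{k=1}^{n-1}(1+\CS(q)/k)\sim c\,n^{\CS(q)}$. Averaging against the geometric law of $N_t$, whose Laplace transform is explicit, converts this polynomial decay into the exponential rate $e^{-t\,q|\vphi(q)|}$. Applied to $q=p$ it bounds $\E\sum_j\beta_{j,N_t}^p$; the corresponding analysis for $\E\bigl(\sum_j\beta_{j,N_t}\bigr)^p$ gives rate $e^{-tp|\vphi(1)|}$ for the mean contribution, and an analogous but more delicate argument (H\"older interpolation against $a_n(p)$) bounds $\E(\sum_j\beta_{j,N_t}^2)^{p/2}$ by the worse of the two. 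Taking the $p$-th root and retaining the slower of the two exponential rates gives
\[
d_p(\mu_t,\mu_\infty)\;\leq\;C_p\,e^{-t\min\{|\vphi(1)|,|\vphi(p)|\}}\;=\;C_p\,e^{-t|K_1(p)|},
\]
while the degenerate case $\vphi(1)=\vphi(p)$ produces the extra factor $t$ through the standard double-root phenomenon in a linear recursion with coincident eigenvalues.

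The main obstacle is the control of the Rosenthal remainder $\E(\sum_j\beta_{j,N_t}^2)^{p/2}$ when $p>2$: unlike $\E\sum_j\beta_{j,N_t}^p$, this quantity does not satisfy a direct cascade recursion, and must be bootstrapped from lower-order moment bounds --- this is exactly the point where the inductive scheme of \cite{FillJans01} on $\lceil p\rceil$ is essential. A secondary difficulty is to carry the $|\vphi(1)|$ rate uniformly through the induction and check that the final constant $C_p$ is indeed finite, depending on $\bar\mu_0$ only through $d_p(\bar\mu_0,\mu_\infty)$ and $|m|=|\E(X_1-Z_1)|$.
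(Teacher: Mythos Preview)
Your approach differs substantially from the paper's. The paper does \emph{not} split $U_j=(U_j-m)+m$ and never invokes Rosenthal. Instead it works directly with the binary recursion \eqref{wildconvWn}: choosing optimal couplings $(W'_k,V'_k)$, $(W''_k,V''_k)$ for $d_s(\mu_{k-1},\mu_\infty)$ (simultaneously for all $s\ge 1$, via quantile coupling) and applying the elementary inequality $(a+b)^q\le a^q+b^q+c_q(a^{q-1}b+ab^{q-1})$ to $|L(W'_{I_n}-V'_{I_n})+R(W''_{n+1-I_n}-V''_{n+1-I_n})|^q$ yields, after a Gronwall step, the integral inequality
\[
\sigma_t(q)\le d_q^q(\bar\mu_0,\mu_\infty)\,e^{t\CS(q)}+B_q\,e^{t\CS(q)}\int_0^t e^{-\tau\CS(q)}\sigma_\tau(1)\,\sigma_\tau(q-1)\,d\tau\qquad(q\ge 2)
\]
for $\sigma_t(q):=\sum_{n\ge 1}(1-e^{-t})^{n-1}d_q^q(\mu_{n-1},\mu_\infty)$ (with $\sigma_\tau(1)^q$ replacing $\sigma_\tau(1)\sigma_\tau(q-1)$ when $1<q<2$). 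Starting from $\sigma_t(1)\le d_1(\bar\mu_0,\mu_\infty)e^{t\CS(1)}$, one climbs from $q=1+\veps_p$ to $q=p$ in integer steps; the competition between $e^{t\CS(q)}$ and the accumulated $e^{t\CS(1)}$ factors is precisely what produces $K_1(p)$, and the extra factor $t$ appears when the two exponents coincide.

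The concrete gap in your scheme is that neither $\E\bigl(\sum_j\beta_{j,N_t}\bigr)^p$ nor $\E\bigl(\sum_j\beta_{j,N_t}^2\bigr)^{p/2}$ is governed by the product formula you quote for $a_n(q)$: that formula gives only the \emph{first} moment of $\sum_j\beta_{j,n}^q$, not its higher powers. The quantities $M_n:=\sum_j\beta_{j,n}$ and $S_n:=\sum_j\beta_{j,n}^2$ themselves satisfy $M_{n+1}\stackrel{d}{=}LM'_{I_n}+RM''_{n+1-I_n}$ and $S_{n+1}\stackrel{d}{=}L^2S'_{I_n}+R^2S''_{n+1-I_n}$, so bounding $\E M_n^p$ and $\E S_n^{p/2}$ is a problem of exactly the same shape as the original one; the natural way to control either is the very branching/Gronwall argument the paper applies to $d_q^q(\mu_{n-1},\mu_\infty)$ directly. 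In particular your claim that the mean term delivers the clean rate $e^{-tp|\vphi(1)|}$ is not justified by the cascade for $a_n(1)$ alone. Your Rosenthal detour therefore pushes the difficulty into two side problems that end up requiring the paper's method anyway; the paper's route is more economical because it applies that method once, to the object one actually wants.
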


\begin{theorem}[$1 \leq\alpha<2$]\label{alfa1}
 Assume that $(H_0)$ holds true with $1 \leq \a <2$ and $p>2$. If $\alpha=1$ suppose that
  $\bar \mu_0$ satisfies the hypotheses of {\rm Theorem \ref{thm1}},
while if $1<\a<2$ assume that  $\bar \mu_0$ satisfies the hypotheses of
{\rm Theorem \ref{thm2}}.  Assume also that $d_p(\bar{\mu}_0,\mu_\infty)<+\infty$. Then there exists a  constant
$C_p=C_p(\bar \mu_0)<+\infty$ such that
\begin{equation}\label{rate1}
d_p(\mu_t,\mu_\infty)\leq \left\{
													\begin{array}{ll}
													C_p e^{-t|K_2(p)|} & \text{if $\vphi(p)\neq\vphi(2)$}\\
													C_p te^{-t|K_2(p)|} & \text{if $\vphi(p)=\vphi(2)$}
													\end{array}
													\right.
\end{equation}
for every $t \geq 0$. 
\end{theorem}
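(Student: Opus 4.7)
My plan is to combine the probabilistic representation of $\mu_t$ from Section \ref{probrep} with a Rosenthal-type moment analysis, in the spirit of the Fill--Janson contraction scheme mentioned by the authors. The $d_2$-estimates supplied by Theorem \ref{PropW-2} (for $1<\a<2$) and Theorem \ref{PropW-3} (for $\a=1$) will serve as the base of the induction.

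First I set up the coupling. Let $(X_j, Y_j)_{j\ge 1}$ be i.i.d. with $X_j \sim \bar\mu_0$, $Y_j \sim \mu_\infty$, realizing the $d_p$-optimal coupling, so that $\E|X_j - Y_j|^p = d_p(\bar\mu_0,\mu_\infty)^p <+\infty$. Since $\mu_\infty$ is a fixed point of $Q^+$, iterating the smoothing transformation along the tree yields that $V_t^\infty:=\sum_{j=1}^{N_t}\beta_{j,N_t}Y_j$ has law $\mu_\infty$ for any realization of the weight array. Hence
\[
d_p(\mu_t,\mu_\infty)^p \le \E\Big|\sum_{j=1}^{N_t}\beta_{j,N_t} Z_j\Big|^p, \qquad Z_j:=X_j-Y_j,
\]
with the $Z_j$ i.i.d. and independent of the tree. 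For $1<\a<2$ both marginals are centered so $\E Z_j=0$ automatically; for $\a=1$ neither has a mean, but the $\gamma_0$-assumption of Theorem \ref{thm1} together with the translation invariance of $d_p$ allows me to shift $\bar\mu_0$ and $\mu_\infty$ simultaneously by the same constant, reducing to a centered-increment situation. In either case the $Z_j$ are i.i.d. centered with $\E|Z|^p<+\infty$ (and in particular with finite second moment).

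Next I conditionally apply Rosenthal's inequality (valid for $p\ge 2$ and i.i.d. centered summands) to the weighted sum, obtaining
\[
e_n^{(p)} := \E\Big|\sum_{j=1}^n \beta_{j,n} Z_j\Big|^p \le C_p \Big\{\sigma^p\,\E\bigl[(M_n^{(2)})^{p/2}\bigr] + \E|Z|^p\,\E[M_n^{(p)}]\Big\},
\]
where $M_n^{(s)}:=\sum_j \beta_{j,n}^s$ and $\sigma^2=\E Z^2$. The second factor is explicit via the tree recursion: $\E M_n^{(p)} = \prod_{k=1}^{n-1}(1+\CS(p)/k) \le C n^{\CS(p)}$. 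The delicate quantity is $\E(M_n^{(2)})^{p/2}$, where Jensen goes the wrong way and crude power-mean bounds introduce spurious factors $n^{p/2-1}$. I would estimate it by induction on $n$ using the recursion $M_{n+1}^{(2)} = M_n^{(2)} + (L_n^2+R_n^2-1)\beta_{I_n,n}^2$, together with a Taylor expansion of $x\mapsto x^{p/2}$ around $M_n^{(2)}$ and the identity $\E[\beta_{I_n,n}^2\mid \mathcal F_n]= M_n^{(2)}/n$, yielding
\[
\E\bigl[(M_{n+1}^{(2)})^{p/2}\bigr] \le \Big(1 + \tfrac{p}{2}\cdot\tfrac{\CS(2)}{n} + O(n^{-2})\Big)\E\bigl[(M_n^{(2)})^{p/2}\bigr],
\]
so that $\E(M_n^{(2)})^{p/2}\le C n^{(p/2)\CS(2)}$. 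This is where the $d_2$-rates of Theorems \ref{PropW-2}--\ref{PropW-3} enter, to control the initial terms and seed the induction (in the $\a=1$ case via the $\gamma_0$-centering just described). Combining the two contributions,
\[
e_n^{(p)} \le C\bigl(n^{(p/2)\CS(2)} + n^{\CS(p)}\bigr) \le C' n^{-p|K_2(p)|},
\]
with an additional logarithmic correction precisely in the resonant case $\vphi(p)=\vphi(2)$.

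Finally I time-mix. By convexity of $d_p$ (valid for $p\ge 1$) applied to the Wild expansion $\mu_t=\sum_{n\ge 1}e^{-t}(1-e^{-t})^{n-1} q_{n-1}$,
\[
d_p(\mu_t,\mu_\infty) \le \sum_{n\ge 1} e^{-t}(1-e^{-t})^{n-1}\bigl(e_n^{(p)}\bigr)^{1/p},
\]
and a standard generating-function computation with $(e_n^{(p)})^{1/p}\lesssim n^{-|K_2(p)|}$ (using the fact that $|K_2(p)|<1$ under the standing assumptions) yields the claimed rate $C_p e^{-t|K_2(p)|}$; the extra $t$-factor in the resonant case comes from the logarithmic correction at the $n$-level via the $N_t\sim e^t$ correspondence. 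The main obstacle is the sharp higher-moment estimate $\E(M_n^{(2)})^{p/2}\le C n^{(p/2)\CS(2)}$ described above, and the secondary delicate point is the $\gamma_0$-centering reduction when $\a=1$, which is exactly what makes the $d_2$-bound from Theorem \ref{PropW-3} usable as the base of the induction.
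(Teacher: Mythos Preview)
Your approach via Rosenthal's inequality is genuinely different from the paper's, which never touches $\E[(M_n^{(2)})^{p/2}]$ at all: instead it works in continuous time with $\sigma_t(q)=\sum_n(1-e^{-t})^{n-1}d_q^q(\mu_{n-1},\mu_\infty)$, applies the elementary inequality $(a+b)^q\le a^q+b^q+c_q(a^{q-1}b+ab^{q-1})$ to the two-branch recursion \eqref{wildconvWn}, and obtains an integral inequality that closes via Gronwall and an induction in integer steps of $q$ seeded by $\sigma_t(2)$ from Theorems \ref{PropW-2}--\ref{PropW-3}. If your route worked it would be more direct, since Rosenthal treats every $p>2$ in one shot rather than climbing one integer at a time.

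The gap is the claimed remainder $O(n^{-2})$ in the recursion for $a_n:=\E[(M_n^{(2)})^{p/2}]$. Write $M_{n+1}^{(2)}=M_n^{(2)}+\Delta$ with $\Delta=(L_n^2+R_n^2-1)\beta_{I_n,n}^2$. The first-order Taylor term indeed produces $\tfrac{p}{2}\tfrac{\CS(2)}{n}a_n$, but for $p>2$ the map $x\mapsto x^{p/2}$ is convex, so this is a \emph{lower} bound; the Lagrange remainder is nonnegative and of order $\E[(M_n^{(2)})^{p/2-2}\Delta^2]$. Averaging over $I_n$ gives $\E[\Delta^2\mid\text{weights}]=\text{const}\cdot M_n^{(4)}/n$, so the correction is $\tfrac{C}{n}\E[(M_n^{(2)})^{p/2-2}M_n^{(4)}]$. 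The only general pointwise relation is $M_n^{(4)}\le (M_n^{(2)})^2$, which yields merely $\tfrac{C}{n}a_n$ with $C>0$; iterating then gives $a_n\le C' n^{(p/2)\CS(2)+C}$, spoiling the rate exactly in the regime $\vphi(p)<\vphi(2)$ where the $M_n^{(2)}$-term is dominant. The sharper estimate $M_n^{(4)}/(M_n^{(2)})^2=O(n^{-1})$ is false pathwise (consider realizations where a single $\beta_{j,n}$ dominates), and establishing a suitable averaged version is essentially as hard as the original problem. A smaller issue: for $\a=1$, shifting $\bar\mu_0$ and $\mu_\infty$ by the same constant leaves $Z_j=X_j-Y_j$ unchanged, so this cannot produce $\E Z_j=0$; that centering is a nontrivial fact (it is exactly Lemma \ref{lemma3}) relying on the comonotone structure of the optimal coupling and assumption \eqref{gamma}.
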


\begin{example}
Let us consider the case in which $L=1-R=U$ where $U$ is a random variable uniformly distibuted on $(0,1)$. In this special case $\CS(s)=\frac{1-s}{1+s}$ and $\vphi(s)=\frac{1-s}{s(1+s)}$. Since $0=\CS(1)>\CS(p)$ for every $p>1$, Theorem \ref{thm1} can be applied. In particular, using also Proposition \ref{Prop0-1} (i), we have that $\nu_1=\delta_1$ and $\mu_\infty$ is a Cauchy distribution of scale parameter $\pi c_0$ and position parameter $\gamma_0$. Noticing that $\vphi(2)=\vphi(3)=-1/6$, Lemma \ref{unicominimo} in Section \ref{section5} entails that Theorem \ref{alfa1} holds with
\[
K_2(p)=\left\{
				\begin{array}{ll}
				-1/6 & \text{if $2\leq p\leq 3$}\\
				(1-p)/(p+p^2) & \text{if $p>3$.}
				\end{array}
				\right.
\]
\end{example}

\begin{example}
Another interesting example is the case of the inelastic Kac equation \cite{PulvirentiToscani}. The inelastic Kac equation can be reduced to a special case of equation \eqref{eq.1}-\eqref{eq.2} with 
$L=|\cos(\tilde \theta)|^{1+d}$ and $R=|\sin(\tilde \theta)|^{1+d}$, $\tilde \theta$ being a random variable uniformly distributed on $(0,2\pi)$ and $d>0$.
In this case
\[
\begin{split}
 \CS(s)&=\frac{1}{2\pi}\int_{(0,2\pi)}(|\sin(\theta)|^{(1+d)s}+|\cos(\theta)|^{(1+d)s})d\theta-1\\
&= 
\frac{1}{\pi}\int_{(0,2\pi)}|\sin(\theta)|^{(1+d)s}d\theta-1=\frac{2}{\sqrt{\pi}} \frac{\Gamma(\frac{d+1}{2}s+\frac{1}{2})}{\Gamma(\frac{d+1}{2}s+1)}-1\\
\end{split}
\]
where $\Gamma(x)=\int_0^{+\infty} t^{x-1}e^{-t}dt$. Clearly $\CS(\alpha)=0$ for $\alpha=2/(d+1)$, moreover 
$\CS(p)<0$ for every $p>\alpha$, so that  Theorems \ref{thm2}-\ref{thm1} can be applied. As before,  $\nu_\a=\delta_1$ and $\mu_\infty$
is an $\a$-stable distribution. 
Since $\lim_{s \to +\infty}\CS(s)=-1$, then $\lim_{s \to +\infty}\varphi(s) = 0$  and,  invoking Lemmma  \ref{unicominimo},  
one proves  that $\varphi(s)$ has a unique minimum point in $p_{0}^{(d)}$.
Clearly $p_{0}^{(d)}=p_0^{(1)} 2/(d+1)$ where 
$p_0^{(1)}$ is the unique minimum point of 
\[
 s \to \frac{1}{s}\Big(\frac{2}{\sqrt{\pi}}\frac{\Gamma(s+\frac{1}{2})}{\Gamma(s+1)}-1\Big).
\]
Numerically one sees that $p^{(1)}_0 \approx 2.413$. On the one hand, it is easy to check that if $d\leq1$, i.e. $\a\geq1$, one has $p_{0}^{(d)}>2$. Hence, in this case, there exists a point $p^*_d>2$ such that $-K_2(p)=\vphi(2)$ if $2<p<p^*_d$ and $-K_2(p)=\vphi(p)$ if $p\geq p^*_d$. On the other hand, if $d>1$, i.e. $\a<1$, one has two different situations: (i) $p^{(d)}_0\leq1$ whenever 
$d\geq 2 p_0^{(1)}-1\approx 3.826$, thus $-K_1(p)=\vphi(p)$ for every $p\geq1$; (ii) $p^{(d)}_0>1$ whenever $d< 2 p_0^{(1)}-1\approx 3.826$, thus $-K_1(p)=\vphi(1)$ if $1<p<p^*_d$ and $-K_1(p)=\vphi(p)$ if $p\geq p^*_d$ for a suitable $p^*_d>1$.

\end{example}

\subsection{Asymptotic expansion for the tails of $\mu_\infty$ and sufficient conditions for the finiteness of $d_p(\bar{\mu}_0,\mu_\infty)$ when $\alpha<2$}\label{finitenessdp}
In the theorems of the previous subsection the constants $C_p$ -- which could be explicitly computed in the proofs of Theorems \ref{alfa<1} and \ref{alfa1} --
depend on $d_p(\bar{\mu}_0,\mu_\infty)$ and hence the assumption $d_p(\bar{\mu}_0,\mu_\infty)<+\infty$ is a fundamental requirement for \eqref{ratea<1} and \eqref{rate1} to be meaningful. In some particular cases this assumption reduces to a simpler hypothesis on the finiteness of the absolute $p$-th moment of the initial datum $\bar{\mu}_0$. 
More precisely, as already noted after Theorem \ref{PropW-3}, if $\alpha=1$ and $\int_\R |v|\bar{\mu}_0(dv)<+\infty$,
then 
$d_p(\bar{\mu}_0,\mu_\infty)<+\infty$ if and only if
$\int_\R |v|^p \bar{\mu}_0(dv)<+\infty$. Furthermore, if $\alpha\in(0,1)\cup(1,2)$ and $\cp_0+\cm_0=0$, then $\mu_\infty=\delta_0$,  and therefore $d_p(\mu_t,\mu_\infty)$ $-$ in
Theorems \ref{alfa<1} and \ref{alfa1} $-$ reduces to the absolute moment of  order $p$ of $\mu_t$. In particular,  $d_p(\bar{\mu}_0,\mu_\infty)<+\infty$ holds true if and only if $\int_\R |x|^p\bar{\mu}_0(dx)<+\infty$.
 All the other cases are more problematic. Indeed, as already recalled,
if $\alpha<2$ and $\cp_0+\cm_0>0$, then $\int_\R |x|^p\bar{\mu}_0(dx)=+\infty$ as well $\int_\R |x|^p{\mu}_\infty(dx)=+\infty$ for every $p>\alpha$.

Here we  give a  criterion that provides the finiteness of $d_p(\bar{\mu}_0,\mu_\infty)$ when $p >\alpha$.
The main result of this section is contained in Theorem \ref{teoremaDistFinita} which extends Lemma 1 of \cite{BassLadMatth10}.
 \newline
Let us start by noticing that \eqref{characteristic} can be immediately rewritten in terms of random variables
as follows: under the hypotheses of Proposition \ref{Prop0-1} and Theorem \ref{thm2}, let $\Minf$ be the unique solution of equation \eqref{eq.3}, consider  an $\alpha$-stable random variable $S_\alpha$
of parameters $(\lambda,\beta)$ given by \eqref{constant} and assume that
$\Minf$ and  $S_\alpha$ are stochastically independent. Finally, let $V_\infty$ be a random variable whose probability distribution is $\mu_\infty$. Then, \eqref{characteristic} becomes
\begin{equation}\label{misturaVA}
V_\infty\stackrel{d}{=}S_\alpha \Big(\Minf\Big)^\frac{1}{\alpha}.
\end{equation}
Note that, in the same way, \eqref{characteristic2} becomes
\begin{equation}\label{misturaVA2}
V_\infty\stackrel{d}{=}(S_{1}+\gamma_0)M^{(1)}_\infty=C_{\lm,\gamma_0}M^{(1)}_\infty,
\end{equation}
where $C_{\lm,\gamma_0}$ is a Cauchy random variable of scale parameter $\lm=\pi c_0$ and position parameter $\gamma_0$, and $S_1=C_{\lm,0}$.
In other words, for every $\alpha\in(0,2]$, $V_\infty$ is an $\alpha$-stable random variable randomly
rescaled by $\Big(\Minf\Big)^\frac{1}{\alpha}$.

It is useful to observe that, in order to obtain sufficient conditions for the finiteness of $d_p(\bar{\mu}_0,\mu_\infty)$, when $\alpha=1$
 we can suppose,  without loss of generality, that $\gamma_0=0$. This fact is justified by the next lemma.

\begin{lemma}\label{distfinita1} Let $(H_0)$ hold true with $\alpha=1$ and $p>1$.
Assume that $\bar \mu_0$ satisfies \eqref{NDA-1sym} and \eqref{gamma}, define
$\bar \mu_0^*(\cdot):=\bar \mu_0(\cdot+\gamma_0)$ and let $\mu_\infty^*$ be the corresponding steady state.
Then, $\lim_{R\to+\infty}\int_{(-R,R)}x \bar \mu^*_0(dx)=0$ and
\begin{equation}\label{characteristicmu*}
\int_\RE e^{i\xi v} \mu^*_\infty(dv)=\int_{\RE^+} e^{ -m c_0\pi|\xi|} \nu_1(dm).
\end{equation}
In addition, $ d_p(\bar \mu_0,\mu_\infty)<+\infty$ if and only if $d_p(\bar\mu_0^*,\mu_\infty^*)<+\infty$.
\end{lemma}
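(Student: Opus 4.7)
The plan is to handle the three assertions in order: verify that $\bar \mu_0^*$ satisfies the hypotheses of Theorem \ref{thm1} (with centering constant $0$ in place of $\gamma_0$); deduce \eqref{characteristicmu*} as a direct application of that theorem; then bridge the two Wasserstein distances by a triangle inequality, using the fact that both $\bar \mu_0$ and $\mu_\infty$ differ from their starred counterparts by a ``translation'' whose $d_p$-cost can be explicitly bounded.

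For the tail condition, $F_0^*(x)=F_0(x+\gamma_0)$ gives
\[
x(1-F_0^*(x)) = \frac{x}{x+\gamma_0}\cdot(x+\gamma_0)(1-F_0(x+\gamma_0)) \longrightarrow c_0,
\]
and symmetrically for the left tail, so \eqref{NDA-1sym} holds for $\bar \mu_0^*$ with the same $c_0$. For the centering, the change of variables $y=x+\gamma_0$ yields
\[
\int_{(-R,R)} x\,\bar \mu_0^*(dx) = \int_{(-R+\gamma_0,R+\gamma_0)} y\, dF_0(y) - \gamma_0\bigl[F_0(R+\gamma_0)-F_0(-R+\gamma_0)\bigr],
\]
whose second summand tends to $-\gamma_0$. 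The difference between $\int_{(-R+\gamma_0,R+\gamma_0)} y\, dF_0(y)$ and $\int_{(-R,R)} y\, dF_0(y)$ consists of two boundary strips, each bounded in absolute value by $(R+|\gamma_0|)\bigl|F_0(R+\gamma_0)-F_0(R)\bigr|$ or its left analogue. The tail asymptotics give $1-F_0(x)=c_0/x+o(1/x)$, so these strips are $o(1)$, and the first summand tends to $\gamma_0$ by \eqref{gamma}. Consequently $\lim_R \int_{(-R,R)} x\,\bar \mu_0^*(dx) = 0$. Applying Theorem \ref{thm1} to $\bar \mu_0^*$ with centering $0$ then yields \eqref{characteristicmu*}.

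For the equivalence, the key observation is that \eqref{misturaVA2}, realized with a common Cauchy variable $S_1$ (scale $\pi c_0$, position $0$) and a common $M^{(1)}_\infty$ for both steady states, provides a coupling $(V_\infty,V_\infty^*)$ satisfying $V_\infty-V_\infty^* = \gamma_0\, M^{(1)}_\infty$. Hence
\[
d_p(\mu_\infty,\mu_\infty^*) \leq |\gamma_0|\,\Bigl(\E\bigl[(M^{(1)}_\infty)^p\bigr]\Bigr)^{1/p},
\]
which is finite by Proposition \ref{Prop0-1}(ii), since $(H_0)$ gives $\CS(p)<0$. Similarly the translation coupling $(Y,Y-\gamma_0)$ with $Y\sim\bar \mu_0$ yields $d_p(\bar \mu_0,\bar \mu_0^*) \leq |\gamma_0|$. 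Because $p>1$, $d_p$ is a genuine metric, and the triangle inequality
\[
d_p(\bar \mu_0,\mu_\infty) \leq d_p(\bar \mu_0,\bar \mu_0^*) + d_p(\bar \mu_0^*,\mu_\infty^*) + d_p(\mu_\infty^*,\mu_\infty)
\]
together with its symmetric counterpart delivers the claimed equivalence. The only delicate step is the truncated first-moment convergence under shift, which crucially uses the precise tail decay from \eqref{NDA-1sym}; everything else reduces to invoking Theorem \ref{thm1}, Proposition \ref{Prop0-1}, and the triangle inequality for $d_p$.
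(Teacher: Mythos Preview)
Your proof is correct. For the first two assertions you supply more detail than the paper (which merely says ``it is clear''), but the content is the same. For the equivalence of the two Wasserstein distances, however, you take a genuinely different route: you invoke the triangle inequality for $d_p$ and bound the two ``correction'' distances $d_p(\bar\mu_0,\bar\mu_0^*)\leq|\gamma_0|$ and $d_p(\mu_\infty,\mu_\infty^*)\leq|\gamma_0|\,\|M_\infty^{(1)}\|_p$ via explicit couplings. The paper instead proves an auxiliary result (Lemma~\ref{lemmadp}) that allows it to realize the \emph{optimal} coupling for $d_p(\bar\mu_0,\mu_\infty)$ through the product structure $V_\infty=C_{\lambda,\gamma_0}M_\infty^{(1)}$, and then applies Minkowski's inequality inside a single $L^p$ norm to reach $d_p(\bar\mu_0^*,\mu_\infty^*)\leq d_p(\bar\mu_0,\mu_\infty)+|\gamma_0|\,\|1-M_\infty^{(1)}\|_p$. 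Your approach is more elementary and avoids Lemma~\ref{lemmadp} entirely; the paper's approach, on the other hand, yields a coupling that keeps the product representation visible, and this is precisely what it needs later in Lemma~\ref{lemma3} (where the optimal coupling is re-used to verify $\E(\bar X_0-\bar V_0)=0$). So your argument is cleaner for the lemma in isolation, while the paper's construction is tailored to downstream use.
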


Hence, in the rest of this section, we assume that $\gamma_0=0$ whenever $\a=1$.
Under this assumption, \eqref{misturaVA2} reduces to \eqref{misturaVA} and we can write
\begin{equation}\label{Finf-Falfa}
\begin{split}
F_\infty(x)& := \mu_{\infty}\Big((-\infty,x]\Big) =
\P\Big\{S_\alpha \Big(\Minf\Big)^\frac{1}{\alpha}\leq x\Big\}\\
& =\E\left[F_\alpha \left({x}{\Big(\Minf\Big)^{-\frac{1}{\alpha}}}\right)\I_{\{\Minf\neq0\}}+\I_{\{x\geq0\}}\I_{\{\Minf=0\}}\right] \\
\end{split}
\end{equation}
where $F_\alpha$ is the distribution function of $S_\alpha$.
At this stage we can derive a useful asymptotic expansion of $F_\infty$ combining  \eqref{Finf-Falfa} with the well-known asymptotic expansion for the probability
distribution function of a stable law.

\begin{proposition}\label{sviluppoFinf}
Let $0<\a<2$. If $\alpha \not =1$  let the same assumptions of {\rm Theorem \ref{thm2}} hold with $c_0^++c_0^->0$, while if $\alpha=1$
let the same hypotheses of {\rm Theorem \ref{thm1}} be in force with $\gamma_0=0$ and $c_0>0$.
Let $F_\infty$ be the distribution function of the steady state
$\mu_\infty$ described in {\rm Theorem \ref{thm2}}, {\rm Theorem \ref{thm1}} respectively.
Then
\begin{itemize}
\item[(i)] If $\alpha\neq1$, $|\beta|\neq1$ and  $\CS(\alpha(k+\delta))<0$ for some integer  $k \geq 1$ and some $\delta \in (0,1]$,
then $m_i:=\E[(\Minf)^i]<+\infty$  for $i=1,\dots,k$ and
\begin{eqnarray}
\label{codeFinfsx}
&&F_\infty(x)=\frac{\tcm_0}{|x|^{\alpha}}+\frac{\tcm_1}{|x|^{2\alpha}}+\dots+\frac{\tcm_{k-1}}{|x|^{k\alpha}}+O\Big(\frac{1}{|x|^{(k+\delta)\alpha}}\Big)\qquad\text{for $x\rightarrow-\infty$}\\
\label{codeFinfdx}
&&1-F_\infty(x)=\frac{\tcp_0}{x^{\alpha}}+\frac{\tcp_1}{x^{2\alpha}}+\dots+\frac{\tcp_{k-1}}{x^{k\alpha}}+O\Big(\frac{1}{x^{(k+\delta)\alpha}}\Big)\qquad\text{for $x\rightarrow+\infty$}
\end{eqnarray}
where $\tilde{c}^\pm_i:=c^\pm_i m_{i+1}$ for $i=0,\dots,k-1$, with $c^{\pm}_0$ being defined by \eqref{stabledomain} and $(c^\pm_i)_{1\leq i\leq k-1}$ suitable constants {\rm(}see \eqref{Ci} in Appendix A{\rm)}.
If $\alpha\neq1$ and $\beta=-1$ {\rm[}$\beta=1$, resp.{\rm]} and  $\CS(\alpha(k+\delta))<0$, then
\eqref{codeFinfsx}  holds and $1-F_\infty(x)=O\Big(\frac{1}{x^\eta}\Big)$ for $x\rightarrow+\infty$
{\rm[}\eqref{codeFinfdx}  holds and $F_\infty(x)=O\Big(\frac{1}{|x|^\eta}\Big)$ for $x\rightarrow-\infty$, resp.{\rm]}   for every $\eta>0$ such that $\CS(\eta)<0$.\newline
\item[(ii)] If $\alpha=1$ and  $\CS(2k-1+\delta)<0$ for some integer  $k \geq 1$ and $\delta \in (0,2]$,
 then $m_{2i+1}:=\E[(M_{\infty}^{(1)})^{2i+1}]<+\infty$  for $i=0,\dots,k-1$, $F_\infty$ is symmetric and
\begin{equation*}
F_\infty(x)=\sum_{i=0}^{k-1}\frac{\tilde{c}^-_i}{|x|^{2i+1}}+O\left(\frac{1}{|x|^{2k-1+\delta}}\right)\qquad\text{for $x\to-\infty$}.
\end{equation*}
with $\tilde{c}^-_i:=\frac{(-1)^i\lm^{2i+1}m_{2i+1}}{\pi(2i-1)}$ for $i=0,\dots,k-1$.
\end{itemize}
\end{proposition}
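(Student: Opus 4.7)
The plan is to start from the mixture representation \eqref{misturaVA} (or its specialization of \eqref{misturaVA2} in the case $\alpha=1,\gamma_0=0$), which reads $V_\infty\stackrel{d}{=}S_\alpha(M_\infty^{(\alpha)})^{1/\alpha}$ with $S_\alpha$ and $M:=M_\infty^{(\alpha)}$ stochastically independent. Conditioning on $M$ and using the independence gives, for $x>0$,
\[
1-F_\infty(x) \;=\; \E\!\left[(1-F_\alpha(xM^{-1/\alpha}))\,\I_{\{M>0\}}\right],
\]
with the analogous formula for the left tail. So the problem reduces to (a) inserting a sufficiently accurate asymptotic expansion of the stable cdf $F_\alpha$, (b) integrating term by term in $M$, and (c) controlling the remainder uniformly.

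Step (a) is supplied by the classical asymptotic expansion of $F_\alpha$ (for $\alpha\in(0,2)\setminus\{1\}$ with $|\beta|\neq 1$ one has $1-F_\alpha(y)=\sum_{i=0}^{k-1} c^+_i y^{-(i+1)\alpha} + R_k^+(y)$ with $|R_k^+(y)|\le C y^{-(k+\delta)\alpha}$ for $y\ge y_0$, and symmetrically for $F_\alpha(-y)$; for $\beta=\pm 1$ the ``unfavoured'' tail decays super-polynomially). The constants $c^\pm_0$ agree with those in \eqref{stabledomain}, and the $c^\pm_i$ for $i\ge 1$ are the coefficients that will be gathered in \eqref{Ci} of Appendix A. Step (b) yields formally
\[
1-F_\infty(x)=\sum_{i=0}^{k-1}\frac{c_i^+\,\E[M^{i+1}]}{x^{(i+1)\alpha}}+\E\!\left[R_k^+(xM^{-1/\alpha})\,\I_{\{M>0\}}\right],
\]
which identifies $\tilde c^+_i=c^+_i m_{i+1}$. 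The moments $m_1,\dots,m_k$ are finite by Proposition \ref{Prop0-1}(ii) applied with exponents $\alpha i\le \alpha k<\alpha(k+\delta)$, because $\CS(\alpha(k+\delta))<0$ and $\CS$ is convex with $\CS(\alpha)=0$, hence $\CS(\alpha i)<0$ for every $1\le i\le k$. For the symmetric Cauchy case ($\alpha=1$, $\gamma_0=0$) one does not need an external expansion at all: the Taylor series of $\arctan(1/y)$ furnishes the explicit coefficients, and the symmetry of $F_\infty$ is immediate from $C_{\lambda,0}\stackrel{d}{=}-C_{\lambda,0}$ and the independence of $M$, which also explains why only odd moments appear.

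The main obstacle is step (c), the uniform control of $\E[R_k^+(xM^{-1/\alpha})\I_{\{M>0\}}]$. To handle it I would split the expectation at the event $\{M\le (x/y_0)^\alpha\}$: on this set the argument $xM^{-1/\alpha}\ge y_0$ lies where the pointwise bound on $R_k^+$ applies, giving
\[
\E\!\left[|R_k^+(xM^{-1/\alpha})|\,\I_{\{0<M\le(x/y_0)^\alpha\}}\right]\le \frac{C\,\E[M^{k+\delta}]}{x^{(k+\delta)\alpha}},
\]
finite thanks to Proposition \ref{Prop0-1}(ii) applied at the exponent $\alpha(k+\delta)$. On the complement $\{M>(x/y_0)^\alpha\}$ the remainder is only bounded (by something depending on $1/y$ near $0$, but actually by $1$ after absorbing the explicit polynomial terms into a redefinition of $R_k^+$), and a Markov-type estimate $\P(M>(x/y_0)^\alpha)\le (y_0/x)^{\alpha(k+\delta)}\E[M^{k+\delta}]$ again produces an $O(x^{-(k+\delta)\alpha})$ contribution. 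Combining the two pieces yields the stated remainder.

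Finally, the one-sided case $\beta=\pm 1$ of part (i) is handled separately: on the ``fast'' side the stable law has all moments, so $\E[|S_\alpha|^\eta]<\infty$ for every $\eta>0$ on that side, and by Markov $\P(\pm V_\infty>x)\le x^{-\eta}\,\E[|S_\alpha|^\eta\,M^{\eta/\alpha}]$, which is finite precisely when $\E[M^{\eta/\alpha}]<\infty$, i.e.\ $\CS(\eta)<0$. The expansion on the opposite side is derived exactly as above. I expect the computations for the explicit form of the coefficients $c_i^\pm$ (to be recorded in Appendix A via the classical Bergström/Feller expansion) and the bookkeeping for the remainder on the small-$M$ side to be the only places where care is needed; everything else follows the scheme outlined.
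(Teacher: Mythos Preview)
Your proposal is correct and follows exactly the route taken in the paper: Appendix~A simply quotes the mixture formula \eqref{Finf-Falfa}, the classical stable-law tail expansion (recorded there as Proposition~\ref{sviluppoFalfaDiv1}) and the explicit Cauchy expansion \eqref{coda1}, and then states that combining these yields the result. Your outline is in fact more detailed than the paper's one-line ``combining \ldots\ we obtain'', since you supply the threshold splitting and Markov-type control of the remainder that the paper leaves implicit; the only cosmetic slip is writing ``$\CS(\alpha i)<0$ for every $1\le i\le k$'' (at $i=1$ one has $\CS(\alpha)=0$, but $m_1=1$ by normalization, so nothing is lost).
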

For the proof of this proposition the reader is deferred to Appendix A.

It is worth noticing that  $-$  with the exception of few cases, see e.g. \cite{BassettiToscani}  $-$ in general there is no analytical expression of the law of $\Minf$, i.e. $\nu_\a$.
Nevertheless, having an explicit expression of the mixed moment of $(L,R)$,
it is always possible to recursively determine the exact expression of the integer moments of $\nu_\alpha$, i.e.
$m_i:=\E[(\Minf)^i]$. Indeed, $m_1=1$ and, for $i=2,\dots,k$,
 \begin{equation*}
 m_i=\frac{1}{1-\E[L^{\alpha i}+R^{\alpha i}]}\sum_{j=1}^{i-1}\binom{i}{j}\E[L^{\alpha j}R^{\alpha(i-j)}]m_j m_{i-j}.
\end{equation*}
 This recursive formula can be easily obtained using \eqref{eq.3} and Newton binomial formula.
The next theorem provides the announced sufficient conditions on the initial datum $\bar{\mu}_0$ that ensure the
finiteness of $d_p(\bar{\mu}_0,\mu_\infty)$. Essentially, $d_p(\bar{\mu}_0,\mu_\infty)$ is finite whenever
the tails of $F_0$ are close enough to the tails of $F_\infty$. 

\begin{theorem}\label{teoremaDistFinita}
Let $0<\a<2$. If $\alpha \not =1$  let the same assumptions of {\rm Theorem \ref{thm2}} hold with $c_0^++c_0^->0$, while if $\alpha=1$
let the same hypotheses of {\rm Theorem \ref{thm1}} be in force with $\gamma_0=0$ and $c_0>0$. 
Let $p>\alpha$ and set  $k:=\left\lfloor 1+\frac{p-\alpha}{p\alpha}\right\rfloor$.
\begin{itemize}
\item[(i)] Let $|\beta|\neq1$. Assume that $\CS(s)<0$ for some $s>\alpha+(p-\alpha)/p$ and that $F_0$ satisfies
\begin{eqnarray}
\label{condDatoInizSx}
&& \Big|F_0(x)-\sum_{i=0}^{k-1}\frac{\tcm_i}{|x|^{(i+1)\alpha}}\Big|\leq \frac{\zeta(|x|)}{|x|^{(1+\frac{p-\alpha}{p\alpha})\alpha}}\quad\text{for $x\rightarrow-\infty$}\\
\label{condDatoInizDx}
&& \Big|1-F_0(x)-\sum_{i=0}^{k-1}\frac{\tcp_i}{|x|^{(i+1)\alpha}}\Big|\leq \frac{\zeta(x)}{|x|^{(1+\frac{p-\alpha}{p\alpha})\alpha}}\quad\text{for $x\rightarrow+\infty$}
\end{eqnarray}
where $\tcm_0,\tcp_0,\tcm_1,\tcp_1,\dots,\tcm_{k-1},\tcp_{k-1}$ are given in {\rm Proposition \ref{sviluppoFinf}} and \\ 
$\zeta\colon (0,+\infty)\rightarrow \R^+$ 
is a continuous, monotone decreasing function on $[B,+\infty)$ such that
\begin{equation}\label{zeta}
\int_B^{+\infty}\frac{\zeta^p(x)}{x}dx<+\infty
\end{equation}
for some $B>0$.
Then
\[
d_p(\bar{\mu}_0,\mu_\infty)<+\infty.
\]
\item [(ii)]
If $\alpha\neq1$, $\beta=-1$ {\rm[}$\beta=1$, resp.{\rm]}, suppose that \eqref{condDatoInizSx} {\rm[}$\eqref{condDatoInizDx}$, resp.{\rm]} holds true,
that $\int_{0}^{+\infty}|x|^p dF_0(x)<+\infty$ {\rm[}$\int_{-\infty}^0|x|^p dF_0(x)<+\infty$, resp.{\rm]} and $\CS(s)<0$ for some $s>\max(p,\alpha + (p-\alpha)/p)$. Then
\[
d_p(\bar{\mu}_0,\mu_\infty)<+\infty.
\]
\end{itemize}
\end{theorem}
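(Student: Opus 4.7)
The plan is to work with the monotone (quantile) coupling of $\bar\mu_0$ and $\mu_\infty$, which is optimal in one dimension, so that the finiteness of $d_p(\bar\mu_0,\mu_\infty)$ reduces to the finiteness of $I:=\int_0^1 |F_0^{-1}(u)-F_\infty^{-1}(u)|^p\,du$. Splitting $I=\int_0^\epsilon +\int_\epsilon^{1-\epsilon}+\int_{1-\epsilon}^1$, the middle piece is trivially finite since both quantile functions are bounded on $[\epsilon,1-\epsilon]$, so the task is to control the two tail pieces; I describe the left-tail one, the right-tail being entirely symmetric.

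First, pick $\delta\in(0,1]$ with $(k+\delta)\alpha>\alpha+(p-\alpha)/p$ and $\CS((k+\delta)\alpha)<0$. Such a $\delta$ exists by convexity of $\CS$ (with $\CS(\alpha)=0$) together with the hypothesis $\CS(s)<0$ for some $s>\alpha+(p-\alpha)/p$. Proposition \ref{sviluppoFinf}(i) then expands $F_\infty$ with a $k$-term polynomial part that matches, by \eqref{condDatoInizSx}, the one prescribed for $F_0$, so that
\[
|F_0(x)-F_\infty(x)|\leq C\,\frac{\zeta(|x|)}{|x|^{\alpha+(p-\alpha)/p}}\qquad(x\to -\infty).
\]
Second, invert this vertical estimate into a horizontal one. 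From \eqref{misturaVA} and conditioning on $\Minf$, $\mu_\infty$ has a smooth density $f_\infty$ with $f_\infty(x)\sim \alpha\tcm_0/|x|^{\alpha+1}$ as $x\to -\infty$, obtained from the leading tail of the stable density $f_\alpha$ averaged over $\Minf$ with $\E[\Minf]=1$. Writing $u=F_\infty(y_\infty)=F_0(y_0)$, the mean value theorem gives $|y_0-y_\infty|\leq |F_\infty(y_0)-F_0(y_0)|/f_\infty(\xi)$ for some $\xi$ between $y_0$ and $y_\infty$; combining with the tail asymptotic of $f_\infty$ and a short bootstrap (valid once $|y_\infty|$ is large) that replaces $\xi$ by $y_\infty$ up to constants, one gets
\[
|F_0^{-1}(u)-F_\infty^{-1}(u)|\leq C\,\zeta(|y_\infty|)\,|y_\infty|^{\alpha/p}.
\]

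Third, change variables $u=F_\infty(y)$, $du=f_\infty(y)\,dy$, to obtain
\[
\int_0^\epsilon |F_0^{-1}(u)-F_\infty^{-1}(u)|^p\,du \leq C\int_{-\infty}^{-B}\zeta(|y|)^p|y|^{\alpha}\cdot\frac{dy}{|y|^{\alpha+1}}=C\int_B^{+\infty}\frac{\zeta(y)^p}{y}\,dy,
\]
which is finite by \eqref{zeta}. The right tail is treated in exactly the same way using \eqref{condDatoInizDx} and $\tcp_0$, completing part (i).

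For part (ii), take $\beta=-1$ (the case $\beta=+1$ is symmetric). The hypothesis $\CS(s)<0$ for some $s>\max(p,\alpha+(p-\alpha)/p)$ combined with Proposition \ref{sviluppoFinf}(i) yields $1-F_\infty(x)=O(x^{-s})$ as $x\to +\infty$, so $\int_0^{+\infty}x^p\,d\mu_\infty(x)<+\infty$; together with the assumed $\int_0^{+\infty}x^p\,dF_0(x)<+\infty$ and the elementary inequality $|a-b|^p\leq 2^{p-1}(|a|^p+|b|^p)$ this gives $\int_{1/2}^1 |F_0^{-1}(u)-F_\infty^{-1}(u)|^p\,du<+\infty$, and the left tail is handled exactly as in part (i). The main obstacle is the rigorous asymptotic inversion in the second step, which requires controlling $f_\infty$ from below uniformly on the interval between $y_0$ and $y_\infty$; this is handled by a short bootstrap exploiting the fact that $f_\infty(y)|y|^{\alpha+1}$ tends to a positive constant as $|y|\to\infty$. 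The case $\alpha=1$ is not substantively different, since by Lemma \ref{distfinita1} one may assume $\gamma_0=0$, after which $\mu_\infty$ takes the same mixture form as in \eqref{misturaVA} and the whole argument carries over verbatim.
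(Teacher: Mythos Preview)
Your approach is correct and follows the same overall architecture as the paper's proof: optimal (quantile) coupling, reduction to the two tail integrals, asymptotic inversion of the distribution functions, and the final change of variables that produces the integral $\int \zeta^p(y)/y\,dy$. The difference lies in how the inversion step is executed. The paper introduces an explicit auxiliary distribution function $G$ built from the polynomial tail $\sum_{i=0}^{k-1}\tilde c_i^{\pm}/|x|^{(i+1)\alpha}$, bounds $d_p(\bar\mu_0,\mu_\infty)$ by the two pieces $\int|G^{-1}(F_0(y))-y|^p\,dF_0(y)$ and $\int|G^{-1}(F_\infty(y))-y|^p\,dF_\infty(y)$, and then Taylor-expands $G^{-1}$ to second order around $G(x)$; because $G$ is explicit, $G'$ and $G''$ are computed directly and the remainder point $R_x$ is controlled by sandwiching $G$ between two exact power laws $G_\pm$. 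You instead compare $F_0$ and $F_\infty$ directly and invoke the mean value theorem with the density $f_\infty$, relying on the asymptotic $f_\infty(x)\sim \alpha\tilde c_0^{-}/|x|^{\alpha+1}$.

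Your route is slightly more streamlined but carries two extra obligations that the paper's auxiliary-$G$ trick avoids: you must justify the pointwise density asymptotic for $f_\infty$ (this does follow from the mixture representation and dominated convergence, but it is an additional lemma), and your ``short bootstrap'' must be spelled out---one first needs the crude comparability $|y_0|\asymp|y_\infty|$ from the leading tails $F_0(x)\sim F_\infty(x)\sim \tilde c_0^{-}/|x|^\alpha$ before $f_\infty(\xi)$ can be bounded below on the interval between $y_0$ and $y_\infty$. The paper's device sidesteps both points at the cost of introducing $G$. Part~(ii) is handled the same way in both proofs.
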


\begin{remark}
A simple example of function $\zeta$ is $\zeta(x):=|x|^{-\veps}$ for some $\veps>0$, but
one can also take functions that decrease to infinity slower than a power, for instance
 $\zeta(x):={(\log x)^{-\frac{1+\veps}{p}}}$.

Note that  if  $p>\alpha\geq1$ then $1\leq 1+\frac{p-\alpha}{p\alpha}<2$.
Hence, in this case   $k=\lfloor 1+\frac{p-\alpha}{p\alpha}\rfloor=1$. This means that  \eqref{condDatoInizSx}-\eqref{condDatoInizDx} are
similar to the conditions that describe to so-called {\rm strong domain of attraction of an $\a$-stable law}, i.e.
\[
1-F_0(x)=\frac{c_0^+}{|x|^\alpha} + O\Big(\frac{1}{|x|^{\alpha+\delta}}\Big ),\qquad
F_0(x)=\frac{c_0^-}{|x|^\alpha} + O\Big(\frac{1}{|x|^{\alpha+\delta}}\Big),
\]
for $|x| \to \infty$ and for some $\delta>0$. See, for instance, \cite{Cramer}.
\end{remark}

\subsection{Some estimates for $\alpha =2$}\label{sec:mainresults2}
In this section we assume that $(H_0)$ holds true with $\a=2$ and we provide some estimates for the rate of convergence to equilibrium with respect to Wasserstein distances
of order $p>2$. To do so, we will employ the same inductive argument  on the order $p$  used
in the proof of Theorems \ref{alfa<1} and \ref{alfa1}. The first obstacle in this procedure is that, at the best of our knowledge, when $\a=2$, there is not a result comparable
to those of Theorems \ref{PropW-2} and \ref{PropW-3}. The only exception is for the Kac model; in this case rates of convergence both in $d_1$ and in $d_2$ are known \cite{GabettaRegazziniWM}. It would be useful to prove a result similar to Theorems \ref{PropW-2} and \ref{PropW-3} for $\a=2$ to get estimates for $d_p(\mu_t,\mu_\infty)$ $-$ with $1\leq p\leq2$ $-$ and use them as the first step of the inductive argument. The main problem is that we do not manage to give non trivial upper bounds for $d_p(\mu_t,\mu_\infty)$ with $1< p\leq2$. Indeed, the only explicit estimate that we are able to provide is given by
\begin{equation}\label{wass12}
d_p(\mu_t,\mu_\infty)\leq \Gamma_2
\end{equation}
for some positive constant $\Gamma_2$, for every $t\geq0$ and for every $1<p\leq2$. This  trivial inequality
follows since $d_p\leq d_2$ for every $1<p\leq2$ and $d_2(\mu_t,\mu_\infty)\to 0$ as $t\to+\infty$. The convergence to zero of
$d_2(\mu_t,\mu_\infty)$ is a consequence of the weak convergence of $\mu_t$ to $\mu_\infty$ supplemented by the fact that, when $\bar{\mu}_0$
satisfies the assumptions of Theorem \ref{thm2} (i.e. it has zero mean and finite variance), one has $\int_{\R}x^2\mu_t(dx)=\int_{\R}x^2\mu_\infty(dx)$ for every $t\geq0$.\newline
As for $d_1$, we obtain a non trivial bound passing through Fourier distances. Recall that for every $s>0$ the \textit{Fourier distance} $\chi_s$ (also known as weighted $\chi$-metric of order $s$) between two probability measures $\mu_1$ and $\mu_2$ on $\CB(\R)$ is defined as
\[
\chi_s(\mu_1,\mu_2):=\sup_{\xi\neq0}\frac{|\hat{\mu}_1(\xi)-\hat{\mu}_2(\xi)|}{|\xi|^s}
\]
where $\hat{\mu}_i(\xi)=\int_\R e^{i\xi x}\mu_i(dx)$ for every $\xi\in\R$ and $i=1,2$. These distances are very useful in order to easily obtain rates of convergence to equilibrium for every $\a\in(0,2]$. Indeed, one can plainly prove the following:

\begin{proposition}\label{propratechi}
Assume that $(H_0)$ holds true with $\alpha\in(0,2]$ and $p>\alpha$. If $\a\neq1$ suppose that $\bar{\mu}_0$ satisfies the hypotheses of {\rm Theorem \ref{thm2}}, while if $\a=1$ suppose that $\bar{\mu}_0$ satisfies the hypotheses of {\rm Theorem \ref{thm1}}. If $\chi_{p}(\bar{\mu}_0,\mu_\infty)<+\infty$, one has
\begin{equation*}
\chi_{p}(\mu_t,\mu_\infty)\leq \chi_{p}(\bar{\mu}_0,\mu_\infty)e^{t\CS(p)}.
\end{equation*}
\end{proposition}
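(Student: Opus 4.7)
The plan is to exploit the Fourier PDE directly at the level of characteristic functions. Setting $h_t(\xi):=\hat{\mu}_t(\xi)-\hat{\mu}_\infty(\xi)$, I will derive a first-order evolution equation for $h_t$ whose source term can be controlled by the scaled supremum defining $\chi_p$; an integrating factor together with Gronwall's lemma will then deliver the claimed exponential rate.

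The starting point is subtracting the fixed-point identity $\hat{\mu}_\infty(\xi)=\E[\hat{\mu}_\infty(L\xi)\hat{\mu}_\infty(R\xi)]$ (which follows from $\mu_\infty=Q^+(\mu_\infty,\mu_\infty)$) from the Fourier formulation \eqref{eq.boltzivp}. Using the telescoping identity $ab-cd=(a-c)b+c(b-d)$ with $a=\hat{\mu}_t(L\xi)$, $b=\hat{\mu}_t(R\xi)$, $c=\hat{\mu}_\infty(L\xi)$, $d=\hat{\mu}_\infty(R\xi)$, one obtains
\begin{equation*}
\partial_t h_t(\xi)+h_t(\xi)=\E\bigl[h_t(L\xi)\hat{\mu}_t(R\xi)+\hat{\mu}_\infty(L\xi)h_t(R\xi)\bigr].
\end{equation*}
Since $|\hat{\mu}_t|,|\hat{\mu}_\infty|\leq 1$ and $L,R\geq 0$ by $(H_0)$, the bound $|h_t(L\xi)|\leq \chi_p(\mu_t,\mu_\infty)\,L^p|\xi|^p$ (with the convention $h_t(0)=0$ and $0^p=0$) and its analogue for $R$ yield the pointwise estimate
\begin{equation*}
\left|\partial_t\frac{h_t(\xi)}{|\xi|^p}+\frac{h_t(\xi)}{|\xi|^p}\right|\leq \chi_p(\mu_t,\mu_\infty)\,\E[L^p+R^p]=\chi_p(\mu_t,\mu_\infty)\bigl(1+\CS(p)\bigr).
\end{equation*}

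Multiplying by the integrating factor $e^t$, integrating from $0$ to $t$, and then taking the supremum over $\xi\neq 0$ leads to
\begin{equation*}
e^t\chi_p(\mu_t,\mu_\infty)\leq \chi_p(\bar\mu_0,\mu_\infty)+\bigl(1+\CS(p)\bigr)\int_0^t e^s\chi_p(\mu_s,\mu_\infty)\,ds,
\end{equation*}
whence Gronwall's inequality gives $\chi_p(\mu_t,\mu_\infty)\leq \chi_p(\bar\mu_0,\mu_\infty)e^{\CS(p)t}$, which is the claim. The main technical obstacle is verifying that $\chi_p(\mu_s,\mu_\infty)$ is locally bounded in $s$ so that Gronwall applies; this I would handle via the Wild series \eqref{Wild1}, noting that if $a_n:=\sup_{\xi\neq0}|q_n(\xi)-\hat{\mu}_\infty(\xi)|/|\xi|^p$ then the recursion \eqref{Wild2} combined with the fixed-point identity for $\hat{\mu}_\infty$ produces $a_n\leq \frac{1+\CS(p)}{n}\sum_{j=0}^{n-1}a_j$, which grows at most polynomially in $n$ and hence renders $\chi_p(\mu_t,\mu_\infty)\leq\sum_{n\geq 0}e^{-t}(1-e^{-t})^n a_n$ finite for every $t\geq 0$. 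As a side remark, a generating-function computation applied to the same recursion already yields the exponential bound directly, furnishing an alternative proof that bypasses Gronwall altogether.
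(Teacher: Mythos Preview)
Your argument is correct and constitutes a genuinely different proof from the paper's. The paper proceeds via the Wild series and the probabilistic representation with weights $\beta_{j,n}$: it first uses convexity to write $\chi_p(\mu_t,\mu_\infty)\leq\sum_{n\geq 0}e^{-t}(1-e^{-t})^n\chi_p(\mu_n,\mu_\infty)$, then exploits the identity $\hat\mu_\infty(\xi)=\E\big[\prod_{j=1}^n\hat\mu_\infty(\beta_{j,n}\xi)\big]$ together with the elementary bound $|\prod z_j-\prod w_j|\leq\sum|z_j-w_j|$ to get $\chi_p(\mu_n,\mu_\infty)\leq\chi_p(\bar\mu_0,\mu_\infty)\,\E\big[\sum_{j=1}^n\beta_{j,n}^p\big]$; the explicit formula \eqref{momentiM} and the generating-function identity \eqref{serie-gamma} then close the argument. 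Your route instead works directly on the Fourier PDE: subtracting the fixed-point identity, telescoping the bilinear collision term, and applying Gronwall. This is the classical contraction-in-Fourier-metric technique for Boltzmann-type equations and is arguably more elementary, as it requires neither the explicit expression for $\E\big[\sum_j\beta_{j,n}^p\big]$ nor the probabilistic machinery of the weights. The paper's approach, on the other hand, is self-contained within its probabilistic framework and avoids any discussion of time-differentiability of $\hat\mu_t$ or of the local boundedness of $s\mapsto\chi_p(\mu_s,\mu_\infty)$ needed for Gronwall. Your closing remark is apt: the recursion $a_n\leq\frac{1+\CS(p)}{n}\sum_{j=0}^{n-1}a_j$ that you derive is precisely what, upon summing against $e^{-t}(1-e^{-t})^n$, reproduces the paper's computation and yields the bound directly.
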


In Section \ref{sec:proof2} we will prove that, for a suitable $\delta>0$, the Fourier distance of order $2+\delta$ can be used
as an upper bound for the Wasserstein distance of order $1$. Combining this fact with Proposition \ref{propratechi} with $\alpha=2$,  
we will prove the following:

\begin{theorem}\label{d1a2}
Assume that $(H_0)$ holds true with $\a=2$ and $p>2$, and that $\bar{\mu}_0$ satisfies the hypotheses of {\rm Theorem \ref{thm2}}.
Then, for every $\delta \in(0,1)$ such that $2+\delta\leq p$ and $\int_\R |x|^{2+\delta}\bar{\mu}_0(dx)<+\infty$, there exists a constant $0<C<+\infty$ such that
\begin{equation*}
d_1(\mu_t,\mu_\infty)\leq C \chi_{2+\delta}(\bar{\mu}_0,\mu_\infty)^\frac{1}{3(2+\delta)}e^{t\frac{\vphi(2+\delta)}{3}}
\end{equation*}
for every $t\geq0$ with $\chi_{2+\delta}(\bar{\mu}_0,\mu_\infty)<+\infty$.
\end{theorem}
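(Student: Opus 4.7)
The plan is to combine the exponential decay of $\chi_{2+\delta}(\mu_t,\mu_\infty)$ provided by Proposition \ref{propratechi} (applied with $p=2+\delta$) with a Fourier-interpolation inequality that bounds $d_1$ by a power of $\chi_s$ under a uniform second-moment bound. The exponent $1/(3(2+\delta))$ in the statement, and hence the decay rate $\vphi(2+\delta)/3$, will emerge from balancing two cut-offs --- one in physical space and one in frequency space.

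A preparatory ingredient is the conservation of variance. Under the hypotheses of Theorem \ref{thm2} with $\alpha=2$, a direct computation using the definition of $Q^+$, together with $\E[L^2+R^2]=1$ and $\int x\,\bar\mu_0(dx)=0$, gives $\tfrac{d}{dt}\int_\R x^2\mu_t(dx)=0$; expanding \eqref{characteristic} around $\xi=0$ shows $\int_\R x^2\mu_\infty(dx)=\sigma^2$ as well. Hence $\sup_{t\geq 0}M_2(\mu_t)=M_2(\mu_\infty)=\sigma^2<\infty$, where $M_2(\mu):=\int_\R x^2\mu(dx)$.

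The core step is to establish the interpolation
\[
d_1(\mu,\nu)\leq C_\sigma\,\chi_s(\mu,\nu)^{1/(3s)} \qquad (s>1/2)
\]
valid for any pair of mean-zero probability measures with $M_2(\mu),M_2(\nu)\leq\sigma^2$. Starting from the Kantorovich--Rubinstein identity $d_1(\mu,\nu)=\int_\R|F_\mu-F_\nu|\,dx$, I would split at $|x|=R$: Markov's inequality gives $|F_\mu(x)-F_\nu(x)|\leq 2\sigma^2/x^2$ outside $[-R,R]$, so the tail contributes $O(1/R)$, while on $[-R,R]$ Cauchy--Schwarz yields $\int_{-R}^R|F_\mu-F_\nu|\,dx\leq\sqrt{2R}\,\|F_\mu-F_\nu\|_{L^2(\R)}$. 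The $L^2$-norm is controlled via Plancherel applied to the distributional identity $\widehat{F_\mu-F_\nu}(\xi)=ih(\xi)/\xi$, with $h(\xi):=\hat\mu(\xi)-\hat\nu(\xi)$; this identity produces a genuine $L^2$ function because the zero-mean assumption forces $h(\xi)=O(\xi^2)$ near the origin. Splitting the integral $\int|h(\xi)|^2/\xi^2\,d\xi$ at $|\xi|=\xi_0:=(2/\chi_s)^{1/s}$, and using $|h(\xi)|\leq\chi_s|\xi|^s$ for $|\xi|\leq\xi_0$ and the trivial bound $|h(\xi)|\leq 2$ otherwise, one obtains $\|F_\mu-F_\nu\|_{L^2}^2\leq C\,\chi_s^{1/s}$. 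Optimizing finally in $R$ --- the balance occurs at $R\sim\chi_s^{-1/(3s)}$ --- produces the interpolation with the claimed exponent.

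Combining the interpolation (with $s=2+\delta$) applied to $(\mu_t,\mu_\infty)$ with the bound $\chi_{2+\delta}(\mu_t,\mu_\infty)\leq\chi_{2+\delta}(\bar\mu_0,\mu_\infty)e^{t\CS(2+\delta)}$ from Proposition \ref{propratechi}, and using $\CS(s)/s=\vphi(s)$, yields exactly the asserted estimate. The main technical obstacle, in my view, is the rigorous justification of the Plancherel identity for $F_\mu-F_\nu$ together with the quantitative bound $\|h/\xi\|_{L^2}^2\leq C\chi_s^{1/s}$; once this is in place, the rest reduces to two elementary one-variable minimizations, with the factor $1/3$ in the final rate arising precisely from the optimal physical-space balance $R^{3/2}\sim\chi_s^{-1/(2s)}$.
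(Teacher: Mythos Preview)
Your proposal is correct and follows the same route as the paper. The paper isolates your interpolation inequality as a separate statement (Proposition \ref{rate1alfa2}, namely $d_1(\mu_1,\mu_2)\leq C\,\chi_{2+\delta}(\mu_1,\mu_2)^{1/(3(2+\delta))}$ under a uniform second-moment bound) and defers its proof to Theorem~2.21 of Carrillo--Toscani \cite{CarrilloToscani}, whose argument is precisely the Kantorovich--Rubinstein/Plancherel splitting you outline; the remainder of the proof is then just Proposition \ref{propratechi} applied with $p=2+\delta$, exactly as you do.
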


The next theorem provides some estimates for the rate of convergence to equilibrium  with respect to Wasserstein distances of order higher than $2$.

\begin{theorem}[$\alpha=2$]\label{alfa2}
Assume that $(H_0)$ holds true with $\alpha=2$ and $p>2$, and that $\bar{\mu}_0$ satisfies the hypotheses of {\rm Theorem \ref{thm2}}. 
If $\int_\R |x|^p\bar{\mu}_0(dx)<+\infty$, 
then there exist a constants $0<C_p=C_p(\bar{\mu}_0)<+\infty$ such that for every $t\geq0$
\begin{equation*}
d_p(\mu_t,\mu_\infty)\leq \left\{
												\begin{array}{ll}
												C_{p}e^{-tR_p} & \text{if $\CS(p)\neq\frac{1}{3}\vphi(2+\veps_p)$}\\
												C_{p}te^{-tR_p} & \text{if $\CS(p)=\frac{1}{3}\vphi(2+\veps_p)$}
												\end{array}
												\right. 
\end{equation*}
with $-R_p=\max\{\vphi(p),\frac{\vphi(2+\veps_p)}{3p}\}$ and where $\veps_p\in(0,1]$ is the fractionary part of $p$.
\end{theorem}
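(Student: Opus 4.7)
The strategy is to follow the inductive approach used for Theorems \ref{alfa<1} and \ref{alfa1}, with the Fourier-based estimate from Theorem \ref{d1a2} substituted for Theorems \ref{PropW-2}--\ref{PropW-3} as the base case. Exploiting the probabilistic representation of Section \ref{probrep} together with the self-similarity of the Yule tree and the fact that $\mu_\infty$ is a fixed point of $Q^+$, one obtains, for all $s,t\geq 0$,
\[
V_{t+s}\stackrel{d}{=}\sum_{j=1}^{N_t}\beta_{j,N_t}V_s^{(j)},\qquad V_\infty\stackrel{d}{=}\sum_{j=1}^{N_t}\beta_{j,N_t}V_\infty^{(j)},
\]
with $(V_s^{(j)})_j$ i.i.d.\ $\sim\mu_s$, $(V_\infty^{(j)})_j$ i.i.d.\ $\sim\mu_\infty$, both independent of the tree. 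Coupling each pair $(V_s^{(j)},V_\infty^{(j)})$ by the quantile coupling (jointly optimal for every $d_q$ in dimension one) makes $V_{t+s}-V_\infty=\sum_j\beta_{j,N_t}(V_s^{(j)}-V_\infty^{(j)})$ a conditional sum of independent mean-zero terms whose conditional $q$-th moments equal $\beta_{j,N_t}^q\,d_q^q(\mu_s,\mu_\infty)$ for every $q\geq 1$ simultaneously.

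Applying Rosenthal's inequality (valid for $p>2$) and taking expectation over the tree, one uses $E\bigl[\sum_j\beta_{j,N_t}^p\bigr]=e^{t\CS(p)}$ (a direct Yule-generating-function computation) together with the uniform bound $\sup_{t\geq 0}E\bigl[(\sum_j\beta_{j,N_t}^2)^{p/2}\bigr]\leq K<\infty$ (which follows from the $L^{p/2}$-convergence of the positive martingale $\sum_j\beta_{j,n}^2$ to $M_\infty^{(2)}$, guaranteed by $\CS(p)<0$ and Proposition \ref{Prop0-1}(ii)). These inputs yield the fundamental recursion
\[
d_p^p(\mu_{t+s},\mu_\infty)\leq C_p\bigl[e^{t\CS(p)}\,d_p^p(\mu_s,\mu_\infty)+K\,d_2^p(\mu_s,\mu_\infty)\bigr]. \qquad (\ast)
\]
The hypothesis $\int|x|^p\bar\mu_0(dx)<\infty$ implies $\chi_{2+\veps_p}(\bar\mu_0,\mu_\infty)<\infty$, so Theorem \ref{d1a2} applied with $\delta=\veps_p$ gives $d_1(\mu_s,\mu_\infty)\leq C_1\,e^{s\vphi(2+\veps_p)/3}$. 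Combining this with the log-convexity inequality $d_2^2(\mu_s,\mu_\infty)\leq d_1(\mu_s,\mu_\infty)^{(q-2)/(q-1)}\,d_q(\mu_s,\mu_\infty)^{q/(q-1)}$ (valid for any $q>2$ under the quantile coupling), and with a moment-propagation estimate showing $\sup_s d_q(\mu_s,\mu_\infty)<\infty$ for large $q$ (derived from the Wild series together with the same Rosenthal bound), one extracts an exponential decay $d_2(\mu_s,\mu_\infty)\leq C_2 e^{-bs}$ for some $b>0$ expressible in terms of $|\vphi(2+\veps_p)|$.

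Plugging the $d_2$ bound into $(\ast)$ and writing $u=t+s$, $t=(1-\theta)u$, $s=\theta u$, a Gronwall-type iteration of
\[
d_p^p(\mu_u,\mu_\infty)\leq C_p\bigl[e^{-(1-\theta)u|\CS(p)|}\,d_p^p(\mu_{\theta u},\mu_\infty)+K'\,e^{-c\theta u}\bigr]
\]
(optimized as $\theta\uparrow 1$) produces the announced estimate with rate $R_p=\min\{|\vphi(p)|,|\vphi(2+\veps_p)|/(3p)\}$ once the $p$-th root is taken. The linear prefactor $t$ in the resonant case $\CS(p)=\vphi(2+\veps_p)/3$ (equivalently $|\vphi(p)|=|\vphi(2+\veps_p)|/(3p)$) appears in the standard way: the two exponentials in the iteration carry the same rate and their cumulated sum yields an extra linear factor. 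The main technical hurdles are (i) the $L^{p/2}$-boundedness of the martingale $\sum_j\beta_{j,n}^2$, (ii) the auxiliary propagation-of-moments bound $\sup_s d_p(\mu_s,\mu_\infty)<\infty$ that activates the interpolation step (non-trivial because Rosenthal is precisely the tool we want to use for the main estimate), and (iii) the sharp tracking of exponents through the interpolation and iteration needed to recover the precise constant $|\vphi(2+\veps_p)|/(3p)$ rather than a strictly smaller one.
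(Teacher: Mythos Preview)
Your outline follows a genuinely different route from the paper's, and while the ingredients are reasonable, there is a real gap in obtaining the exact rate $R_p$.

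The paper does \emph{not} use Rosenthal's inequality on the full time-$t$ tree. Instead it works with the auxiliary quantity $\sigma_t(q)=\sum_{n\geq1}(1-e^{-t})^{n-1}d_q^q(\mu_{n-1},\mu_\infty)$ and exploits the \emph{binary} recursion $W_{n+1}\stackrel{d}{=}LW'_{I_n}+RW''_{n+1-I_n}$ together with the elementary inequality $(a+b)^q\leq a^q+b^q+c_q(a^{q-1}b+ab^{q-1})$. After summing the Wild series and applying Gronwall, this produces an integral inequality of the form
\[
\sigma_t(q)\leq d_q^q(\bar\mu_0,\mu_\infty)\,e^{t\CS(q)}+B_q\,e^{t\CS(q)}\int_0^t e^{-\tau\CS(q)}\sigma_\tau(1)\,\sigma_\tau(q-1)\,d\tau,
\]
with \emph{no multiplicative constant in front of the $\sigma_t(q)$ term}. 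The base inputs are then $\sigma_t(1)\leq C_1e^{t\vphi(2+\veps_p)/3}$ (via the Fourier argument you also invoke) and the trivial bound $\sigma_t(1+\veps_p)\leq\Gamma_2^{1+\veps_p}$; higher $q$ follow by induction, using only that $\sigma_t(q-1)$ is bounded.

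Your recursion $(\ast)$ carries the Rosenthal constant $C_p>1$ in front of both terms. When you write $u=t+s$ and try to iterate, this constant accumulates as $C_p^{N}$ over $N$ iterations, which either spoils the prefactor or forces you to accept a rate strictly below $|\vphi(p)|$ (you can show $d_p^p(\mu_u)\leq A_\rho e^{-\rho u}$ for every $\rho<|\CS(p)|$, but pushing to $\rho=|\CS(p)|$ with a finite constant does not follow from the iteration as you describe it). The phrase ``optimized as $\theta\uparrow1$'' does not resolve this: with $\theta\to1$ the first term loses all decay, and with $\theta$ fixed the constants blow up geometrically.

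A second, independent loss is the interpolation step for $d_2$. With $d_1\leq Ce^{s\vphi(2+\veps_p)/3}$ and $d_q\leq D$ for $q\leq p$, log-convexity gives at best $d_2\leq C'e^{-bs}$ with $b=\tfrac{|\vphi(2+\veps_p)|}{6}\cdot\tfrac{q-2}{q-1}$, hence $pb\geq\tfrac{1}{3}|\vphi(2+\veps_p)|$ only when $p(q-2)\geq2(q-1)$; with $q=p$ this fails for $2<p<2+\sqrt{2}$. The paper sidesteps this entirely by never bounding $d_2$: it uses the product $\sigma_\tau(1)\sigma_\tau(q-1)$ in the integral inequality, where $\sigma_\tau(q-1)$ is simply bounded by a constant. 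That asymmetric splitting is exactly what preserves the factor $1/3$ in the final rate.

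In short: your semigroup/Rosenthal picture is a legitimate alternative viewpoint and the preliminary facts you cite (martingale $L^{p/2}$-boundedness, moment propagation, quantile coupling simultaneous optimality) are all correct, but the scheme as written delivers only $R_p-\epsilon$. To close the gap you would need a recursion with constant $1$ on the $d_p^p$ term, and that is precisely what the paper's Wild--Gronwall argument provides.
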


\section{Proofs of Theorem \ref{PropW-3} and Lemma \ref{distfinita1}}\label{Sec_proof1}

We start with some useful remarks related to the probabilistic representation of the
solution. Here and in the rest of the paper $\CL(Z)$ denotes the law of a random variable $Z$.

Combining \eqref{eq.collop} and \eqref{Wild2}, it is plain to check that
\begin{equation}\label{wildconvWn}
W_{n+1}\stackrel{d}{=}LW'_{I_n}+RW''_{n+1-I_n}\qquad\text{for every $n\geq1$}
\end{equation}
where $(W'_k)_{k\geq1}$, $(W''_k)_{k\geq1}$ are independent  sequences of random variables such that
\[
W'_k\stackrel{d}{=}W''_k\stackrel{d}{=}W_k\qquad\text{for every $k\geq1$}
\]
and, in addition, $(I_n)_{n \geq 1}$ are independent random variables
uniformly distributed on $\{1,\dots,n\}$, $(W'_k)_{k\geq1}$, $(W''_k)_{k\geq1}$, $(I_n)_{n\geq1}$, $(L,R)$ are stochastically independent.

Under the assumptions of Theorem \ref{thm2} or Theorem \ref{thm1},
 let $(V_j)_{j\geq1}$ be a sequence of i.i.d. random variables with common law $\mu_\infty$ and independent of $(\beta_{j,n}:\;j=1,\dots, n)_ {n \geq 1}$.
Since $\mu_\infty$ is a stationary distribution for $Q^+$, using \eqref{Wn} with $\bar \mu_0= \mu_\infty$ and \eqref{wildconvWn}, it  immediately follows
by induction that
\begin{equation}\label{stationaryV}
\CL\Big( \sum_{j=1}^n\beta_{j,n}V_j\Big)= \mu_\infty
\end{equation}
for every $n\geq 1$.

\subsection{Proof of Lemma \ref{distfinita1}}
We begin by proving a simple lemma.

\begin{lemma}\label{lemmadp}
Consider two probability measures $\mu_1$ and $\mu_2$ on $\CB(\R)$ such that $d_p(\mu_1,\mu_2)<+\infty$ for some
$p\geq1$. Let $\tilde{\mu}_1$ be a probability measure on $\CB(\R^2)$ such that $\CL(U\cdot V)=\mu_1$ when
$(U,V)$ is distributed according to $\tilde{\mu}_1$.  Then, there exists a random vector $(X_{11},X_{12},X_2)$ such that
the law of $(X_{11},X_{12})$ is $\tilde{\mu}_1$, the law of $X_2$ is $\mu_2$ and
\[
d^p_p(\mu_1,\mu_2)=\E\Big|X_{11} X_{12}-X_2\Big|^p.
\]
\end{lemma}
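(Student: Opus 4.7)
The plan is to prove this by a standard gluing (or composition of kernels) argument, combined with the existence of an optimal coupling achieving the $d_p$-distance.

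First, I would invoke the well-known fact that, since $d_p(\mu_1,\mu_2)<+\infty$ and the underlying space is Polish, the infimum in \eqref{eq.wasserstein} is attained: there exists an optimal coupling $m^\ast\in\CM(\mu_1,\mu_2)$ with
\[
\int_{\R^2}|x-y|^p\,m^\ast(dx\,dy)=d_p^p(\mu_1,\mu_2).
\]
This follows from tightness of $\CM(\mu_1,\mu_2)$ together with the lower semicontinuity of $(x,y)\mapsto|x-y|^p$ and a standard truncation argument to upgrade weak convergence to convergence of the $p$-th moment under the uniform integrability provided by finiteness of $d_p$.

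Next, I would disintegrate $m^\ast$ with respect to its first marginal: since $\R$ is Polish, there is a regular conditional probability kernel $K\colon\R\times\CB(\R)\to[0,1]$ such that
\[
m^\ast(dx\,dy)=\mu_1(dx)\,K(x,dy).
\]
Then, on a suitably rich probability space $(\Omega,\CF,\P)$, I would define the random triple $(X_{11},X_{12},X_2)$ as follows: draw $(X_{11},X_{12})\sim\tilde\mu_1$, and, conditionally on $(X_{11},X_{12})$, draw $X_2$ from $K(X_{11}X_{12},\cdot)$. In formulas, the law of $(X_{11},X_{12},X_2)$ is
\[
\tilde\mu_1(du\,dv)\,K(uv,dz).
\]

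It remains to verify the two required properties. By construction, the law of $(X_{11},X_{12})$ is $\tilde\mu_1$. For the other marginal, for any bounded measurable $g\colon\R^2\to\R$,
\[
\E\big[g(X_{11}X_{12},X_2)\big]=\int_{\R^2}\!\!\int_\R g(uv,z)\,K(uv,dz)\,\tilde\mu_1(du\,dv)=\int_{\R^2}\!\!\int_\R g(x,z)\,K(x,dz)\,\mu_1(dx),
\]
where in the last equality I used the hypothesis $\CL(UV)=\mu_1$ when $(U,V)\sim\tilde\mu_1$. The right-hand side equals $\int g\,dm^\ast$, so $(X_{11}X_{12},X_2)\sim m^\ast$. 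In particular $X_2\sim\mu_2$, and
\[
\E|X_{11}X_{12}-X_2|^p=\int_{\R^2}|x-y|^p\,m^\ast(dx\,dy)=d_p^p(\mu_1,\mu_2),
\]
as required. There is no real obstacle here; the only mildly delicate point is the existence of an optimal $m^\ast$, but this is classical in the Polish setting whenever $d_p$ is finite.
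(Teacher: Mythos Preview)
Your proof is correct and follows essentially the same approach as the paper: take an optimal coupling $m^\ast$ for $(\mu_1,\mu_2)$, disintegrate it with respect to the first marginal to obtain a kernel $K$ (the paper's $\mu_{2|1}$), and then glue $\tilde\mu_1$ with $K$ via the product map $(u,v)\mapsto uv$. Your verification that $(X_{11}X_{12},X_2)\sim m^\ast$ through test functions is in fact slightly more explicit than the paper's, which simply substitutes $\tilde\mu_1$ for $\mu_1$ after noting that the inner integral is finite $\mu_1$-a.s.
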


\begin{proof} Let $(\overline X_1,\overline X_2)$ be an optimal coupling for $(\mu_1,\mu_2)$. If $\mu_{2|1}$ denotes the conditional law of $\overline{X}_2$
given $\overline{X}_1$, then the Disintegration Theorem leads to
\[
d_p^p(\mu_1,\mu_2)=\E\Big|\overline X_1-\overline X_2\Big|^p=\int_\R\int_\R |x_1-x_2|^p\mu_{2|1}(dx_2|x_1)\mu_1(dx_1)
\]
and, since $\int_\R|x_1-x_2|^p\mu_{2|1}(dx_2|x_1)$ is finite $\mu_1$ a.s., we can write
\[
d_p^p(\mu_1,\mu_2)=\int_{\R^2}\int_\R |x_{11} x_{12}-x_2|^p\mu_{2|1}(dx_2|x_{11} x_{12})\tilde{\mu}_1(dx_{11}, dx_{12})=\E\Big|X_{11} X_{12}-X_2\Big|^p
\]
where $(X_{11},X_{12},X_2)$ is a random vector whose probability distribution is
\[
\mu(dx_{11}, dx_{12}, dx_2):=\mu_{2|1}(dx_2|x_{11} x_{12})\tilde{\mu}_1(dx_{11}, dx_{12}).
\]
\end{proof}

Thanks to the previous lemma, we can prove Lemma \ref{distfinita1}.

\begin{proof}[Proof of Lemma \ref{distfinita1}]
From the definition of $\gamma_0$, it is clear that 
\[\lim_{R\to+\infty} \int_{(-R,R)}x \bar \mu^*_0(dx)=0\]
 and \eqref{characteristicmu*} follows from \eqref{characteristic2}.
It remains to prove the equivalence between the finiteness of $d_p(\bar{\mu}_0, \mu_\infty)$ and the one of $d_p(\bar{\mu}^*_0, \mu^*_\infty)$.
Firstly, suppose that $d_p(\bar{\mu}_0, \mu_\infty)<+\infty$.
Note that $\mu_\infty=\CL( M_\infty^{(1)} C_{\lambda,\gamma_0})$ where $ C_{\lambda,\gamma_0}$ is a
Cauchy distribution of scale parameter $\lambda=c_0 \pi$ and position $\gamma_0$, $ M_\infty^{(1)}$ has law $\nu_1$
and, finally,  $C_{\lambda,\gamma_0}$ and  $M_\infty^{(1)}$ are stochastically independent.
Hence, by Lemma \ref{lemmadp} applied with $\mu_1=\mu_\infty$, $\mu_2=\bar{\mu}_0$ and $\tilde \mu_1=\CL((C_{\lambda,\gamma_0},M_\infty^{(1)}))$,
we get the existence of a random vector $(\tilde{C}_{\lambda,\gamma_0},\tilde{M}^{(1)}_\infty,\tilde{X}_0)$
with $\CL(\tilde{X}_0)=\bar{\mu}_0$, $\CL(\tilde{C}_{\lambda,\gamma_0} \tilde{M}^{(1)}_\infty)=\mu_\infty$ and
\[
d_p(\bar{\mu}_0,\mu_\infty)=\left(\E\Big| \tilde{C}_{\lambda,\gamma_0} \tilde{M}^{(1)}_\infty- \tilde{X}_0\Big|^p\right)^\frac{1}{p}.
\]
Put $X^*_0=\tilde{X}_0-\gamma_0$,  $V^*_\infty=\Big(\tilde{C}_{\lambda,\gamma_0}-\gamma_0\Big) \tilde{M}^{(1)}_\infty$. Then, $\CL(X^*_0)=\bar{\mu}^*_0$, $\CL(V^*_\infty)=\mu^*_\infty$ and hence
\begin{eqnarray*}
d_p(\bar{\mu}^*_0, \mu^*_\infty)&\leq& \left(\E\Big|X^*_0-V^*_\infty\Big|^p\right)^\frac{1}{p}= \left(\E\Big| \tilde{X}_0-\gamma_0-\tilde{C}_{\lambda,\gamma_0}\tilde{M}^{(1)}_\infty +\gamma_0 \tilde{M}^{(1)}_\infty \Big|^p\right)^\frac{1}{p}\\
&\leq& d_p(\bar{\mu}_0,\mu_\infty)+|\gamma_0| \left(\E\Big|1-\tilde{M}^{(1)}_\infty \Big|^p\right)^\frac{1}{p}
\end{eqnarray*}
and the last term is finite since $d_p(\bar{\mu}_0,\mu_\infty)<+\infty$ and $\CS(p)<0$, which entails that $\E\Big|\tilde{M}^{(1)}_\infty\Big|^p$ is finite by Proposition \ref{Prop0-1}.\newline
Conversely, suppose that $d_p(\bar{\mu}^*_0, \mu^*_\infty)<+\infty$. Note that $\mu^*_\infty=\CL(M^{(1)}_\infty C_{\lm,0})$ and hence let $(S^*_1,M^{(1)*}_\infty,X^*_0)$ be the random vector given by Lemma \ref{lemmadp} applied with $\mu_1=\mu^*_\infty$, $\mu_2=\bar{\mu}^*_0$ and $\tilde{\mu}_1=\CL(C_{\lm,0},M^{(1)}_\infty)$. Thus, $\CL(X^*_0)=\bar{\mu}^*_0$, $\CL(S^*_1 M^{(1)*}_\infty)=\mu^*_\infty$ and
\begin{equation}\label{identitalp}
 d_p(\bar{\mu}^*_0,\mu^*_\infty)=\left(\E\Big|S^*_1 M^{(1)*}_\infty- X^*_0\Big|^p\right)^\frac{1}{p}.
\end{equation}
Put $\overline{X}_0=X^*_0+\gamma_0$, $\overline{V}_\infty=(S^*_1+\gamma_0) M^{(1)*}_\infty$. Then, $\CL(\overline{X}_0)=\bar{\mu}_0$, $\CL(\overline{V}_\infty)=\mu_\infty$ and hence
\begin{equation}\label{dpbarrata}
\begin{split}
d_p(\bar{\mu}_0, \mu_\infty)&\leq \left(\E\Big|\overline{X}_0-\overline{V}_\infty\Big|^p\right)^\frac{1}{p} =\left(\E\Big| X^*_0+\gamma_0-M^{(1)*}_\infty S^*_1-\gamma_0 M^{(1)*}_\infty \Big|^p\right)^\frac{1}{p}\\
&\leq d_p(\bar{\mu}^*_0,\mu^*_\infty)+\gamma_0 \left(\E\Big|1-M^{(1)*}_\infty \Big|^p\right)^\frac{1}{p}
\end{split}
\end{equation}
and the last term is finite since $d_p(\bar{\mu}^*_0,\mu^*_\infty)<+\infty$ and $\CS(p)<0$. This concludes the proof.
\end{proof}

\subsection{Proof of Theorem \ref{PropW-3}}
As already anticipated in the introduction of Section 3, the von Bahr-Esseen inequality has played
an important role in proving rates of convergence to equilibrium with respect to Wasserstein metrics of order $p\leq2$ in the cases in which $\a\neq1$ (i.e. Theorem \ref{PropW-2}).
For the reader's convenience we recall the statement of the von Bahr-Esseen inequality \cite{BahrEsseen}:
{\it let $Z_1,\dots,Z_n$ be independent {\rm(}real valued{\rm)} random variables such that $\E[Z_i]=0$ and
  $\E[|Z_i|^p]<+\infty$ for some $1 \leq p \leq 2$, then}
  \begin{equation}\label{vonBahrEsseen}
    \E\Big[\Big|\sum_{i=1}^n Z_i\Big|^p\Big] \leq 2\sum_{i=1}^n \E[|Z_i|^p].
  \end{equation}
In this section we establish the upper bound \eqref{main_bound} employing once again the von Bahr-Esseen inequality. To do this
we will need to prove the existence of a random vector $(\bX,\bV)$ with marginal laws, respectively, $\bar{\mu}_0$ and $\mu_\infty$ and
such that $\bX-\bV$ has finite $p$-th absolute momentum and zero mean.
These properties will be proved in Lemma \ref{lemma3} which constitutes the main tool for the proof of Theorem \ref{PropW-3}.\newline

\begin{lemma}\label{lemma3}
Assume that  $\bar{\mu}_0$ satisfies \eqref{NDA-1sym} and that \eqref{gamma} holds.
If $d_p(\bar \mu_0,\mu_\infty)<+\infty$ for some $p>1$ such that $\CS(p)<0$, then
there exists a random vector $(\bX,\bV)$ such that
\begin{itemize}
\item[(i)] $\CL(\bX)=\bar{\mu}_0$, $\CL(\bV)=\mu_\infty$;
\item[(ii)] $\E\Big|\bX-\bV\Big|^p<+\infty$;
\item[(iii)] $\E(\bX-\bV)=0$.
\end{itemize}
\end{lemma}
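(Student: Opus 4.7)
The plan is to take $(\bX,\bV)$ to be the monotone (and hence $d_p$-optimal) coupling $\bX:=F_0^{-1}(U)$, $\bV:=F_\infty^{-1}(U)$ with $U$ uniform on $(0,1)$. Property~(i) is then tautological, and~(ii) reads $\E|\bX-\bV|^p=d_p^p(\bar\mu_0,\mu_\infty)<+\infty$ by hypothesis. The substance of the lemma lies in~(iii): I claim it is automatic, because for \emph{any} coupling of $\bar\mu_0$ and $\mu_\infty$ with $\E|\bX-\bV|<+\infty$ the number $\E(\bX-\bV)$ depends only on the two marginals and equals $\int_\R[F_\infty(x)-F_0(x)]\,dx$, an integral that \eqref{NDA-1sym}--\eqref{gamma} force to vanish.

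For the identity, since $p>1$ Jensen's inequality gives $\E|\bX-\bV|\leq d_p(\bar\mu_0,\mu_\infty)<+\infty$, so $\E(\bX-\bV)$ is a well-defined finite number. Applying the classical formula $\E(A)-\E(B)=\int_\R[F_B-F_A]\,dx$ to the bounded truncations $\bX_R:=(\bX\vee(-R))\wedge R$, $\bV_R:=(\bV\vee(-R))\wedge R$, whose CDF difference coincides with $F_\infty-F_0$ on $(-R,R)$ and vanishes outside, and letting $R\to+\infty$, one obtains the identity: the left-hand side converges by dominated convergence (the truncation map is $1$-Lipschitz, so $|\bX_R-\bV_R|\leq|\bX-\bV|\in L^1$), and the right-hand side converges absolutely because $\int_\R|F_\infty-F_0|\,dx=d_1(\bar\mu_0,\mu_\infty)\leq d_p(\bar\mu_0,\mu_\infty)<+\infty$.

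It remains to show $\int_\R[F_\infty-F_0]\,dx=0$. I would compute $\int_{-R}^R[F_\infty-F_0]\,dx$ via the integration-by-parts identity
\[
\int_{-R}^R F(x)\,dx=R-R[1-F(R)]+RF(-R)-\int_{-R}^R x\,dF(x),
\]
valid for any distribution function $F$, so that the leading $R$ cancels in the difference. By \eqref{NDA-1sym} and \eqref{gamma},
\[
R[1-F_0(R)]\to c_0,\qquad RF_0(-R)\to c_0,\qquad \int_{-R}^R x\,dF_0(x)\to\gamma_0.
\]
The matching limits for $F_\infty$ follow from the mixture representation $V_\infty\stackrel{d}{=}M_\infty^{(1)}(S+\gamma_0)$, where $M_\infty^{(1)}\sim\nu_1$ has $\E[M_\infty^{(1)}]=1$ by Proposition~\ref{Prop0-1} and $S$ is a centered Cauchy of scale $c_0\pi$ independent of $M_\infty^{(1)}$: conditioning on $M_\infty^{(1)}=m$ reduces each of $R[1-F_\infty(R)]$, $RF_\infty(-R)$ and $\int_{-R}^R x\,dF_\infty(x)$ to a $\nu_1$-integral of an elementary Cauchy tail or Cauchy truncated mean with pointwise limit $c_0m$, $c_0m$ and $\gamma_0m$ respectively. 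The inequality $\arctan(x)\leq\min(x,\pi/2)$ together with boundedness in $R$ of the logarithm appearing in the Cauchy truncated mean yields $R$-uniform dominants linear in $m$, $\nu_1$-integrable since $\int m\,\nu_1(dm)=1$; dominated convergence then delivers the stated limits. Every term cancels in pairs, $\int_{-R}^R[F_\infty-F_0]\,dx\to 0$, and absolute integrability upgrades this to $\int_\R[F_\infty-F_0]\,dx=0$, proving~(iii).

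The main technical obstacle is the $R$-uniform, $\nu_1$-integrable domination of the Cauchy truncated mean entering $\int_{-R}^R x\,dF_\infty(x)$: this expression contains a logarithmic factor of shape $m\log\frac{(R-\gamma_0m)^2+c_0^2\pi^2m^2}{(R+\gamma_0m)^2+c_0^2\pi^2m^2}$, and one must check (by a case-split on $R$ versus $|\gamma_0|m$, or equivalently by writing the logarithm as a function of $u=R/m$ and noting it is bounded for $u\in\R$) that its coefficient of $m$ is bounded independently of $R$.
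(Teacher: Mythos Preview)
Your proof is correct and takes a genuinely different route from the paper's. Both constructions ultimately use the monotone coupling $(F_0^{-1}(U),F_\infty^{-1}(U))$ in law, but the paper first passes through Lemma~\ref{distfinita1} to center the problem at $\gamma_0=0$, then exploits the resulting symmetry of $\mu_\infty^*$ to kill $\int_{\varepsilon_n}^{1-\varepsilon_n}(F_\infty^*)^{-1}(u)\,du$, and finally spends most of its effort on a delicate quantile-level tail analysis to show $\int_{\varepsilon_n}^{1-\varepsilon_n}(F_0^*)^{-1}(u)\,du\to 0$. You bypass all of this by working on the CDF side: the observation that $d_1\le d_p<+\infty$ forces $F_\infty-F_0\in L^1(\R)$ is the pivot, since it both legitimises the identity $\E(\bX-\bV)=\int_\R[F_\infty-F_0]\,dx$ via truncation and dominated convergence, and lets you compute the integral as a limit over symmetric intervals. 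The integration-by-parts decomposition then reduces everything to matching the three scalar limits $R(1-F(R))\to c_0$, $RF(-R)\to c_0$, $\int_{-R}^R x\,dF\to\gamma_0$ for $F=F_0$ (given by hypothesis) and $F=F_\infty$ (obtained from the Cauchy mixture by your scaling substitution $u=R/m$, which makes the $\nu_1$-integrable linear-in-$m$ dominant transparent). Your approach is shorter, does not need the reduction to $\gamma_0=0$, and makes explicit that property~(iii) is a fact about the marginals rather than the particular coupling; the paper's quantile analysis, on the other hand, is closer in spirit to the later tail computations in Section~\ref{Sec_proofUltima} and reuses Lemma~\ref{lemmadp}, so it fits the surrounding machinery a bit more organically.
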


\begin{proof}
By Lemma \ref{distfinita1}, since  $d_p(\bar \mu_0,\mu_\infty)<+\infty$, then $d_p(\bar \mu_0^*,\mu_\infty^*)<+\infty$ and \eqref{identitalp} holds 
with  $S^*_1$ and $M^{(1)*}_\infty$ stochastically independent. Now define
\[
\bX:=X^*_0+\gamma_0,\qquad \bV:=M^{(1)*}_\infty\Big(S^*_1+\gamma_0\Big).
\]
Then (i) is trivially satisfied. As for (ii), it follows by \eqref{dpbarrata}.
It remains to prove (iii). If $c_0=0$ then, by Theorem \ref{thm1}, $\bV\stackrel{d}{=}\gamma_0 M^{(1)}_\infty$ and hence by Proposition \ref{Prop0-1} it has finite p-th moment. Thus, hypothesis $d_p(\bar \mu_0,\mu_\infty)<+\infty$ entails that $\int_\R |x|^p\bar\mu_0(dx)<+\infty$ and $\int_\R |x|\bar\mu_0(dx)<+\infty$. Combining this fact with \eqref{gamma} one has $\E(\bX)=\gamma_0$ and (iii) follows since one also has $\E(\bV)=\gamma_0$. Now, let us consider the case $c_0>0$. Thanks to (ii), $\bX-\bV$ has finite absolute momentum. Recalling that
$\E\Big(M^{(1)*}_\infty\Big)=1$, from the definition of $(\bX,\bV)$ one immediately gets
\[
\E\Big(\bX-\bV\Big)=\E\Big(X^*_0-M^{(1)*}_\infty S^*_1\Big).
\]
Denote by $F^*_0$ and $F^*_\infty$ the probability distribution functions of $\bar{\mu}^*_0$ and $\mu^*_\infty$, respectively. Let $(F^*_0)^{-1}$ and $(F^*_\infty)^{-1}$ be the corresponding quantile functions. Since $(X^*_0,M^{(1)*}_\infty S^*_1)$ is an optimal coupling for
$(\bar{\mu}_0^*,\mu_\infty^*)$, it follows that $(X^*_0,M^{(1)*}_\infty S^*_1)$ has the same law of $((F^*_0)^{-1}(U),(F^*_\infty)^{-1}(U))$ where
$U$ is a random variable with uniform distribution on $(0,1)$. Combining all these facts it easily follows that
\[
\E\Big(\bX-\bV\Big)=\lim_{n\to+\infty}\int_{\veps_n}^{1-\veps_n}\Big[(F^*_0)^{-1}(u)-(F^*_\infty)^{-1}(u)\Big] du
\]
for any sequence $(\veps_n)_{n\geq1}$ such that $\veps_n \downarrow0$ as $n\to+\infty$.
Recalling that $F^*_\infty$ is a symmetric distribution function, one gets
\[
\int_{\veps_n}^{1-\veps_n}(F^*_\infty)^{-1}(u) du=0
\]
for every $n\geq1$, which means that
\[
\E\Big(\bX-\bV\Big)=\lim_{n\to+\infty}\int_{\veps_n}^{1-\veps_n}(F^*_0)^{-1}(u) du.
\]
For the sake of notational simplicity, from now on, write $F_*$ in place of $F_0^*$.
In order to choose an appropriate sequence $(\veps_n)_{n\geq1}$, consider a real sequence $(a_n)_{n\geq1}$ such that for every $n\geq1$ $F_*^{-1}\Big(F_*(a_n)\Big)=a_n$ and $\lim_{n\to+\infty}a_n=-\infty$. Defining $\veps_n:= F_*(a_n)$ for every $n\geq1$, it is easy to prove that
\begin{equation*}
\begin{split}
\int_{\veps_n}^{1-\veps_n}& F_*^{-1}(u)du=\int_{(F^{-1}_*(\veps_n),F_*^{-1}(1-\veps_n))}x dF_*(dx)\\
&+F_*^{-1}(1-\veps_n)\Big\{1-\veps_n-F_*\Big[(F_*^{-1}(1-\veps_n)^-\Big)\Big]\Big\}:=A(n)+B(n)
\end{split}
\end{equation*}
where $F_*(x^-):=\lim_{y\to x^-}F_*(y)$. To show (iii) we have to prove that $\big(A(n)\big)_{n\geq1}$ and $\big(B(n)\big)_{n\geq1}$ are infinitesimal as $n\to+\infty$. For this purpose, we have to study the asymptotic behaviors of $F_*(x)$ as $|x|\to+\infty$ and of $F^{-1}_*(u)$ as $u\to 0^+$ or $u\to 1^-$. From \eqref{NDA-1sym}, we deduce that for every fixed $\delta\in (0,c_0)$ there exists $\bar{x}=\bar{x}(\delta)$ such that
\[
-\frac{c_0-\delta}{x}\leq F_*(x)\leq -\frac{c_0+\delta}{x} \qquad \text{for every $x\leq-\bar{x}$}
\]
\[
1-\frac{c_0+\delta}{x}\leq F_*(x)\leq 1-\frac{c_0-\delta}{x}\qquad \text{for every $x\geq\bar{x}$}.
\]
Put $A_1=A_1(\delta):=c_0+\delta$ and $A_2=A_2(\delta):=c_0-\delta$ and define two functions $G_1$, $G_2$ by
\[
G_i(x):=\left\{ \begin{array}{ll}
                -\frac{A_i}{x} & \text{if $x\leq-\bar{x}$}\\
                \frac{A_i}{\bar{x}} & \text{if $-\bar{x}< x <\bar{x}$}\\
                1-\frac{A_i}{x} & \text{if $x\geq\bar{x}$}
                \end{array}
                \right.
\]
for $i=1,2$. Then
\[
G_2(x)\leq F_*(x)\leq G_1(x)\qquad\text{for every $x\leq-\bar{x}$},
\]
\[
G_1(x)\leq F_*(x)\leq G_2(x)\qquad\text{for every $x\geq\bar{x}$}.
\]
Hence, for every $u\in(0,G_2(-\bar{x}))$,
\[
-\frac{A_1}{u}=G^{-1}_1(u)\leq F_*^{-1}(u)\leq G^{-1}_2(u)=-\frac{A_2}{u}
\]
and, for every $u\in(G_2(\bar{x}),1)$,
\[
\frac{A_2}{1-u}=G^{-1}_2(u)\leq F_*^{-1}(u)\leq G^{-1}_1(u)=\frac{A_1}{1-u}.
\]
Finally, observe that given $\delta$ and $\bar{x}$, there exists $\bar{n}=\bar{n}(\delta,\bar{x})$ such that
for every $n\geq\bar{n}$
\[
1-\veps_n\in(G_2(\bar{x}),1),\quad \veps_n\in(0,G_2(-\bar{x})),\quad \frac{A_i}{\veps_n}\geq\bar{x}.
\]
Thus, for $n\geq\bar{n}$ one has
\[
F_*^{-1}(1-\veps_n)\geq\bar{x} \qquad \text{and} \qquad F_*^{-1}(\veps_n)\leq -\bar{x}.
\]
With this information on $F_*$ and $F_*^{-1}$ we are ready to prove that $\lim_{n\to+\infty} A(n)=0$ and $\lim_{n\to+\infty} B(n)=0$. Firstly, consider $B(n)$: for every $n\geq\bar{n}$ we know that $\frac{A_2}{\veps_n}\leq F_*^{-1}(1-\veps_n)\leq \frac{A_1}{\veps_n}$ and hence, by monotonicity of $F_*$,
\[
1-\veps_n-F_*\Big(F_*^{-1}(1-\veps_n)^-\Big)\leq 1-\veps_n-F_*\Big(\Big(\frac{A_2}{\veps_n}\Big)^-\Big)\leq 1-\veps_n-G_1\Big(\frac{A_2}{\veps_n}\Big)=\veps_n\Big(\frac{A_1}{A_2}-1\Big).
\]
On the other hand,
\[
1-\veps_n-F_*\Big(F_*^{-1}(1-\veps_n)^-\Big)\geq 1-\veps_n-F_*\Big(\frac{A_1}{\veps_n}\Big)\geq 1-\veps_n-G_2\Big(\frac{A_2}{\veps_n}\Big)=\veps_n\Big(\frac{A_2}{A_1}-1\Big).
\]
Since $F_*^{-1}(1-\veps_n)\geq\bar{x}>0$, we get
\[
\veps_n\Big(\frac{A_2}{A_1}-1\Big)F_*^{-1}(1-\veps_n)\leq B(n) \leq \veps_n\Big(\frac{A_1}{A_2}-1\Big)F_*^{-1}(1-\veps_n).
\]
Note that $A_1\geq A_2$, which entails that $\frac{A_1}{A_2}\geq1$ and hence $B(n)\leq \veps_n\Big(\frac{A_1}{A_2}-1\Big)G^{-1}_1(1-\veps_n)$ and $B(n)\geq \veps_n\Big(\frac{A_2}{A_1}-1\Big)G^{-1}_1(1-\veps_n)$. This implies that
\[
A_1\Big(\frac{A_2}{A_1}-1\Big)\leq B(n)\leq A_1\Big(\frac{A_1}{A_2}-1\Big)
\]
for every $n\geq\bar{n}$, that is, by definition of $A_i$,
\[
-2\delta\leq B(n)\leq \frac{c_0+\delta}{c_0-\delta}2\delta
\]
for every $\delta>0$ and for every $n\geq\bar{n}(\delta,\bar{x})$. Hence, $\lim_{n\to+\infty}B(n)=0$.\newline
Finally, consider $A(n)$. Recall that, from Lemma \ref{distfinita1},
\begin{equation}\label{gamma0=0}
\lim_{R\to+\infty}\int_{(-R,R)}x dF_*(x)=0.
\end{equation}
We will take advantage of this property by splitting the integral $A(n)$ into two integrals, one of them over a symmetric interval about the origin. Fix $n\geq\bar{n}$. If $-F_*^{-1}(\veps_n)\leq F_*^{-1}(1-\veps_n)$, then
\begin{equation}\label{split1}
\begin{split}
\int_{(F_*^{-1}(\veps_n),F_*^{-1}(1-\veps_n))}x dF_*(dx)&=\int_{(F_*^{-1}(\veps_n),-F_*^{-1}(\veps_n))}x dF_*(x)\\
&+\int_{[-F_*^{-1}(\veps_n),F_*^{-1}(1-\veps_n))}x dF_*(dx).
\end{split}
\end{equation}
On the other hand, if $-F_*^{-1}(\veps_n)\geq F_*^{-1}(1-\veps_n)$, then
\begin{equation}\label{split2}
\begin{split}
\int_{(F_*^{-1}(\veps_n),F_*^{-1}(1-\veps_n))}x dF_*(dx)&=\int_{(F_*^{-1}(\veps_n),-F_*^{-1}(\veps_n))}x dF_*(x)\\
&-\int_{[F_*^{-1}(1-\veps_n),-F_*^{-1}(\veps_n))}x dF_*(dx).
\end{split}
\end{equation}
Thanks to \eqref{gamma0=0}, the first integrals on the right hand side of both \eqref{split1} and \eqref{split2} converge to zero when $n\to+\infty$. As concerns the second integrals, recall that for every $n\geq\bar{n}$ one has
\[
\frac{A_2}{\veps_n}\leq -F_*^{-1}(\veps_n)\leq \frac{A_1}{\veps_n} \quad\text{and}\quad \frac{A_2}{\veps_n}\leq F_*^{-1}(1-\veps_n)\leq \frac{A_1}{\veps_n}
\]
and hence
\[
0\leq \int_{[-F_*^{-1}(\veps_n),F_*^{-1}(1-\veps_n))}x dF_*(x)\leq \int_{[\frac{A_2}{\veps_n},\frac{A_1}{\veps_n})}x dF_*(x),
\]
\[
0\leq \int_{[F_*^{-1}(1-\veps_n),-F_*^{-1}(\veps_n))}x dF_*(x)\leq \int_{[\frac{A_2}{\veps_n},\frac{A_1}{\veps_n})}x dF_*(x).
\]
The positiveness can be obtained by further increasing $\bar{n}$, if needed. Thus, in order to prove that the second integrals in \eqref{split1} and \eqref{split2} converge to zero as $n\to+\infty$
we have to show that $\int_{[\frac{A_2}{\veps_n},\frac{A_1}{\veps_n})}x dF_*(x)$ converge to zero as $n\to+\infty$. By partial integration and using the estimates of $F_*$ with $G_1$ and $G_2$ we get
\begin{eqnarray*}
\int_{[\frac{A_2}{\veps_n},\frac{A_1}{\veps_n})}x dF_*(x)&\leq& F_*\Big[\Big(\frac{A_1}{\veps_n}\Big)^-\Big]\frac{A_1}{\veps_n}-F_*\Big[\Big(\frac{A_2}{\veps_n}\Big)^-\Big]\frac{A_2}{\veps_n}
- \int_{(\frac{A_2}{\veps_n},\frac{A_1}{\veps_n}]} F_*(x) dx\\
&\leq& G_2\Big(\frac{A_1}{\veps_n}\Big)\frac{A_1}{\veps_n}-G_1\Big(\frac{A_2}{\veps_n}\Big)\frac{A_2}{\veps_n}
\int_{(\frac{A_2}{\veps_n},\frac{A_1}{\veps_n}]} \Big[1-\frac{A_1}{x}\Big] dx\\
&=& \Big(1-\frac{A_2}{A_1}\veps_n\Big)\frac{A_1}{\veps_n}-\Big(1-\frac{A_1}{A_2}\veps_n\Big)\frac{A_2}{\veps_n}-\Big(\frac{A_1}{\veps_n}-\frac{A_2}{\veps_n}\Big)\\
&+&A_1\log\frac{A_1}{A_2}\\
&=& 2\delta+(c_0+\delta)\log\Big(1+\frac{2\delta}{c_0-\delta}\Big).
\end{eqnarray*}
Thanks to the arbitrariness of $\delta>0$, this entails that the second integrals in \eqref{split1} and \eqref{split2} converge to zero as $n\to+\infty$ and hence $\lim_{n\to+\infty}A(n)=0$. This implies (iii) and concludes the proof.
\end{proof}

\begin{proof}[Proof of Theorem \ref{PropW-3}.]
Let $(\bX,\bV)$ be the random vector given by Lemma \ref{lemma3}. Consider a sequence $(X_j,V_j)_{j\geq1}$ of i.i.d. random vectors with the same distribution of $(\bX,\bV)$ and
such that $(X_j,V_j)_{j\geq1}$ is stochastically independent of $\CB=\sigma\{(\beta_{j,n}: j=1,\dots,n)_{n\geq1}\}$.
By \eqref{stationaryV}  we already know that
$\sum_{j=1}^{N_t}\beta_{j,N_t}V_j$ has probability distribution $\mu_\infty$.
Now, for every $n\geq 0$, denote by $\mu_n$ the law of the random variable
$W_{n+1}$, defined in \eqref{Wn}.
Hence, by convexity
\[
\begin{split}
d^p_p(\mu_t,\mu_\infty) & \leq\sum_{n\geq1}e^{-t}(1-e^{-t})^{n-1}d_p^p(\mu_{n-1},\mu_\infty)\\
&\leq \sum_{n\geq1} e^{-t}(1-e^{-t})^{n-1} \E\Big|\sum_{j=1}^n\beta_{j,n}(X_j-V_j)\Big|^p.\\
\end{split}
\]
Since $\E|X_j-V_j|^p<+\infty$ and $\E(X_j-V_j)=0$, we can make use of the von Bahr-Esseen inequality \eqref{vonBahrEsseen} $-$ conditionally to $\CB$ $-$ and get
\[
\begin{split}
\E\Big|\sum_{j=1}^n\beta_{j,n}(X_j-V_j)\Big|^p
& =\E\Big[ \E \Big|\sum_{j=1}^n\beta_{j,n}(X_j-V_j)\Big|^p\Big|\CB\Big ] \\
& \leq 2\E\Big(\sum_{j=1}^n \beta^p_{j,n} \E\Big[ |X_j-V_j|^p\Big|\CB \Big] \Big)=2\E|\bX-\bV|^p\E\Big(\sum_{j=1}^n\beta^p_{j,n}\Big).\\
\end{split}
\]
From Lemma 2 in \cite{BassLadMatth10}, one has
\begin{equation}\label{momentiM}
\E\Big(\sum_{j=1}^n\beta^p_{j,n}\Big)= \frac{\Gamma(n+\CS(p))}{\Gamma(n)\Gamma(\CS(p)+1)}
\end{equation}
and hence, 
 recalling that
for every $\gamma>-1$ and $0<u<1$
\begin{equation}\label{serie-gamma}
\sum_{n=1}^{+\infty}\frac{\Gamma (\gamma+n)}{\Gamma (n)\Gamma (\gamma+1)}(1-u)^{n-1}=u^{-(\gamma+1)},
\end{equation}
one gets
\[
d_p(\mu_t,\mu_\infty)\leq C_p e^{t\frac{\CS(p)}{p}}
\]
with $C_p:=\left(2\E|\bX-\bV|^p\right)^\frac{1}{p}$.
\end{proof}

\section{Proof of Theorems \ref{alfa<1} and \ref{alfa1}}\label{section5}
In this section we will prove the exponential rates of convergence to equilibrium which have been presented in Section \ref{sec:mainresults}.
We will develop in details only the proof of Theorem \ref{alfa<1} since Theorem \ref{alfa1} can be proved in a very similar way with slight adaptations.
As already anticipated, both Theorems \ref{alfa<1} and \ref{alfa1} descend from an inductive argument $-$ applied to the order of the Wasserstein distance $-$ supplemented by
the probabilistic representation of the solution of \eqref{eq.1} briefly recalled in Section \ref{probrep}.
Recall that for every $n\geq0$, $\mu_n$ is the law of the random variable $W_{n+1}$ introduced in \eqref{Wn}.

We start by proving two simple lemmata:

\begin{lemma}\label{rho-contlim}
Assume that $(H_0)$ holds true for some $p>\a$ such that $d_p(\bar{\mu}_0,\mu_\infty)<+\infty$. Then the function
\begin{equation}\label{serierho}
t\mapsto\sigma_t(p):=\sum_{n\geq1}(1-e^{-t})^{n-1}d^p_p(\mu_{n-1},\mu_\infty)
\end{equation}
is continuous and bounded on every interval $[0,T]$.
\end{lemma}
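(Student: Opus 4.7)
The strategy is to derive a polynomial-in-$n$ upper bound on $d_p^p(\mu_{n-1},\mu_\infty)$. Combined with $(1-e^{-t})^{n-1}\le(1-e^{-T})^{n-1}<1$ on $[0,T]$, such a bound dominates each summand of $\sigma_t(p)$ by the term of a convergent series, whence the Weierstrass M-test yields uniform convergence on $[0,T]$. Since every summand is continuous in $t$, uniform convergence delivers continuity of $\sigma_t(p)$. Boundedness on $[0,T]$ is then automatic from monotonicity: $\sigma_t(p)\le\sigma_T(p)<+\infty$.

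For the polynomial bound I construct a coupling from the optimal one at time zero. Let $(\overline{X}_0,\overline{V}_0)$ realize $d_p(\bar\mu_0,\mu_\infty)$, and take an i.i.d.\ sequence $(X_j,V_j)_{j\ge 1}$ of copies of $(\overline{X}_0,\overline{V}_0)$, independent of the weight array. Then $W_n=\sum_{j=1}^n\beta_{j,n}X_j$ has law $\mu_{n-1}$ by \eqref{Wn}, and $\widetilde{W}_n:=\sum_{j=1}^n\beta_{j,n}V_j$ has law $\mu_\infty$ by \eqref{stationaryV}. Since $(W_n,\widetilde{W}_n)$ is an admissible coupling of $\mu_{n-1}$ and $\mu_\infty$, the convention \eqref{eq.wasserstein} gives
\[
d_p^p(\mu_{n-1},\mu_\infty)\le \E\bigl|W_n-\widetilde{W}_n\bigr|^p \;\text{ if } p\ge 1, \qquad d_p^p(\mu_{n-1},\mu_\infty)\le\bigl(\E\bigl|W_n-\widetilde{W}_n\bigr|^p\bigr)^{p} \;\text{ if } p<1.
\]

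For $p\ge 1$, Minkowski's inequality together with the independence between $\beta_{j,n}$ and $(X_j,V_j)$ yields $\|W_n-\widetilde{W}_n\|_p\le\|\overline{X}_0-\overline{V}_0\|_p\sum_{j=1}^n\E[\beta_{j,n}^p]^{1/p}$; applying Jensen's inequality to the concave map $x\mapsto x^{1/p}$ and invoking \eqref{momentiM}, this becomes
\[
\|W_n-\widetilde{W}_n\|_p\le\|\overline{X}_0-\overline{V}_0\|_p\,n^{1-1/p}\Bigl(\frac{\Gamma(n+\CS(p))}{\Gamma(n)\Gamma(\CS(p)+1)}\Bigr)^{\!1/p}.
\]
Since $-1<\CS(p)<0$ under $(H_0)$, Stirling's formula gives $\Gamma(n+\CS(p))/\Gamma(n)=O(n^{\CS(p)})$, hence $d_p^p(\mu_{n-1},\mu_\infty)\le C\,n^{p-1+\CS(p)}$. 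For $p<1$, subadditivity of $x\mapsto x^p$ gives directly $\E|W_n-\widetilde{W}_n|^p\le\E|\overline{X}_0-\overline{V}_0|^p\sum_j\E[\beta_{j,n}^p]=O(n^{\CS(p)})$ and therefore $d_p^p(\mu_{n-1},\mu_\infty)=O(n^{p\CS(p)})$. In either case $d_p^p(\mu_{n-1},\mu_\infty)$ grows at most polynomially in $n$, so the M-test argument above closes the proof. The only genuine (and mild) obstacle is extracting the polynomial growth through \eqref{momentiM}; the remainder is a standard dominated-series argument.
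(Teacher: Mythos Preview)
Your proof is correct and follows essentially the same route as the paper: build a coupling of $(\mu_{n-1},\mu_\infty)$ from i.i.d.\ copies of the optimal time-zero coupling via \eqref{Wn} and \eqref{stationaryV}, bound $d_p^p(\mu_{n-1},\mu_\infty)$ by $n^{\max(p,1)-1}\,\E\big(\sum_j\beta_{j,n}^p\big)\,d_p^p(\bar\mu_0,\mu_\infty)$, and invoke \eqref{momentiM} for polynomial growth. Your Minkowski-then-Jensen step yields exactly the same bound as the paper's direct use of $|\sum_{j=1}^n a_j|^p\le n^{p-1}\sum|a_j|^p$, and you are simply more explicit than the paper about the Weierstrass M-test delivering continuity.
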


\begin{proof}
For every fixed $t\in[0,T]$, we have to show that the series in \eqref{serierho} converges. In view of the hypothesis $d_p(\bar{\mu}_0,\mu_\infty)<+\infty$, there exists a random vector $(X_0,V_\infty)$ such that $\CL(X_0)=\bar{\mu}_0$, $\CL(V_\infty)=\mu_\infty$ and $d_p(\bar{\mu}_0,\mu_\infty)=\left(\E|X_0-V_\infty|^p\right)^\frac{1}{p}$. Consider a sequence $(X_j,V_j)_{j\geq1}$ of i.i.d. random vectors distributed as $(X_0,V_\infty)$ and independent of $(\beta_{j,n}:j=1,\dots,n)_{n\geq1}$.
By \eqref{stationaryV}, we have that $\sum_{j=1}^n\beta_{j,n} V_j  \stackrel{d}{=}V_\infty $; hence
\[
\begin{split}
\sum_{n\geq1}(1-e^{-t})^{n-1}d^p_p(\mu_{n-1},\mu_\infty)& \leq  \sum_{n\geq1}(1-e^{-t})^{n-1}\E\Big|\sum_{j=1}^n\beta_{j,n}(X_j-V_j)\Big|^p\\
& \leq \sum_{n\geq1}(1-e^{-t})^{n-1}n^{\max(p,1)-1}\E\Big(\sum_{j=1}^n\beta^p_{j,n}\Big)d^p_p(\bar{\mu}_0,\mu_\infty). \\
\end{split}
\]
By \eqref{momentiM}, we conclude that the series in \eqref{serierho} converges. Thus, the function defined in \eqref{serierho} is bounded and continuous at every $t\in[0,T]$.
\end{proof}

\begin{lemma}\label{unicominimo}
Let $\vphi$ be the function defined in \eqref{defPhi}. Assume that $(H_0)$ holds true for some $p>\alpha$ and define 
$\bar{p}:=\sup\{q>\alpha:\vphi(q)<0\}$. Then the function $\varphi$ is continuous on $[\a,\bar p]$ if $\bar p<+\infty$ and
$\varphi(\bar p)\leq 0$ and  on $[\a,\bar p)$ in all the other cases.  Moreover one of the following is true:
\begin{itemize}
 \item[(i)] the function $\vphi$ is strictly decreasing on $(\a,\bar p)$;
\item[(ii)] there exists a point $p_0<\bar p$  such that the function $\vphi$ is strictly decreasing on $(\a,p_0)$ and
strictly increasing on $(p_0,\bar p)$; 
\end{itemize}
\end{lemma}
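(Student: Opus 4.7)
The plan is to exploit the convexity of $\CS$ and reduce the monotonicity analysis of $\varphi$ to the sign analysis of a single auxiliary function $h$.

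First I would recall that, for each realization, $s\mapsto L^s+R^s$ is convex in $s\geq 0$ (since $x\mapsto x^s=e^{s\log x}$ is convex for $x>0$), so $\CS(s)=\E[L^s+R^s]-1$ is convex on the set where it is finite. Moreover, $s\mapsto x^s$ is \emph{strictly} convex unless $x\in\{0,1\}$, so assumption \eqref{H1} forces $\CS$ to be strictly convex. By standard arguments about convex functions, $\CS$ is continuous and $C^\infty$ on the interior of its domain of finiteness, with derivatives obtained by differentiation under the expectation (justified by the local domination of $|L^s\log L|$ by $L^{s-\veps}+L^{s+\veps}$ on a small neighborhood of $s$). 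Since $\CS(\alpha)=0$ and $\CS$ is finite at some $p>\alpha$ with $\varphi(p)<0$, it follows that $\CS$ is finite (hence continuous, hence so is $\varphi$) throughout $[\alpha,\bar p)$; when $\bar p<+\infty$ with $\varphi(\bar p)\le 0$, finiteness of $\CS(\bar p)$ together with dominated convergence (the dominating function being $1+L^{\bar p}+R^{\bar p}$ for $s\in[\alpha,\bar p]$) yields continuity up to and including $\bar p$.

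For the monotonicity, I would introduce
\[
h(s):=s\CS'(s)-\CS(s), \qquad \text{so that}\qquad \varphi'(s)=\frac{h(s)}{s^2}
\]
on $(\alpha,\bar p)$. A direct computation gives $h'(s)=s\CS''(s)>0$ on $(\alpha,\bar p)$ by the strict convexity of $\CS$, so $h$ is strictly increasing on this interval. Since $\CS(\alpha)=0$ and $\CS(p)<0$ for some $p>\alpha$, convexity of $\CS$ forces $\CS'(\alpha)<0$ (otherwise $\CS$ would be non-decreasing on $[\alpha,+\infty)$, contradicting $\CS(p)<0$), and therefore $h(\alpha)=\alpha\CS'(\alpha)<0$.

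The conclusion then follows from a dichotomy for the strictly increasing function $h$ starting from a negative value at $\alpha$: either (i) $h(s)<0$ on all of $(\alpha,\bar p)$, in which case $\varphi'<0$ and $\varphi$ is strictly decreasing throughout; or (ii) $h$ has a (necessarily unique) zero $p_0\in(\alpha,\bar p)$, and then $h<0$ on $(\alpha,p_0)$ and $h>0$ on $(p_0,\bar p)$, so $\varphi$ is strictly decreasing on $(\alpha,p_0)$ and strictly increasing on $(p_0,\bar p)$. The main delicate step is the rigorous justification of the twice-differentiation of $\CS$ under the expectation sign (together with the strict positivity of $\CS''$ on the interior via \eqref{H1}); everything else is a clean consequence of convex analysis.
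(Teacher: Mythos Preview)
Your argument is correct and follows the same skeleton as the paper's: both introduce $h(s)=s\CS'(s)-\CS(s)$, observe $\varphi'=h/s^2$, compute $h'(s)=s\CS''(s)>0$ from the non-degeneracy condition $\P\{(L,R)\in\{0,1\}^2\}<1$, and conclude from the fact that a strictly increasing function starting negative has at most one zero.

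The one genuine difference is where you anchor the negativity of $h$. You evaluate directly at $\alpha$: since $\CS(\alpha)=0$ one has $h(\alpha)=\alpha\CS'(\alpha)$, and strict convexity together with $\CS(p)<0$ for some $p>\alpha$ forces $\CS'(\alpha)<0$. The paper instead pushes $q\to 0^+$ and shows $\limsup_{q\to 0^+}h(q)<0$ using the hypothesis $\P\{L>0\}+\P\{R>0\}>1$ to get $\lim_{q\to 0^+}\CS(q)>0$. Your route is cleaner: it stays inside the relevant interval $(\alpha,\bar p)$, avoids the somewhat delicate analysis of $\CS$ and $\CS'$ near the origin, and in fact never invokes condition \eqref{S(0)} at all---only the non-degeneracy half of \eqref{H1} is needed. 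The paper's route, on the other hand, shows slightly more (namely that $h<0$ on all of $(0,\alpha]$), which is not needed for the lemma as stated. One small cosmetic point: your dominating function for continuity at $\bar p$ should be $2+L^{\bar p}+R^{\bar p}$ rather than $1+L^{\bar p}+R^{\bar p}$, but this is immaterial.
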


\begin{proof}
First of all, by the dominated convergence theorem, one proves that $q\mapsto\CS(q)$ is continuous on its domain. Moreover, one can easily show that for every $q$ belonging to the interior of the domain of $\CS$
\begin{equation}\label{derivataS}
\frac{d}{dq}\CS(q)=\E\Big[\frac{d}{dq}\Big(L^q+R^q\Big)\Big]=\E\Big(L^q \log L+R^q \log R\Big),
\end{equation}
and
\begin{equation}\label{derivataSseconda}
\frac{d^2}{dq^2}\CS(q)=\E\Big[L^q(\log L)^2+R^q(\log R)^2\Big].
\end{equation}
Now consider $\vphi$ on the interval $(0,\bar{p})$; this interval is obviously included in the interior of the domain of $\CS$ and, therefore, $\vphi$ is differentiable on $(0,\bar{p})$ and
\[
\vphi'(q)=\frac{\CS'(q)q-\CS(q)}{q^2}.
\]
Now we claim that there is at most one point $p_0\in(0,\bar{p})$ such that $\vphi'(p_0)=0$, i.e. $\CS'(p_0)p_0-\CS(p_0)=0$. Computing the derivative one gets
\[
\frac{d}{dq}\Big(\CS'(q)q-\CS(q)\Big)=q\CS''(q)
\]
which, from \eqref{derivataSseconda}, is strictly positive since $\P\{(L,R)\in\{0,1\}^2\}<1$ (see \eqref{H1}). Thus, 
$q\mapsto \CS'(q)q-\CS(q)$ is a strictly increasing function and the claim follows if we show that
\begin{equation}\label{limsup}
\limsup_{q\to 0^+}\Big(\CS'(q)q-\CS(q)\Big)<0.
\end{equation}
To this end, fix $q\in(0,\a]$ and note that
\[
L^q\I_{\{L>0\}}\leq 1 \I_{\{0<L\leq1\}}+L^{\a} \I_{\{L>1\}}
\]
and that the right hand side is integrable; an analogous fact obviously holds for $R$. Then, by dominated convergence theorem,
\begin{eqnarray*}
\lim_{q\to0^+}\CS(q)&=&\lim_{q\to0^+}\E\Big[L^q\I_{\{L>0\}}+R^q\I_{\{R>0\}}\Big]-1\\
&=& \P\{L>0\}+\P\{R>0\}-1
\end{eqnarray*}
which, by hypothesis \eqref{S(0)}, is strictly positive and hence $-\lim_{q\to0^+}\CS(q)<0$.\newline
On the other hand, since $\CS$ is convex and $(H_0)$ holds, then $q\CS'(q)<0$ for every $q\in(0,\alpha]$; therefore
\[
\limsup_{q\to0^+}q\CS'(q)\leq0
\]
and hence \eqref{limsup} holds. Thus, we obtain that there is at most one point $p_0$ such that $\vphi'(p_0)=0$. 
The thesis follows since $0=\CS(\a)>\CS(p)$ and hence $0=\varphi(\a)>\vphi(p)$.
\end{proof}

Here we prove a proposition that will give the fundamental tools for the inductive argument that we will use in the proofs of Theorems \ref{alfa<1}-\ref{alfa1}-\ref{alfa2}.

\begin{proposition}
Assume that the assumptions of {\rm Lemma \ref{rho-contlim}} are in force. Consider the function $\sigma_t$ defined in \eqref{serierho} and a real number $q$ such that $1<q\leq p$. Then
\begin{itemize}
\item if $1<q<2$ then 
\begin{equation}\label{sigma<2}
\sigma_t(q)\leq d^q_q(\bar{\mu}_0,\mu_\infty)e^{t\CS(q)}+e^{t\CS(q)}B_{q}\int_0^t e^{-\tau\CS(q)}\sigma^{q}_\tau(1)d\tau;
\end{equation}\\
\item if $p\geq2$ and $q\geq2$ then
\begin{equation}\label{sigma>2}
\sigma_t(q)\leq d^q_q(\bar{\mu}_0,\mu_\infty)e^{t\CS(q)}+e^{t\CS(q)}B_q\int_0^t e^{-\tau\CS(q)}\sigma_\tau(1)\sigma_\tau(q-1)d\tau.
\end{equation}
\end{itemize}
for a suitable constant $B_q$. Moreover, for every $s\geq1$ one has
\begin{equation}\label{dp<sigma}
d^s_s(\mu_t,\mu_\infty)\leq \sigma_t(s).
\end{equation}
\end{proposition}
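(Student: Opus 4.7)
Claim (3) is immediate. From \eqref{Wild1} one has $\mu_t=\sum_{n\geq 1}e^{-t}(1-e^{-t})^{n-1}\mu_{n-1}$, so joint convexity of $(\mu,\nu)\mapsto d_s^s(\mu,\nu)$ (valid for $s\geq 1$) yields $d_s^s(\mu_t,\mu_\infty)\leq \sum_{n\geq 1}e^{-t}(1-e^{-t})^{n-1}d_s^s(\mu_{n-1},\mu_\infty)=e^{-t}\sigma_t(s)\leq\sigma_t(s)$.

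For (1) and (2) my plan is to derive a recursion for $f(n):=d_q^q(\mu_{n-1},\mu_\infty)$ from the Wild identity $\mu_n=\frac{1}{n}\sum_{j=0}^{n-1}Q^+(\mu_j,\mu_{n-1-j})$, and then, via the generating-function identity $\sigma_t(q)=\sum_{n\geq 1} f(n)(1-e^{-t})^{n-1}$, convert it into a differential inequality for $\sigma_t(q)$. Convexity of $d_q^q$ applied to the Wild identity already yields
\[
f(n+1)\leq \frac{1}{n}\sum_{j=0}^{n-1} d_q^q\bigl(Q^+(\mu_j,\mu_{n-1-j}),\,Q^+(\mu_\infty,\mu_\infty)\bigr),
\]
so the crux is to estimate each summand. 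I would do so using the \emph{monotone (quantile) rearrangement} coupling on the line, which is simultaneously optimal for every Wasserstein order $s\geq 1$: pairing $(A_j,A_\infty)$ with $A_j\sim\mu_j$, $A_\infty\sim\mu_\infty$ via this coupling, and similarly $(B,B_\infty)$ with $B\sim\mu_{n-1-j}$, one has $\E|A_j-A_\infty|^s=d_s^s(\mu_j,\mu_\infty)$ for every $s\geq 1$, and using the same $(L,R)$ on both sides of $Q^+$ yields
\[
d_q^q\bigl(Q^+(\mu_j,\mu_{n-1-j}),Q^+(\mu_\infty,\mu_\infty)\bigr)\leq \E\bigl|L(A_j-A_\infty)+R(B-B_\infty)\bigr|^q.
\]
One then invokes the elementary inequality $|a+b|^q\leq |a|^q+|b|^q+K_q(|a|^{q-1}|b|+|a||b|^{q-1})$, valid for every $q\geq 1$ (for $q\geq 2$ via the expansion of $(|a|+|b|)^q$; for $1<q<2$ via the subadditivity $(|a|+|b|)^{q-1}\leq|a|^{q-1}+|b|^{q-1}$), and independence of $(L,R)$ from the other variables to produce a bound of the form $\E[L^q]f(j+1)+\E[R^q]f(n-j)$ plus two cross terms $B_q\E[L^{q-1}R]d_{q-1}^{q-1}(\mu_j,\mu_\infty)d_1(\mu_{n-1-j},\mu_\infty)+B_q\E[LR^{q-1}]d_1(\mu_j,\mu_\infty)d_{q-1}^{q-1}(\mu_{n-1-j},\mu_\infty)$. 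In the range $1<q<2$ a further application of Jensen's inequality (concavity of $r\mapsto r^{q-1}$) converts $d_{q-1}^{q-1}$ into $d_1^{q-1}$, so that the cross terms reduce to pure products of $d_1$'s.

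Multiplying the resulting recursion by $(1-e^{-t})^{n-1}$ and summing over $n\geq 1$, the pure part collapses (via $\sum_{i\leq n}$ followed by a geometric sum) to $(1+\CS(q))\sigma_t(q)/(1-e^{-t})$, while the cross convolution factorises exactly into $\sigma_t(q-1)\sigma_t(1)$ -- or, in the $1<q<2$ case, into $\sigma_t(1)^q$ after combining the $d_1\cdot d_1^{q-1}$ products. This produces a first-order differential inequality of the schematic form $\dot\sigma_t(q)\leq c_q\,\sigma_t(q)+e^{-t}B_q\,\Phi_q\bigl(\sigma_t(1),\sigma_t(q-1)\bigr)$, which after multiplication by the integrating factor $e^{-c_q t}$ integrates to the bounds \eqref{sigma>2} and \eqref{sigma<2}. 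In my opinion the main obstacle is the careful bookkeeping of constants and exponents in the passage from the Wild generating function in the variable $u=1-e^{-t}$ back to $t$: one must verify that, after absorbing $du/dt=1-u=e^{-t}$ into the differential inequality, the integrating factor conspires with the $(1-e^{-t})$ factor on the pure term to produce precisely the exponential $e^{t\CS(q)}$ displayed in the statement, which is the key point that makes the bound useful together with part (3).
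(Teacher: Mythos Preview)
Your approach is essentially the paper's: both use the quantile (monotone rearrangement) coupling, which is simultaneously optimal for every order $s\geq 1$, together with the inequality $(a+b)^q\leq a^q+b^q+c_q(a^{q-1}b+ab^{q-1})$, and then convert the Wild recursion into an integral inequality handled by Gronwall. The paper organizes the bookkeeping around $\rho_t(q):=e^t\sigma_t(q)$ and the identity $(1-e^{-t})^m/m=\int_0^t(1-e^{-\tau})^{m-1}e^{-\tau}\,d\tau$ rather than differentiating in $u$; this is precisely the device that absorbs the extra factor you flagged and yields the exponent $e^{t\CS(q)}$ after dividing $\rho_t$ back by $e^t$.
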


\begin{proof}
Statement \eqref{dp<sigma} is trivial since, by Jensen's inequality, one has
\[
d^s_s(\mu_t,\mu_\infty)\leq\sum_{n\geq0}e^{-t}(1-e^{-t})^n d^s_s(\mu_n,\mu_\infty)=\sigma_t(s).
\]
Now we prove \eqref{sigma<2} and \eqref{sigma>2}. Consider two stochastically independent sequences $(W'_k,V'_k)_{k\geq1}$, $(W''_k,V_k'')_{k\geq1}$,
such that $V'_k\stackrel{d}{=}V''_k\stackrel{d}{=}V_\infty$; additionally, suppose that $(W'_k,V'_k)_{k\geq1}$, $(W''_k,V_k'')_{k\geq1}$, 
are stochastically independent of $((L_n,R_n))_{n\geq1}$, $(I_n)_{n\geq1}$, $(N_t)_{t\geq0}$ and, for every $k\geq1$, $(W'_k,V'_k)$ and $(W''_k,V''_k)$ 
are optimal couplings for $d_s(\mu_{k-1},\mu_\infty)$ for every $s\geq1$. Let us specify that we can always find such random variables since, having defined for every $x\in\R$
\[
\begin{array}{ll}
& F_k(x):=\mu_k((-\infty,x])\\
& F_\infty(x):=\mu_\infty((-\infty,x]),
\end{array}
\]
it suffices to choose $W'_k=F^{-1}_{k-1}(U'_k)$, $W''_k=F^{-1}_{k-1}(U''_k)$,$V'_k=F^{-1}_\infty(U'_k)$, $V''_k=F^{-1}_\infty(U_k'')$ with $(U'_k)_{k\geq1}$, $(U''_k)_{k\geq1}$, 
i.i.d. random variables uniformly distributed on $(0,1)$.
Recall also the following fact: if $a,b\in\R^+$ and $q>1$, then
\begin{equation}\label{sommep}
(a+b)^q\leq a^q+b^q+c_q(a^{q-1}b+ab^{q-1})
\end{equation}
with $c_q:=q$ if $q \in[2,3]$ and $c_q:=q2^{q-3}$ otherwise; see, e.g., Lemma 3.1 in \cite{MatthesToscani}. Now put $\Delta_{n+1}(s):=d^s_s(\mu_n,\mu_\infty)$ for every $n\geq0$ and $s\geq1$. 
Thanks to the independence of $(W'_k,V'_k)$ and $(W''_k,V''_k)$, ($\ref{wildconvWn}$) and ($\ref{sommep}$)
lead to
\[
\begin{split}
\rho_t(q):&=  e^t\sigma_t(q)\leq\Delta_1(q)+\sum_{n\geq1}(1-e^{-t})^n\E\Big|L(W'_{I_n}-V'_{I_n})+R(W''_{n+1-I_n}-V''_{n+1-I_n})\Big|^q\\
&\leq \Delta_1(q)+\sum_{n\geq1}(1-e^{-t})^n\Big[\E\Big(L^q|W'_{I_n}-V'_{I_n}|^q+R^q|W''_{n+1-I_n}-V''_{n+1-I_n}|^q\Big)\\
& \qquad +c_q\E\Big(L^{q-1}R|W'_{I_n}-V'_{I_n}|^{q-1}|W''_{n+1-I_n}-V''_{n+1-I_n}|\\
&\qquad  +LR^{q-1}|W'_{I_n}-V'_{I_n}||W''_{n+1-I_n}-V''_{n+1-I_n}|^{q-1}\Big)\Big]\\
&= \Delta_1(q)+\sum_{n\geq1}\frac{(1-e^{-t})^n}{n}\sum_{k=1}^n\Big[\Big(\E(L^q)\E\Big|W'_k-V'_k\Big|^q+\E(R^q)\E\Big|W''_{n+1-k}-V''_{n+1-k}\Big|^q \\
& \qquad +c_q\Big(\E(L^{q-1}R)\E\Big|W'_k-V'_k\Big|^{q-1}\E\Big|W''_{n+1-k}-V''_{n+1-k}\Big|\\
& \qquad +\E(LR^{q-1})\E\Big|W'_k-V'_k\Big|\E\Big|W''_{n+1-k}-V''_{n+1-k}\Big|^{q-1}\Big)\Big]\\
&= \Delta_1(q)+\sum_{n\geq1}\frac{(1-e^{-t})^n}{n}\sum_{k=1}^n\Big(\E(L^q+R^q)\E\Big|W'_k-V'_k\Big|^q\\
& \qquad +c_q\E(L^{q-1}R+LR^{q-1})\E\Big|W'_k-V'_k\Big|^{q-1}\E\Big|W''_{n+1-k}-V''_{n+1-k}\Big|\Big).\\
\end{split}
\]
Recalling that $(W'_k,V'_k)$ and $(W''_k,V''_k)$ have been defined as optimal couplings for $d_s(\mu_{k-1},\mu_\infty)$ for every $s\geq1$ and putting $\lambda_q:=\E(L^q+R^q)=\CS(q)+1$, $B_q:=c_q\E(L^{q-1}R+LR^{q-1})$, one has
\begin{equation}\label{ro1}
\rho_t(q)\leq\Delta_1(q)+\sum_{n\geq1}\frac{(1-e^{-t})^n}{n}\sum_{k=1}^n\Big(\lmq\Delta_k(q)+B_q\E\Big|W'_k-V'_k\Big|^{q-1}\Delta_{n+1-k}(1)\Big).
\end{equation}
At this stage, we have to distinguish two different situations, i.e. $1<q<2$ or $q\geq2$ (which is possible if $p\geq2$). The reason for this distinction 
lies in the fact that if $q\geq2$ then $q-1\geq1$ and hence (by definition of $(W'_k,V_k')_{k\geq1}$) $\E|W'_k-V'_k|^{q-1}=d^{q-1}_{q-1}(\mu_{k-1},\mu_\infty)$ while, 
if $q<2$ then $q-1<1$ and $\E|W'_k-V'_k|^{q-1}$ is not equal to $d_{q-1}(\mu_{k-1},\mu_\infty)$. 
We begin to consider the case $q\geq2$: as already noticed, $\E|W'_k-V'_k|^{q-1}=\Delta_k(q-1)$ and hence, from \eqref{ro1}, one has
\[
\begin{split}
\rho_t(q) &\leq  \Delta_1(q)+\sum_{n\geq1}\frac{(1-e^{-t})^n}{n}\sum_{k=1}^n \Big(\lmq\Delta_k(q)+B_q\Delta_k(q-1)\Delta_{n+1-k}(1)\Big)\\
&= \Delta_1(q)+\sum_{k\geq1}\sum_{j\geq0} \frac{(1-e^{-t})^{j+k}}{j+k}\Big(\lmq\Delta_k(q)+B_q\Delta_k(q-1)\Delta_{j+1}(1)\Big)\\
&= \Delta_1(q)+\sum_{k\geq1}\sum_{j\geq0} \int_{0}^t (1-e^{-\tau})^{j+k-1} 
e^{-\tau}d\tau \Big(\lmq\Delta_k(q)+B_q\Delta_k(q-1)\Delta_{j+1}(1)\Big)\\
\end{split}
\]
\[
\begin{split}
\phantom{\rho_t(q)}
&= \Delta_1(q)+\int_{0}^t\Big[\sum_{k\geq1}\Big(\sum_{j\geq0}e^{-\tau}(1-e^{-\tau})^j\Big)(1-e^{-\tau})^{k-1}\lmq\Delta_k(q)\\
& \quad +\sum_{k\geq1}\Big(\sum_{j\geq0}(1-e^{-\tau})^je^{-\tau}\Delta_{j+1}(1)\Big)B_q(1-e^{-\tau})^{k-1}\Delta_k(q-1)\Big]d\tau\\
&= \Delta_1(q)+\int_{0}^{t}\Big(\lmq\rho_\tau(q)+B_q\sigma_\tau(1)\rho_\tau(q-1)\Big)d\tau,\\
\end{split}
\]
which means that
\begin{equation*}
\rho_t(q)\leq \Delta_1(q)+\lmq\int_{0}^{t}\rho_\tau(q)d\tau+B_q\int_0^t e^\tau\sigma_\tau(1)\sigma_\tau(q-1)d\tau.
\end{equation*}
Thanks to Gronwall Lemma (whose applicability is guaranteed by Lemma $\ref{rho-contlim}$), it follows that
\begin{equation*}
\rho_t(q)\leq \Delta_1(q)e^{\lmq t}+B_q\int_0^t e^{\lmq(t-\tau)}e^\tau\sigma_\tau(1)\sigma_\tau(q-1)d\tau.
\end{equation*}
Hence, for any $q\geq2$,
\begin{equation}\label{sigmagronwall}
\sigma_t(q)\leq \Delta_1(q)e^{-t(1-\lmq)}+e^{-t(1-\lmq)}B_q\int_0^t e^{\tau(1-\lmq)}\sigma_\tau(1)\sigma_\tau(q-1)d\tau
\end{equation}
which gives \eqref{sigma>2}.
On the other hand, if $1<q<2$ then, by Jensen's inequality, $\E|W'_k-V'_k|^{q-1}\leq \Big(\E|W'_k-V'_k|\Big)^{q-1}=\Delta^{q-1}_k(1)$ and, with the same technique used to get \eqref{sigmagronwall} from \eqref{ro1}, one can easily obtain
\begin{equation*}
\sigma_t(q)\leq \Delta_1(q)e^{-t(1-\lambda_{q})}+e^{-t(1-\lambda_{q})}B_{q}\int_0^t e^{\tau(1-\lambda_{q})}\sigma^{q}_\tau(1)d\tau
\end{equation*}
which gives \eqref{sigma<2}.
\end{proof}

We are now ready to prove Theorem \ref{alfa<1} and Theorem \ref{alfa1}.

\begin{proof}[Proof of Theorem \ref{alfa<1}]
From the hypotheses one knows that $\CS(\alpha)=0$, $\CS(p)<0$ and $p>1$; hence, thanks to the convexity of $\CS$, it is clear that $\CS(1)<0$. Thus, 
from the proof of Theorem 5 in \cite{BassLadMatth10} we have that
\begin{equation}\label{sigma1}
\sigma_t(1)\leq d_1(\bar{\mu}_0,\mu_\infty) e^{t\CS(1)}.
\end{equation}
Define the integer $k_p\geq1$ and the real number $\veps_p \in(0,1]$ such that $p=k_p+\veps_p$, i.e. $k_p$ and $\veps_p$ are, respectively, the integer and the fractionary part of $p$. \newline 
\newline
\underline{Step 1.} Let us assume that
\begin{equation}\label{H}
\vphi(i+\veps_p)\neq \vphi(1)\qquad\text{for every $i=1,\dots,k_p$.}
\end{equation}
Under this assumption we show by mathematical induction that
\begin{equation}\label{induzionesigma}
\sigma_t(i+\veps_p)\leq C_{i+\veps_p}e^{-tK(i+\veps_p)}\quad\text{for every $i=1,\dots,k_p$}
\end{equation}
for suitable constants $0<C_{i+\veps_p}<+\infty$ with
\begin{equation}\label{Kp}
-K(i+\veps_p):=\max\{\CS(i+\veps_p),\CS(1)(i+\veps_p)\}.
\end{equation}
Note that \eqref{induzionesigma}-\eqref{Kp} for $i=k_p$, supplemented by \eqref{dp<sigma}, gives \eqref{ratea<1}.
In order to prove \eqref{induzionesigma}-\eqref{Kp} for $i=1$,
it suffices to combine \eqref{sigma<2} and \eqref{sigma1}  to get
\begin{equation}\label{induzione1}
\begin{split}
\sigma_t(1+\veps_p)\leq &\Delta_1(1+\veps_p)e^{t\CS(1+\veps_p)}\\
&+e^{t\CS(1+\veps_p)}B_{1+\veps_p}\int_0^t C_1 e^{\tau[-\CS(1+\veps_p)+\CS(1)(1+\veps_p)]}d\tau
\end{split}
\end{equation}
with $C_1:=d_1(\bar{\mu}_0,\mu_\infty)^{1+\veps_p}$. By hypothesis \eqref{H} it follows that $-\CS(1+\veps_p)+\CS(1)(1+\veps_p)\neq0$ and hence, solving the integral, one obtains that
\[
\sigma_t(1+\veps_p)\leq C_{1+\veps_p}e^{-tK(1+\veps_p)}
\]
for a suitable constant $0<C_{1+\veps_p}<+\infty$. 
This proves  \eqref{induzionesigma}-\eqref{Kp} for $i=1$.
If $k_p=1$, there is nothing else to be proved.
If $k_p\geq2$ we proceed by induction. Assuming that  \eqref{induzionesigma}-\eqref{Kp}  hold true for every $i=1,\dots,j-1$ ($2\leq j\leq k_p$),  we show that they hold for $i=j$.
By \eqref{sigma>2} and \eqref{sigma1} we have that
\begin{equation}\label{j+eps}
\begin{split}
\sigma_t(j+\veps_p) & \leq \Delta_1(j+\veps_p)e^{t\CS(j+\veps_p)}\\
& +B_{j+\veps_p}C_1 C_{j-1+\veps_p}e^{t\CS(j+\veps_p)}\int_0^t e^{\tau[-\CS(j+\veps_p)+\CS(1)-K(j-1+\veps_p)]}d\tau. \\
\end{split}
\end{equation}
Let us now show that the exponent in the integral above is non-zero, i.e. $-\CS(j+\veps_p)+\CS(1)-K(j-1+\veps_p)\neq0$ whatever is the value of $\vphi(j-1+\veps_p)$. If $\vphi(j-1+\veps_p)>\vphi(1)$ then, by Lemma \ref{unicominimo}, $\vphi(j+\veps_p)\geq \vphi(j-1+\veps_p)$ and hence
\[
\begin{split}
-\CS(j+\veps_p) &+\CS(1)-K(j-1+\veps_p)  < -\CS(j+\veps_p)+\vphi(j-1+\veps_p)\\
& +\vphi(j-1+\veps_p)(j-1+\veps_p)
 = -\CS(j+\veps_p)+\vphi(j-1+\veps_p)(j+\veps_p)\\
& \leq -\CS(j+\veps_p)+\vphi(j+\veps_p)(j+\veps_p)=0.
\end{split}
\]
On the other hand, if $\vphi(j-1+\veps_p)<\vphi(1)$, then
\[
\begin{split} 
-\CS(j+\veps_p)& +\CS(1)-K(j-1+\veps_p) =-\CS(j+\veps_p) +\CS(1)\\
& +(j-1+\veps_p)\CS(1)= -\CS(j+\veps_p) +\CS(1)(j+\veps_p)\neq0
\end{split}
\]
by assumption \eqref{H}.
Having proved that the exponent in the integral in \eqref{j+eps} is non-zero, an explicit integration gives \eqref{induzionesigma}-\eqref{Kp} provided that the equality
\begin{equation}\label{59bis}
\max\{\CS(j+\veps_p),\CS(1)-K(j-1+\veps_p)\}=-K(j+\veps_p)
\end{equation}
holds. Thus, let us prove this equality. If $\vphi(j+\veps_p)<\vphi(1)$ then, by Lemma \ref{unicominimo}, $\vphi(j-1+\veps_p)<\vphi(1)$ and by the inductive step $-K(j-1+\veps_p)=(j-1+\veps_p)\CS(1)$. Hence, 
\[
\max\{\CS(j+\veps_p),\CS(1)-K(j-1+\veps_p)\}=\max\{\CS(j+\veps_p),(j+\veps_p)\CS(1)\}
\]
which is \eqref{59bis}.
On the other hand, let us assume that $\vphi(j+\veps_p)>\vphi(1)$. We need to treat separately two cases. If $\vphi(j-1+\veps_p)<\vphi(1)$ then $-K(j-1+\veps_p)=(j-1+\veps_p)\CS(1)$ and hence \eqref{59bis} holds. If $\vphi(j-1+\veps_p)>\vphi(1)$ then \[
\max\{\CS(j+\veps_p),\CS(1)-K(j-1+\veps_p)\}=\max\{\CS(j+\veps_p),\CS(1)+\CS(j-1+\veps_p)\}
\] 
and, by Lemma \ref{unicominimo}, $\vphi(j+\veps_p)\geq\vphi(j-1+\veps_p)>\vphi(1)$. Hence
\[
\CS(1)+\CS(j-1+\veps_p)<\vphi(j-1+\veps_p)(j+\veps_p)\leq \CS(j+\veps_p).
\] 
This shows that 
\[
\max\{\CS(j+\veps_p),\CS(1)-K(j-1+\veps_p)\}=\CS(j+\veps_p)=\max\{\CS(j+\veps_p),\CS(1)(j+\veps_p)\}
\]
which is \eqref{59bis}.
This concludes the proof when \eqref{H} holds.\newline
\newline
\underline{Step 2.} 
Let us now assume that  $\vphi(p)=\vphi(1)$.  By Lemma \ref{unicominimo} it follows that
 $\vphi(j+\veps_p)<\vphi(1)$ for every $j=1,\dots,k_p-1$. Hence, the proof can be developed by induction as in Step 1 for $j=1,\dots,k_p-1$ and in particular $-K(k_p-1+\veps_p)=-K(p-1)=(p-1)\CS(1)$. Using this equality in \eqref{j+eps}, one gets
\[
\begin{split}
\sigma_t(p) & \leq \Delta_1(p)e^{t\CS(p)} +B_{p}C_1 C_{p-1}e^{t\CS(p)}\int_0^t e^{-p\tau[\vphi(p)-\vphi(1)]}d\tau \\
&= \Delta_1(p)e^{t\CS(p)}+B_{p}C_1 C_{p-1}te^{t\CS(p)}\leq C_p t e^{t\CS(p)}
\end{split}
\]
which gives \eqref{ratea<1} when $\vphi(p)=\vphi(1)$.\newline
\newline
\underline{Step 3.}
It remains to consider the case in which there exists $i^*\in\{1,\dots,k_p-1\}$ such that $\vphi(i^*+\veps_p)=\vphi(1)$. Arguing as in Step 1, one proves that \eqref{induzionesigma}-\eqref{Kp} hold for $i=1,\dots,i^*-1$. Moreover, arguing as in Step 2 one gets 
\[
\sigma_t(i^*+\veps_p)\leq C_{i^*+\veps_p} t e^{t\CS(i^*+\veps_p)}.
\]
Now we prove that \eqref{induzionesigma}-\eqref{Kp} hold for $i=i^*+1$. By \eqref{sigma>2} and the above inequality one gets
\begin{equation}\label{eqeta}
\begin{split}
\sigma_t(i^*+1+\veps_p) & \leq \Delta_1(i^*+1+\veps_p)e^{t\CS(i^*+1+\veps_p)} \\
& + B_{i^*+1+\veps_p}C_1 C_{i^*+\veps_p}e^{t\CS(i^*+1+\veps_p)}\int_0^t \tau e^{\tau[-\CS(i^*+1+\veps_p)+\CS(1)+\CS(i^*+\veps_p)]}d\tau \\
& \leq \Delta_1(i^*+1+\veps_p)e^{t\CS(i^*+1+\veps_p)}\\
&+ B_{i^*+1+\veps_p}C_1 C_{i^*+\veps_p}e^{t\CS(i^*+1+\veps_p)}\int_0^t  e^{\tau[-\CS(i^*+1+\veps_p)+\CS(1)+\CS(i^*+\veps_p)+\eta]}d\tau
\end{split}
\end{equation}
for every $\eta>0$. Moreover, one has
\[
\begin{split}
-\CS(i^*+1+\veps_p)&+ \CS(1)+\CS(i^*+\veps_p) = -\CS(i^*+1+\veps_p)+\vphi(i^*+\veps_p)+\CS(i^*+\veps_p)\\
& \qquad\text{[since $\vphi(i^*+\veps_p)=\vphi(1)=\CS(1)$]}\\
& = -\CS(i^*+1+\veps_p)+\vphi(i^*+\veps_p)(i^*+1+\veps_p) \\
& < -\CS(i^*+1+\veps_p)+\vphi(i^*+1+\veps_p)(i^*+1+\veps_p)=0\\
& \qquad \text{[since, by Lemma \ref{unicominimo}, $\vphi(i^*+\veps_p)<\vphi(i^*+1+\veps_p)$].}
\end{split}
\]
Hence, 
\begin{equation}\label{esponenteeta}
-\CS(i^*+1+\veps_p)+\CS(1)+\CS(i^*+\veps_p)+\eta<0
\end{equation}
for any $\eta>0$ small enough. Thus, \eqref{eqeta} gives 
\[
\sigma_t(i^*+1+\veps_p)\leq C_{i^*+1+\veps_p} e^{-t \mathfrak{K}}
\]
with
\[
- \mathfrak{K}=\max\{\CS(i^*+1+\veps_p), \CS(1)+\CS(i^*+\veps_p)+\eta\}.
\]
By \eqref{esponenteeta} we get $- \mathfrak{K}=\CS(i^*+1+\veps_p)$ which entails \eqref{induzionesigma}-\eqref{Kp} for $i=i^*+1$ since, as already observed, $\vphi(1)<\vphi(i^*+1+\veps_p)$. The proof can be now concluded by induction for $j=i^*+2,\dots, k_p$ as in Step 1.
\end{proof}

\begin{proof}[Proof of Theorem \ref{alfa1}]
The proof follows the same argument of the one used in the proof of Theorem \ref{alfa<1}. In particular we prove by mathematical induction that
\begin{equation}\label{induzionesigma1}
\sigma_t(i+\veps_p)\leq C_{i+\veps_p}e^{-tK(i+\veps_p)}\quad\text{for every $i=2,\dots,k_p$}
\end{equation}
for suitable constants $0<C_{i+\veps_p}<+\infty$, with
\begin{equation}
\label{K2}
-K(i+\veps_p):=\max\{\CS(i+\veps_p),\vphi(2)(i+\veps_p)\}.
\end{equation}
Since $p\geq2$, we use \eqref{sigma>2} as the fundamental tool for the induction. From the proofs of Theorem \ref{PropW-2} (when $\a\neq1$) and Theorem \ref{PropW-3} (when $\a=1$) one has 
\begin{equation}\label{sigma12}
\sigma_t(2)\leq C^2_2 e^{t\CS(2)}.
\end{equation}
By Lyapunov's and Jensen's inequalities one gets 
\[
\sigma_t(1)\leq\sigma_t(2)^\frac{1}{2}\quad\text{and}\quad\sigma_t(1+\veps)\leq\sigma_t(2)^\frac{1+\veps}{2}
\]
for every $0<\veps<1$.
Combining the above inequalities with \eqref{sigma12} one has
\begin{equation}\label{sigma13}
\begin{split}
&\sigma_t(1)\leq \sqrt{2}d_2(\bar{\mu}_0,\mu_\infty)e^{t\vphi(2)}\quad\text{and}\\
&\sigma_t(1+\veps)\leq \sqrt{2}^{1+\veps}d^{1+\veps}_2(\bar{\mu}_0,\mu_\infty)e^{t\frac{\CS(2)}{2}(1+\veps)}.
\end{split}
\end{equation} 
Using \eqref{sigma13} in \eqref{sigma>2}, it follows that
\begin{equation}\label{sigma14}
\begin{split}
\sigma_t(2+\veps_p)\leq &\Delta_1(2+\veps_p)e^{t\CS(2+\veps_p)}\\
&+e^{t\CS(2+\veps_p)}B_{2+\veps_p}\int_0^t C_1 e^{\tau[-\CS(2+\veps_p)+\vphi(2)(2+\veps_p)]}d\tau.
\end{split}
\end{equation}
Noticing that the first step of induction is $i=2$, one can follow the same steps of the proof of Theorem \ref{alfa<1} using \eqref{sigma13} in place of \eqref{sigma1}, \eqref{sigma14} in place of \eqref{induzione1} and $\vphi(2)=\frac{\CS(2)}{2}$ in place of $\vphi(1)=\CS(1)$.

\end{proof}

\section{Proofs of Proposition \ref{propratechi} and Theorems \ref{d1a2}, \ref{alfa2}}\label{sec:proof2}
We start by proving Proposition \ref{propratechi} which provides rates of convergence in Fourier metrics of suitable orders for any $\a\in(0,2]$.

\begin{proof}[Proof of Proposition \ref{propratechi}]
By convexity of the Fourier distance we know that
\[
\chi_{\alpha+\delta}(\mu_t,\mu_\infty)\leq \sum_{n\geq0}e^{-t}(1-e^{-t})^n\chi_{\alpha+\delta}(\mu_n,\mu_\infty).
\]
So we need a bound for  $\chi_{\a+\delta}(\mu_n,\mu_\infty)$.
By \eqref{stationaryV} one gets
\begin{equation}\label{stationaryhat}
 \hat \mu_\infty(\xi)=\E[\prod_{j=1}^{n}\hat{\mu}_\infty(\beta_{j,n}\xi)].
\end{equation}
Now recall that for every $n\geq1$ if $z_1,\dots,z_n,w_1,\dots\,w_n$ are complex numbers such that $|z_i|<1$ and $|w_i|<1$ for every $i=1,\dots,n$, then
\[
\left|\prod_{i=1}^{n}z_i-\prod_{i=1}^{n}w_i\right|\leq \sum_{i=1}^{n}\left|z_i-w_i\right|.
\]
Using this inequality and \eqref{stationaryhat} one obtains
\begin{eqnarray*}
\chi_{\alpha+\delta}(\mu_n,\mu_\infty)&=&\sup_{\xi\neq0}\frac{|\hat{\mu}_n(\xi)-\hat{\mu}_\infty(\xi)|}{|\xi|^{\alpha+\delta}}\\
&=&\sup_{\xi\neq0}
\E\left(\frac{|\prod_{j=1}^{n}\hat{\bar{\mu}}_0(\beta_{j,n}\xi)-\prod_{j=1}^{n}\hat{\mu}_\infty(\beta_{j,n}\xi)|}{|\xi|^{\alpha+\delta}}\right)\\
&\leq& \sup_{\xi\neq0}\E\left(\sum_{j=1}^n\frac{|\hat{\bar{\mu}}_0(\beta_{j,n}\xi)-\hat{\mu}_\infty(\beta_{j,n}\xi)|}{|\xi|^{\alpha+\delta}}\right)\\
&\leq& \E\left(\sup_{\xi\neq0}\sum_{j=1}^{n}\frac{|\hat{\bar{\mu}}_0(\beta_{j,n}\xi)-\hat{\mu}_\infty(\beta_{j,n}\xi)|}{|\beta_{j,n}\xi|^{\alpha+\delta}}|\beta_{j,n}|^{\alpha+\delta}\right)\\
&=&\sup_{y\neq0}\frac{|\hat{\bar{\mu}}_0(y)-\hat{\mu}_\infty(y)|}{|y|^{\alpha+\delta}}\E\Big(\sum_{j=1}^n\beta^{\alpha+\delta}_{j,n}\Big).\\
\end{eqnarray*}
So, by \eqref{momentiM}, one can write
\[
\chi_{\alpha+\delta}(\mu_t,\mu_\infty)\leq \sum_{n\geq0}e^{-t}(1-e^{-t})^n \chi_{\alpha+\delta}(\bar{\mu}_0,\mu_\infty) \frac{\Gamma(n+\CS(\a+\delta))}{\Gamma(n)\Gamma(\CS(\a+\delta)+1)}
\]
and therefore, using \eqref{serie-gamma},
\[
\chi_{\alpha+\delta}(\mu_t,\mu_\infty)\leq \chi_{\alpha+\delta}(\bar{\mu}_0,\mu_\infty)e^{tS(\alpha+\delta)}.
\]
\end{proof}

In order to prove Theorem \ref{d1a2} we need the following

\begin{proposition}\label{rate1alfa2}
For every two probability measures $\mu_1$, $\mu_2$ on $\R$ such that \\ $\int_\R x^2\mu_1(dx)<+\infty$, $\int_\R x^2\mu_2(dx)<+\infty$ and  $\chi_{2+\delta}(\mu_1,\mu_2) <+\infty$, then
\[
d_1(\mu_1,\mu_2)\leq C\chi^\frac{1}{3(2+\delta)}_{2+\delta}(\mu_1,\mu_2)
\]
with
\[
C:=\left(2^\frac{2}{3}+2^\frac{-1}{3}\right)M^\frac{1}{3}_2\frac{1}{\pi}\left(\frac{2^\frac{3+2\delta}{2+\delta}}{3+2\delta}+\frac{4}{2^\frac{1}{2+\delta}}\right)
\]
and $M_2:=\max\{\int_\R x^2\mu_1(dx), \int_\R x^2\mu_2(dx)\}$.
\end{proposition}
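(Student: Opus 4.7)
\textbf{Proof plan for Proposition \ref{rate1alfa2}.}

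The plan is to use the Kantorovich--Rubinstein identity $d_1(\mu_1,\mu_2) = \int_{\mathbb{R}} |F_1(x) - F_2(x)|\,dx$ (where $F_i$ is the cumulative distribution function of $\mu_i$), and then split the integral at $|x| = R$ for a threshold $R > 0$ to be chosen near the end. The outer piece will be controlled by the second-moment hypothesis, and the inner piece by a Fourier-analytic bound on $\|F_1-F_2\|_{L^2}$.

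First I would handle the tail $|x|>R$. Markov's inequality applied to the second moment gives $1-F_i(x) \le M_2/x^2$ for $x>0$ and $F_i(x)\le M_2/x^2$ for $x<0$, hence $|F_1(x)-F_2(x)|\le M_2/x^2$ for $|x|>R$. Integrating yields
\[
\int_{|x|>R}|F_1(x)-F_2(x)|\,dx \ \le\ \frac{2M_2}{R}.
\]
For the inner piece, a straightforward Cauchy--Schwarz gives
\[
\int_{-R}^{R}|F_1(x)-F_2(x)|\,dx \ \le\ \sqrt{2R}\,\|F_1-F_2\|_{L^2(\mathbb{R})}.
\]

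The core Fourier step is to estimate $\|F_1-F_2\|_{L^2}$. Since the two measures have finite second moments, $F_1-F_2 \in L^2(\mathbb{R})$. Writing $g := F_1-F_2$ and integrating by parts in the Fourier integral, one obtains the identity $\widetilde g(\xi) = i\bigl(\hat\mu_1(\xi)-\hat\mu_2(\xi)\bigr)/\xi$, so by Plancherel
\[
\|F_1-F_2\|_{L^2}^{2} \ =\ \frac{1}{2\pi}\int_{\mathbb{R}}\frac{|\hat\mu_1(\xi)-\hat\mu_2(\xi)|^{2}}{\xi^{2}}\,d\xi.
\]
Next I split this integral at $|\xi|=T$. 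For $|\xi|\le T$ I use $|\hat\mu_1(\xi)-\hat\mu_2(\xi)|\le \chi_{2+\delta}(\mu_1,\mu_2)|\xi|^{2+\delta}$, producing a contribution $\chi_{2+\delta}^{2}\, T^{3+2\delta}/(3+2\delta)$ (after absorbing a factor of two from symmetry). For $|\xi|>T$ I use the trivial bound $|\hat\mu_1-\hat\mu_2|\le 2$, producing $8/T$. Choosing $T$ to equate the two contributions (which, after a short computation, gives $T=2^{1/(2+\delta)}\chi_{2+\delta}^{-1/(2+\delta)}$) yields
\[
\|F_1-F_2\|_{L^2}^{2}\ \le\ \frac{\chi_{2+\delta}^{\,1/(2+\delta)}}{\pi}\left(\frac{2^{(3+2\delta)/(2+\delta)}}{3+2\delta}+\frac{4}{2^{\,1/(2+\delta)}}\right),
\]
which is exactly the quantity appearing inside the stated constant $C$.

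Finally, I would choose $R$ as a function of $M_2$ and $\|F_1-F_2\|_{L^2}$ so as to equalise the orders of $\sqrt{2R}\|F_1-F_2\|_{L^2}$ and $2M_2/R$ in the cube-root sense; a short calculation shows this produces an $M_2^{1/3}$ prefactor and an exponent $1/(3(2+\delta))$ on $\chi_{2+\delta}$, matching the statement. The main obstacle is the Fourier step: justifying carefully that $F_1-F_2\in L^2$ and that the boundary terms in the integration-by-parts vanish (both of which follow from the finite-second-moment assumption), and then pinning down the precise numerical constant $(2^{2/3}+2^{-1/3})$, which requires the specific choice of $R$ (rather than a fully free optimisation) to reproduce the stated form of $C$.
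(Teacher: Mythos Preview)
Your proposal is correct and is precisely the route the paper has in mind: the paper does not spell out its own proof but simply says the result follows ``the same argument, with slight changes, of the proof of Theorem~2.21 of \cite{CarrilloToscani}''. Your outline --- the Kantorovich--Rubinstein formula for $d_1$, a tail/bulk split at $|x|=R$ controlled respectively by the second moment and by Cauchy--Schwarz, the Plancherel identity $\|F_1-F_2\|_{L^2}^2=(2\pi)^{-1}\int_{\R}|\hat\mu_1(\xi)-\hat\mu_2(\xi)|^2\xi^{-2}\,d\xi$, a second split at $|\xi|=T$, and optimisation over $T$ and $R$ --- is exactly that Carrillo--Toscani argument.
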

The proof of this proposition can be done following the same argument, with slight changes, of the proof of Theorem 2.21 of \cite{CarrilloToscani}.

\begin{proof}[Proof of Theorem \ref{d1a2}]

It is worth noticing that $\chi_{2+\delta}(\bar{\mu}_0,\mu_\infty)$ is finite. Indeed, as already observed, both $\bar \mu_0$ and $\mu_\infty$ have equal mean (more precisely, zero mean) and equal variance. Thus, Proposition 2.6 in \cite{CarrilloToscani} entails the finiteness of $\chi_{2+\delta}(\bar{\mu}_0,\mu_\infty)$ provided that $\int_\R |x|^{2+\delta}\bar{\mu}_0(dx)<+\infty$ and $\int_\R |x|^{2+\delta}\mu_\infty(dx)<+\infty$; the former integral is finite by hypothesis, the latter is finite since $\CS(2+\delta)<0$.
Under the assumptions of Theorem \ref{alfa2}, one has $\int_\R x^2\mu_t(dx)=\int_\R x^2\mu_\infty(dx)$ for every $t\geq0$ and hence 
one can apply Proposition \ref{rate1alfa2} to get
\[
d_1(\mu_t,\mu_\infty)\leq C\chi^\frac{1}{3(2+\delta)}_{2+\delta}(\mu_t,\mu_\infty)
\]
with $C$ that does not depend on $t$. 
Now Proposition \ref{propratechi} gives
\begin{equation*}
d_1(\mu_t,\mu_\infty)\leq C \chi^\frac{1}{3(2+\delta)}_{2+\delta}(\bar{\mu}_0,\mu_\infty) e^{t\frac{\CS(2+\delta)}{3(2+\delta)}}
\end{equation*}
which proves Theorem \ref{d1a2}.
\end{proof}

 We are now ready to prove Theorem \ref{alfa2}.

\begin{proof}[Proof of Theorem \ref{alfa2}]
Define the integer $k_p\geq2$ and the real number $\veps_p\in(0,1]$ such that $p=k_p+\veps_p$. 
We prove by induction that 
\begin{equation}\label{sigmaLim2}
\sigma_t(j+\veps_p)\leq \left\{
												\begin{array}{ll}
												C_{j+\veps_p}e^{-tR_{j+\veps_p}} & \text{if $\CS(j+\veps_p)\neq\frac{1}{3}\vphi(2+\veps_p)$}\\
												C_{j+\veps_p}te^{-tR_{j+\veps_p}} & \text{if $\CS(j+\veps_p)=\frac{1}{3}\vphi(2+\veps_p)$}
												\end{array}
												\right. 
\end{equation}
for $j=2,\dots,k_p$, where $-R_{j+\veps_p}:=\max\{\CS(j+\veps_p),\frac{1}{3}\vphi(2+\veps_p)\}$.
If $k_p=2$ then $p=2+\veps_p$ and, in order to use \eqref{sigma>2} with $q=2+\veps_p$, we have to compute $\sigma_t(1)$ and $\sigma_t(1+\veps_p)$. Since $\int_\R|x|^{2+\veps_p}\bar\mu_0(dx)<+\infty$ by hypothesis, one clearly has $\int_\R|x|^{2+\veps_p}\mu_n(dx)<+\infty$ for every $n\geq1$. Moreover, $\int_\R|x|^2 \mu_n(dx)=\int_\R|x|^2\mu_\infty(dx)$. 
Then, Proposition \ref{rate1alfa2} and Jensen's inequality give
\[
\begin{split}
\sigma_t(1)&\leq  C \sum_{n\geq0}e^{-t}(1-e^{-t})^n\chi_{2+\veps_p}(\mu_n,\mu_\infty)^\frac{1}{3(2+\veps_p)}\\
& \leq C \Big(\sum_{n\geq0}e^{-t}(1-e^{-t})^n\chi_{2+\veps_p}(\mu_n,\mu_\infty)\Big)^\frac{1}{3(2+\veps_p)}
\end{split}
\]
and hence, arguing as in the proof of Proposition \ref{propratechi},
\[
\sigma_t(1)\leq C_1 e^{t\frac{\CS(2+\veps_p)}{3(2+\veps_p)}}=C_1 e^{\frac{t}{3}\vphi(2+\veps_p)}
\]
where $C_1:=C\chi_{2+\veps_p}(\bar{\mu}_0,\mu_\infty)^\frac{1}{3(2+\veps_p)}$.
Moreover, by \eqref{wass12}, 
\[
\sigma_t(1+\veps_p)=\sum_{n\geq0}e^{-t}(1-e^{-t})^n d^{1+\veps_p}_{1+\veps_p}(\mu_n,\mu_\infty)\leq \Gamma_2^{1+\veps_p}.
\]
Thus, using \eqref{sigma>2} and the above estimates for $\sigma_t(1)$ and $\sigma_t(1+\veps_p)$, one has
\[
\begin{split}
\sigma_t(2+\veps_p)& \leq \Delta_1(2+\veps_p)e^{t\CS(2+\veps_p)}\\
&+C_1 \Gamma_2^{1+\veps_p} B_{2+\veps_p}e^{t\CS(2+\veps_p)}\int_0^t e^{-\tau\CS(2+\veps_p)}e^{\tau\frac{1}{3}\vphi(2+\veps_p)}dt\\
& = \sigma_t(2+\veps_p) \leq \Delta_1(2+\veps_p)e^{t\CS(2+\veps_p)}\\
&+C_1 \Gamma_2^{1+\veps_p} B_{2+\veps_p}e^{t\CS(2+\veps_p)}\int_0^t e^{-\tau\CS(2+\veps_p)\frac{5+3\veps_p}{6+3\veps_p}}dt.
\end{split}
\]
Clearly $\CS(2+\veps_p)\frac{5+3\veps_p}{6+3\veps_p}\neq0$ and hence we get
\[
\sigma_t(2+\veps_p)\leq C_{2+\veps_p}e^{t \max\{\CS(2+\veps_p),\frac{1}{3}\vphi(2+\veps_p)\}}
\]
which is the thesis since $\max\{\CS(2+\veps_p),\frac{\vphi(2+\veps_p)}{3}\}=\frac{\vphi(2+\veps_p)}{3} =-R_{2+\veps_p}$. This concludes the proof if $k_p=2$. On the other hand, if $k_p\geq3$, assume that \eqref{sigmaLim2} holds true for $j=2,\dots,k_p-1$. Using \eqref{sigma>2} we get
\[
\begin{split}
\sigma_t(k_p+\veps_p) & \leq \Delta_1(k_p+\veps_p)e^{t\CS(k_p+\veps_p)}\\
& +B_{k_p+\veps_p}C_1 C_{k_p-1+\veps_p}e^{t\CS(k_p+\veps_p)}\int_0^t e^{\tau[-\CS(k_p+\veps_p)+\frac{1}{3}\vphi(2+\veps_p)]}\sigma_t(k_p-1+\veps_p)d\tau. 
\end{split}
\]
By the inductive hypothesis, $\sigma_t(k_p-1+\veps_p)\leq D$ for a suitable constant $D>0$ and hence
\[
 \begin{split}
\sigma_t(k_p+\veps_p) & \leq \Delta_1(k_p+\veps_p)e^{t\CS(k_p+\veps_p)}\\
& + D B_{k_p+\veps_p}C_1 C_{k_p-1+\veps_p}e^{t\CS(k_p+\veps_p)}\int_0^t e^{\tau[-\CS(k_p+\veps_p)+\frac{1}{3}\vphi(2+\veps_p)]}d\tau. 
\end{split}
\]
and the thesis follows.
\end{proof}

\section{Proof of Theorem \ref{teoremaDistFinita}}\label{Sec_proofUltima}

The proof of this theorem is inspired by the proof of Lemma 3.19 in \cite{ChristophWolf}.
See also Lemma 3.1  in \cite{Stout79}.

\textbf{Proof of Part (i)}
Let $\delta\in[0,1)$ such that $k+\delta=1+\frac{p-\alpha}{p\alpha}$. Let $\cm_0,\cp_0,\tcm_1,\tcp_1$, $\dots,\tcm_{k-1},\tcp_{k-1}$ be given in Proposition \ref{sviluppoFinf}.
Note that, since $|\beta| \not =1$, $c_0^+>0$ and $c_0^->0$>
Recall that if $U$ is a random variable uniformly distributed on $(0,1)$ then $(F^{-1}_0(U),F^{-1}_\infty(U))$ is a coupling for $d_p(\bar{\mu}_0,\mu_\infty)$ and hence
\begin{equation}\label{dist-quantili}
\begin{split}
d_p(\bar{\mu}_0,\mu_\infty)&\leq \Big(\E\Big|F^{-1}_0(U)-F^{-1}_\infty(U)\Big|^p\Big)^{\frac{1}{p}\wedge 1 }\\ &\leq \Big(\E\Big|F^{-1}_0(U)-G^{-1}(U)\Big|^p
\Big)^{\frac{1}{p}\wedge 1 }+\Big(\E\Big|G^{-1}(U)-F^{-1}_\infty(U)\Big|^p\Big)^{\frac{1}{p}\wedge 1 }
\end{split}
\end{equation}
where $G$ is a real-valued function of real argument defined by
\begin{equation}\label{defG}
G(x):=\left\{ \begin{array}{ll}
                \sum_{i=0}^{k-1}\frac{\tcm_i}{|x|^{(i+1)\alpha}} & \text{if $x<-M_1$}\\
                \sum_{i=0}^{k-1}\frac{\tcm_i}{M^{(i+1)\alpha}_1} & \text{if $-M_1\leq x<M_2$}\\
                1-\sum_{i=0}^{k-1}\frac{\tcp_i}{x^{(i+1)\alpha}} & \text{if $x\geq M_2$}\\
                \end{array}
        \right.
\end{equation}
with $M_1>0$, $M_2>0$ being such that $G$ is a distribution function, i.e.
\begin{itemize}
\item [(1)] $\sum_{i=0}^{k-1}\frac{\tcm_i}{M^{(i+1)\alpha}_1}<1$;
\item [(2)] $\sum_{i=0}^{k-1}\frac{\tcm_i}{M^{(i+1)\alpha}_1}\leq 1-\sum_{i=0}^{k-1}\frac{\cp_i}{M^{(i+1)\alpha}_2}$;
\item [(3)] $G'(x)\geq0$ for every  $x\in \R\setminus\{-M_1, M_2\}$.
\end{itemize}
As for (1) and (2), it suffices to choose $M_1$ and $M_2$ sufficiently large. Regarding (3), note that
\[
G'(x)=\left\{ \begin{array}{ll}
                \sum_{i=0}^{k-1}\frac{\tcm_i(i+1)\alpha}{|x|^{(i+1)\alpha+1}} & \text{if $x\in(-\infty,-M_1)$}\\
                0 & \text{if $x\in(-M_1,M_2)$}\\
                \sum_{i=0}^{k-1}\frac{\tcp_i(i+1)\alpha}{x^{(i+1)\alpha+1}} & \text{if $x\in(M_2,+\infty)$},
                \end{array}
                \right.
\]
which is positive for sufficiently large $M_i$'s.
Thus, the function $G$ is a distribution function and $G^{-1}$ is its quantile. With a simple change of variables in \eqref{dist-quantili} we can write

\begin{equation*}
\begin{split}
d_p(\bar{\mu}_0,\mu_\infty)&\leq \Big(\int_\R\Big|G^{-1}(F_0(y))-y\Big|^p dF_0(y)\Big)^{\frac{1}{p} \wedge 1}\!\!\!+\Big(\int_\R\Big|G^{-1}(F_\infty(y))-y\Big|^p dF_\infty(y)\Big)^{\frac{1}{p} \wedge 1}\\
&=C_0+\Big(\int_{(-\Mbar,+\Mbar)^c}\Big|G^{-1}(F_0(y))-y\Big|^p dF_0(y)\Big)^{\frac{1}{p} \wedge 1}+C_\infty\\
&+\Big(\int_{(-\Mbar,+\Mbar)^c}\Big|G^{-1}(F_\infty(y))-y\Big|^p dF_\infty(y)\Big)^{\frac{1}{p} \wedge 1}
\end{split}
\end{equation*}
where $\Mbar \geq \max\{M_1,M_2\}$, $C_0:=\Big(\int_{(-\Mbar,+\Mbar)}\Big|G^{-1}(F_0(y))-y\Big|^p dF_0(y)\Big)^{\frac{1}{p} \wedge 1}<+\infty$,
$C_\infty:=\Big(\int_{(-\Mbar,+\Mbar)}\Big|G^{-1}(F_\infty(y))-y\Big|^p dF_\infty(y)\Big)^{\frac{1}{p} \wedge 1}<+\infty$. Hence,
the finiteness of the $d_p$ distance between $\bar{\mu}_0$ and $\mu_\infty$ follows if we show that
\[
\begin{aligned}
& \int_{(-\Mbar,+\Mbar)^c}\Big|G^{-1}(F_0(y))-y\Big|^p dF_0(y)<+\infty,\\
& \int_{(-\Mbar,+\Mbar)^c}\Big|G^{-1}(F_\infty(y))-y\Big|^p dF_\infty(y)<+\infty.
\end{aligned}
\]
Let us start studying the first integral confining ourselves to the calculus on the interval $(\Mbar,+\infty)$ (the integral on $(-\infty,-\Mbar)$
can be treated in the same way): we introduce the function
\[
H(x):= F_0(x)-G(x)\qquad(x\geq \Mbar).
\]
By hypotheses \eqref{condDatoInizDx} and \eqref{zeta} we deduce
that $H(x)=O\Big(\frac{1}{|x|^{(k+\delta)\alpha}}\Big)$. Assuming, without loss of generality that $F_0(\overline M) > G( M_2)$,
using Taylor expansion of $G^{-1}(F_0(x))$ around $G(x)$ we have that
\begin{eqnarray*}
G^{-1}(F_0(x))&=&G^{-1}(G(x)+H(x))=G^{-1}(G(x))+H(x)\frac{d}{du}G^{-1}(u)\Big|_{u=G(x)}\\
&+&\frac{H^2(x)}{2}\frac{d^2}{du^2}G^{-1}(u)\Big|_{u=G(x)+\theta H(x)}
\end{eqnarray*}
for some $\theta\in(0,1)$. Now, putting $R_x:=G^{-1}(G(x)+\theta H(x))$, we obtain
\begin{equation}\label{stima-integranda}
G^{-1}(F_0(x))-x=\frac{H(x)}{G'(x)}\Big(1-\frac{H(x)}{2}\frac{G''(R_x)G'(x)}{(G'(R_x))^3}\Big).
\end{equation}
From the definition of $G$ given in ($\ref{defG}$) we compute
\[
G'(x)=\sum_{i=0}^{k-1}\frac{\tcp_i (i+1)\alpha}{|x|^{(i+1)\alpha+1}}=\frac{\alpha \cp_0}{|x|^{\alpha+1}}(1+o(1)) \qquad\text{for $x\rightarrow+\infty$}
\]
\[
G''(x)=-\sum_{i=0}^{k-1}\frac{\tcp_i(i+1)\alpha[(i+1)\alpha+1]}{|x|^{(i+1)\alpha+2}}=-\frac{\alpha(\alpha+1) \cp_0}{|x|^{\alpha+2}}(1+o(1))\qquad\text{for $x\rightarrow+\infty$}
\]
and therefore \eqref{stima-integranda} becomes
\begin{equation}\label{stima-integranda2}
\begin{split}
\Big|G^{-1}(F_0(x))-x\Big|=\frac{1}{\alpha \cp_0} &|H(x)||x|^{\alpha+1}(1+o(1)) \\
& \cdot \left|1+\frac{\alpha+1}{\alpha \cp_0}\frac{H(x)}{2}\frac{|R_x|^{2\alpha+1}}{|x|^{\alpha+1}}(1+o(1))\right| \\
\end{split}
\end{equation}
for $x\rightarrow+\infty$.
We now show that $H(x)\frac{|R_x|^{2\alpha+1}}{|x|^{\alpha+1}}=o(1)$ for $x\rightarrow+\infty$. By further increasing $\Mbar$, if needed, we can say that there exist $A_+>0$, $A_->0$ such that for every $x\geq\Mbar$
\[
G_-(x)\leq G(x)\leq G_+(x)
\]
where $G_\pm(x):=1-\frac{A_\pm}{|x|^\alpha}$. In particular
\[
\Big ( \frac{A_+}{1-y} \Big)^{\frac{1}{\alpha}} =G^{-1}_+(y)\leq G^{-1}(y)\leq G^{-1}_-(y)=\Big ( \frac{A_-}{1-y} \Big)^{\frac{1}{\alpha}}
\]
for every $y$ sufficiently close to $1$. Since $G(x)+\theta H(x)\longrightarrow 1$ for $x\rightarrow+\infty$, we obtain
\begin{eqnarray*}
R_x&=& G^{-1}(G(x)+\theta H(x))\leq G^{-1}_-(G(x)+\theta H(x))=\frac{A^\frac{1}{\alpha}_-}{\Big(1-[G(x)+\theta H(x)]\Big)^\frac{1}{\alpha}}\\
&\leq& \frac{A^\frac{1}{\alpha}_-}{\Big(1-G_+(x)-\theta H(x)\Big)^\frac{1}{\alpha}}=\frac{A^\frac{1}{\alpha}_-}{\Big(1-\Big(1-\frac{A_+}{|x|^\alpha}\Big)-\theta H(x)\Big)^\frac{1}{\alpha}}\\
&=& \frac{A^\frac{1}{\alpha}_- |x|}{\Big(A_+-\theta |x|^\alpha H(x)\Big)^\frac{1}{\alpha}}=|x| \Big(\Big(\frac{A_-}{A_+}\Big)^\frac{1}{\alpha}+o(1)\Big)
\end{eqnarray*}
where $o(1)$ is for $x\rightarrow+\infty$. Recalling that $H(x)=O\Big(\frac{1}{|x|^{(k+\delta)\alpha}}\Big)$, we can conclude that
\[
|H(x)|\frac{|R_x|^{2\alpha+1}}{|x|^{\alpha+1}} \leq |H(x)|\frac{|x|^{2\alpha+1}}{|x|^{\alpha+1}}\Big(\Big(\frac{A_-}{A_+}\Big)^\frac{1}{\alpha}+o(1)\Big)^{2\a+1}=o(1)
\]
for $x\rightarrow+\infty$ and therefore, from \eqref{stima-integranda2},
\[
\Big|G^{-1}(F_0(x))-x\Big|=\frac{1}{\alpha \cp_0}|H(x)||x|^{\alpha+1}(1+o(1)).
\]
Hence, for suitable positive constants $C, C', C'', C'''$, we can write
\begin{eqnarray*}
\int_{\Mbar}^{+\infty}\Big|G^{-1}(F_0(x))-x\Big|^p dF_0(x)&\leq& C\int_{\Mbar}^{+\infty}\Big(|H(x)||x|^{\alpha+1}\Big)^p dF_0(x)\\
&\leq& C' \int_{\Mbar}^{+\infty}\Big(\frac{\zeta(|x|)}{|x|^{(k+\delta)\alpha}}|x|^{\alpha+1}\Big)^p dF_0(x)\\
&=& C'\int_{\Mbar}^{+\infty}\zeta^p(|x|)|x|^{p\alpha(1-k-\delta)+p} dF_0(x)\\
&=& C' \int_{\Mbar}^{+\infty}\zeta^p(|x|)|x|^\alpha dF_0(x)
\end{eqnarray*}
From Lemma \ref{integralezeta} below the last term is finite by further increasing $\Mbar$ in order to have $\Mbar>B$.\newline
This argument, which proves that $\int_{\Mbar}^{+\infty}\Big|G^{-1}(F_0(y))-y\Big|^p dF_0(y)<+\infty$ can be extended to the same integral with $(-\infty,-\Mbar)$ as domain of integration.\newline
The integral $\int_{(-\Mbar,+\Mbar)^c}\Big|G^{-1}(F_\infty(y))-y\Big|^p dF_\infty(y)$ can be treated in the
same way noticing that, in view of Proposition $\ref{sviluppoFinf}$, $F_\infty$ satisfies conditions similar to \eqref{condDatoInizSx} and \eqref{condDatoInizDx}
with $\zeta(x)={|x|^{-s+(1+\frac{p-\a}{\a p})\a}}$. This shows that $d_p(\bar{\mu}_0,\mu_\infty)<+\infty$.\newline

\textbf{Proof of Part (ii)} Suppose that $\beta=-1$ (the case $\beta=1$ can be done in an analogous way).
We start as in the proof of Part (i) writing
\[
\begin{split}
&\left(\int_0^1 \Big| F^{-1}_0(u) -F^{-1}_\infty(u)\Big|^p du\right)^{\frac{1}{p}\wedge 1 } \\
&\leq \left(\int_0^{F_0(0)} \Big| F^{-1}_0(u)-F^{-1}_\infty(u)\Big|^p du\right)^{\frac{1}{p}\wedge 1 }
+\left(\int_{F_0(0)}^1 \Big| F^{-1}_0(u)\Big|^p du\right)^{\frac{1}{p}\wedge 1 }\\
&\qquad  \qquad +\left(\int_{F_0(0)}^1 \Big| F^{-1}_\infty(u)\Big|^p du\right)^{\frac{1}{p}\wedge 1 }\\
&\leq \left(\int_0^{F_0(0)} \Big| F^{-1}_0(u)-F^{-1}_\infty(u)\Big|^p du\right)^{\frac{1}{p}\wedge 1 }+\left(\int_{[F^{-1}_0(F_0(0)),+\infty)} |x|^p dF_0(x)\right)^{\frac{1}{p}\wedge 1 }\\
&\qquad \qquad +  \left(\int_{[F_\infty^{-1}(F_0(0)),+\infty)} |x|^p dF_\infty(x)\right)^{\frac{1}{p}\wedge 1 }.\\
\end{split}
\]
The first integral can be treated with the same argument of Part (i); the second integral is finite by hypothesis;
the third, by partial integration, is finite whenever $\int_0^{+\infty}(1-F_\infty(x))x^{p-1}dx$ is finite.
Now, since $\CS(s)<0$, Proposition \ref{sviluppoFinf} gives $1-F_\infty(x)=O(x^{-s})$ and hence $\int_0^{+\infty}(1-F_\infty(x))x^{p-1}dx \leq C  \int_1^{+\infty} x^{p-1-s}dx<+\infty$.
\begin{flushright} $\square$ \end{flushright}

The proof of the following lemma is left to the reader.

\begin{lemma}\label{integralezeta}
Let $\zeta$ be the function defined in {\rm Theorem \ref{teoremaDistFinita}} and suppose that \eqref{zeta} holds true. Then
\[
\int_B^{+\infty}\zeta^p(x)x^\alpha dF_0(x)<+\infty.
\]
\end{lemma}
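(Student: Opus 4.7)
The plan is to reduce the integral $\int_B^{+\infty}\zeta^p(x)x^\alpha dF_0(x)$ to the convergent integral $\int_B^{+\infty}\zeta^p(x)/x\, dx$ via an integration-by-parts (tail) argument, using the fact that $1-F_0(x)$ decays like $x^{-\alpha}$, which exactly cancels the factor $x^\alpha$ inside.

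First, I would extract from \eqref{condDatoInizDx} the bound
\[
\bar F_0(x):=1-F_0(x)\leq \frac{C}{x^\alpha}\qquad (x\geq B),
\]
possibly after enlarging $B$: the leading term of the asymptotic expansion is $\tcp_0/x^\alpha$, while the remainder $\zeta(x)/|x|^{(1+(p-\alpha)/(p\alpha))\alpha}$ is of smaller order (note $\zeta$ is bounded on $[B,+\infty)$ by monotonicity). Second, I would observe that $\zeta(x)\to0$ as $x\to+\infty$: since $\zeta$ is positive and non-increasing, any positive lower bound would make $\int_B^{+\infty}\zeta^p(x)/x\,dx$ diverge, contradicting \eqref{zeta}.

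Next, writing $dF_0=-d\bar F_0$ and using Lebesgue--Stieltjes integration by parts on $[B,R]$ (valid because both $x\mapsto \zeta^p(x)x^\alpha$ and $\bar F_0$ are of bounded variation on compact intervals),
\[
\int_B^R\zeta^p(x)x^\alpha dF_0(x)=\zeta^p(B)B^\alpha\bar F_0(B)-\zeta^p(R)R^\alpha\bar F_0(R)+\int_B^R\bar F_0(x)\,d\bigl(\zeta^p(x)x^\alpha\bigr).
\]
The first boundary term is a finite constant, and the second tends to $0$ as $R\to+\infty$ because $R^\alpha\bar F_0(R)\leq C$ while $\zeta^p(R)\to0$. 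Splitting the last integral as
\[
\int_B^R\bar F_0(x)x^\alpha\,d\zeta^p(x)+\alpha\int_B^R\bar F_0(x)\zeta^p(x)x^{\alpha-1}dx,
\]
the first piece is controlled using $\bar F_0(x)x^\alpha\leq C$ and the fact that $d\zeta^p$ is a negative measure of total mass $\zeta^p(B)-\zeta^p(R)\leq\zeta^p(B)$; the second piece is bounded by $\alpha C\int_B^R\zeta^p(x)/x\,dx$, which is finite by \eqref{zeta}. Letting $R\to+\infty$ yields the lemma.

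The only delicate point is the integration-by-parts step with functions that may not be absolutely continuous ($F_0$ can have atoms, $\zeta^p$ is only continuous and monotone). This is handled cleanly in the Stieltjes framework: both integrands are of bounded variation and $\zeta^p$ is continuous, so no jump-coincidence issues arise, and the identity above is simply the Stieltjes integration-by-parts formula. All remaining estimates are algebraic and use only the two structural facts already isolated, namely $\bar F_0(x)=O(x^{-\alpha})$ and $\int_B^{+\infty}\zeta^p(x)/x\,dx<+\infty$.
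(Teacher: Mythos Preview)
Your argument is correct: the integration-by-parts reduction to $\int_B^{+\infty}\zeta^p(x)/x\,dx$ works exactly as you describe, and the Stieltjes product rule $d(\zeta^p x^\alpha)=x^\alpha\,d\zeta^p+\zeta^p\,d(x^\alpha)$ is justified since both factors are continuous and of bounded variation on compacts. The paper itself leaves the proof of this lemma to the reader, so there is nothing further to compare.
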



\appendix
\section{Proof of Proposition \ref{sviluppoFinf}}

In this appendix we prove Proposition \ref{sviluppoFinf}. The main point is to recall the well-known asymptotic expansion for the probability distribution function of an $\a$-stable law with $\a\neq1$:

\begin{proposition}[\cite{ChristophWolf,Ibragimov,Zolotarev1986}]\label{sviluppoFalfaDiv1}
Let $F_\alpha$ be the distribution function of an $\alpha$-stable law of parameters $(\lm,\beta)$ with $\alpha\neq1$.\newline
If $|\beta|\neq1$, then for every $k\geq1$
\begin{eqnarray}
\label{codasx}
&&F_\alpha(x)=\frac{\cm_0}{|x|^{\alpha}}+\frac{\cm_1}{|x|^{2\alpha}}+\dots+\frac{\cm_{k-1}}{|x|^{k\alpha}}+O\Big(\frac{1}{|x|^{(k+1)\alpha}}\Big)\qquad\text{for $x\rightarrow-\infty$}\\
\label{codadx}
&&1-F_\alpha(x)=\frac{\cp_0}{x^{\alpha}}+\frac{\cp_1}{x^{2\alpha}}+\dots+\frac{\cp_{k-1}}{x^{k\alpha}}+O\Big(\frac{1}{x^{(k+1)\alpha}}\Big)\qquad\text{for $x\rightarrow+\infty$}
\end{eqnarray}
where $c^\pm_0$ are related to $(\lambda,\beta)$ by \eqref{constant} and
\begin{equation}\label{Ci}
c^\pm_i:=\frac{(-1)^i \tilde{\lm}^{i+1}\Gamma(\alpha(i+1))\sin(\frac{\pi}{2}(i+1)(\alpha\pm\tilde{\beta}))}{\pi(i+1)!}
\end{equation}
for $i=1,\dots,k-1$ where $(\tilde{\lm},\tilde{\beta})$ are related to $(\lm, \beta)$ by 
\[
\tilde{\beta}:= \frac{2}{\pi}\arctan(\beta\tan(K(\a)\frac{\pi}{2})), \qquad\tilde{\lm}:=\frac{\lm}{\cos(\tilde{\beta}\frac{\pi}{2})}
\]
with
\[
K(\a):= \left\{
				\begin{array}{ll}
				 \a & \text{if $\a\leq1$}\\
				 \a-2 & \text{if $\a>1$}.
				 \end{array}
				 \right.
\]
Moreover, if $\beta=-1$ {\rm[}$\beta=1$, resp.{\rm]}, then \eqref{codasx} {\rm[}\eqref{codadx}, resp.{\rm]} holds true and $1-F_\alpha(x)=O\Big(\frac{1}{|x|^\eta}\Big)$ {\rm[}$F_\alpha(x)=O\Big(\frac{1}{|x|^\eta}\Big)$, 
resp.{\rm]} for $x\rightarrow+\infty$ {\rm[}for $x\rightarrow-\infty$, resp.{\rm]}
for every $\eta>0$.
\end{proposition}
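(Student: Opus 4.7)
My plan is to combine the probabilistic representation \eqref{misturaVA} of the steady state with the asymptotic expansion of stable tails recalled in Proposition \ref{sviluppoFalfaDiv1}. Conditioning on $M^{(\alpha)}_\infty$ in the mixture identity \eqref{Finf-Falfa} gives, for $x>0$,
\[
1-F_\infty(x)=\E\bigl[1-F_\alpha\bigl(x\,(M^{(\alpha)}_\infty)^{-1/\alpha}\bigr)\bigr],
\]
and symmetrically for $x<0$, so the task reduces to substituting the expansion of $1-F_\alpha$ inside the expectation and controlling the remainder uniformly in $M^{(\alpha)}_\infty$.

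First I would establish the finiteness of the required moments. The hypothesis $\CS(\alpha(k+\delta))<0$ together with $\CS(\alpha)=0$ and the convexity of $\CS$ gives $\CS(\alpha j)<0$ for every $j\in(1,k+\delta]$, so Proposition \ref{Prop0-1}(ii) yields $m_i=\E[(M^{(\alpha)}_\infty)^i]<\infty$ for $i=1,\dots,k$ (with $m_1=1$) and $\E[(M^{(\alpha)}_\infty)^{k+\delta}]<\infty$. By Proposition \ref{sviluppoFalfaDiv1} there exist $Y_0,C>0$ such that
\[
\Bigl|1-F_\alpha(y)-\sum_{i=0}^{k-1}\frac{c_i^+}{y^{(i+1)\alpha}}\Bigr|\leq C\,y^{-(k+\delta)\alpha}\qquad(y\geq Y_0).
\]

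The core step is a truncation argument. Writing $M$ for $M_\infty^{(\alpha)}$ and splitting according to whether $x M^{-1/\alpha}\geq Y_0$, i.e.\ whether $M\leq (x/Y_0)^\alpha$, the uniform expansion above applies on the good event and one obtains
\[
1-F_\infty(x)=\sum_{i=0}^{k-1}\frac{c_i^+\,\E[M^{i+1}\I_{\{M\leq (x/Y_0)^\alpha\}}]}{x^{(i+1)\alpha}}+R_1(x)+R_2(x),
\]
where $|R_1(x)|\leq C x^{-(k+\delta)\alpha}\E[M^{k+\delta}]$ comes from the uniform remainder, and $|R_2(x)|\leq \P(M>(x/Y_0)^\alpha)$ from the bad event. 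Markov's inequality at exponent $k+\delta$ gives $R_2(x)=O(x^{-(k+\delta)\alpha})$, and the truncated moments satisfy $\E[M^{i+1}\I_{\{M>(x/Y_0)^\alpha\}}]\leq (x/Y_0)^{-\alpha(k+\delta-i-1)}\E[M^{k+\delta}]$, contributing $O(x^{-(k+\delta)\alpha})$ to each main term. Setting $\tilde c_i^+=c_i^+ m_{i+1}$ yields \eqref{codeFinfdx}; the left tail \eqref{codeFinfsx} is obtained verbatim. In the one-sided case $\beta=\pm 1$, one tail of $F_\alpha$ decays faster than any power, and the same truncation scheme with exponent $\eta$ gives the bound $O(x^{-\eta})$ as soon as $\E[M^{\eta/\alpha}]<\infty$, i.e.\ $\CS(\eta)<0$.

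For part (ii), by Lemma \ref{distfinita1} we may assume $\gamma_0=0$, so $V_\infty\stackrel{d}{=}S_1 M^{(1)}_\infty$ with $S_1$ symmetric Cauchy of scale $\lambda=c_0\pi$; this makes $F_\infty$ symmetric. The Cauchy distribution admits the explicit expansion $1-F_1(y)=\frac{1}{\pi}\sum_{i\geq 0}\frac{(-1)^i\lambda^{2i+1}}{(2i+1)\,y^{2i+1}}$ truncable to any order with a power remainder, so that exactly the same truncation scheme applies, now using finiteness of the odd moments $m_{2i+1}$ for $i=0,\dots,k-1$ (ensured by convexity from $\CS(2k-1+\delta)<0$). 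The main technical obstacle is the rigorous handling of the remainder: the asymptotic bound on $1-F_\alpha(y)$ is valid only for large $y$, so the substitution $y=xM^{-1/\alpha}$ must be compensated by a moment estimate on $M$ in the complementary regime. Everything hinges on having just enough moments of $M^{(\alpha)}_\infty$, which is precisely what the hypothesis $\CS(\alpha(k+\delta))<0$ provides.
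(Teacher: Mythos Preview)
You are proving the wrong statement. The proposition you were asked to prove is Proposition \ref{sviluppoFalfaDiv1}, the classical asymptotic expansion of the tails of an $\alpha$-stable distribution function $F_\alpha$ itself. Your argument instead establishes the expansion of $F_\infty$, the distribution function of the mixture $S_\alpha (M_\infty^{(\alpha)})^{1/\alpha}$ --- that is, you are proving Proposition \ref{sviluppoFinf}. Indeed, you explicitly invoke Proposition \ref{sviluppoFalfaDiv1} as an input (``the asymptotic expansion of stable tails recalled in Proposition \ref{sviluppoFalfaDiv1}''), and the equations you derive are \eqref{codeFinfsx}--\eqref{codeFinfdx}, not \eqref{codasx}--\eqref{codadx}. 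Your argument says nothing about why the pure stable law $F_\alpha$ admits such an expansion.

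For the record, the paper does not give an independent proof of Proposition \ref{sviluppoFalfaDiv1} either: it is a standard fact, and the paper simply states that it ``follows from Theorem 1.4 of \cite{ChristophWolf} by a simple integration of the density therein,'' with further references to \cite{Ibragimov} and \cite{Zolotarev1986}. As a side remark, your conditioning-and-truncation argument is exactly the route the paper sketches for Proposition \ref{sviluppoFinf} (``Combining \eqref{Finf-Falfa}, Proposition \ref{sviluppoFalfaDiv1} and \eqref{coda1} we obtain Proposition \ref{sviluppoFinf}''), and your write-up of that step is more detailed than what the paper provides --- but it is an answer to a different question.
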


This proposition follows from Theorem 1.4 of \cite{ChristophWolf} by a simple integration of the density therein. See also Section 2.4 of \cite{Ibragimov} and Section 2.4 of \cite{Zolotarev1986}.\newline
On the other hand, if $\alpha=1$ and $c_0^+=c_0^-$, then $F_1$ is a symmetric Cauchy distribution of scale parameter $\lm=c_0^+\pi$ and a straightforward asymptotic expansion gives
\begin{equation}\label{coda1}
F_1(x)=\sum_{i=0}^{k-1}\frac{(-1)^i \lm^{2i+1}}{\pi(2i+1)|x|^{2i+1}}+O\left(\frac{1}{|x|^{2k+1}}\right)\qquad\text{for $x\to-\infty$}
\end{equation}
for every $k\geq1$. Combining \eqref{Finf-Falfa}, Proposition \ref{sviluppoFalfaDiv1} and \eqref{coda1} we obtain Proposition \ref{sviluppoFinf}.


\section{Proof of Theorem \ref{thm1}}\label{AppendixB}

\begin{proof}[Proof of Theorem \ref{thm1}]
The theorem can be proved in a very similar way of Theorem 1 of \cite{BassLadMatth10}. 
In particular it is based on the following simple result:  
{\it Let $(X_{n})_{n \geq 1}$ be a sequence of iid random variables with common distribution function $F_0$. 
Assume that $(a_{jn})_{j \geq 1, n \geq 1}$ is a sequence of positive weights 
such that 
\[
 \lim_{n \to +\infty} \sum_{j=1}^n a_{jn} =a_\infty \qquad \text{and} \qquad \lim_{n \to +\infty} \max_{1 \leq j \leq n} a_{jn}=0.
\]
If $F_0$ satisfy \eqref{NDA-1sym} with $\a=1$, $c_0>0$ and 
\eqref{gamma} holds, then $\sum_{j=1}^n a_{jn} X_j$ converges in law to a Cauchy random variable of scale parameter $\pi a_\infty c_0$ and position parameter $\a_\infty \gamma_0$.}
To prove this result, according to the classical general central limit theorem for array of independent random variables, it is enough to prove that  
  \begin{align}
    \label{condition1-uno-bis}
    &\lim_{n \to +\infty} \zeta_{n}(x) 
    = \frac{a_\infty c_0}{|x|} \qquad (x\not =0), \\
    \label{clt3condtris}
    &\lim_{\epsilon \to 0^+}\lim_{n \to +\infty} \sigma^{2}_{n}(\epsilon) = 0 , \\
    \label{condition3-bis}
    &\lim_{n \to +\infty} \eta_{n} = \a_\infty \gamma_0 
  \end{align}
  are simultaneously satisfied where
  \begin{align*}
    \zeta_n(x) &:=  
    \I \{x<0 \} \sum_{j=1}^{n} Q_{j,n}(x) + \I \{x>0\} \sum_{j=1}^{n} (1-Q_{j,n}(x)) \qquad (x \in \R), \\
  \sigma_{n}^2(\epsilon) &:=  
    \sum_{j=1}^{n}\Big\{ \int_{(-\epsilon,+\epsilon]} x^2\,dQ_{j,n}(x)-\Big( \int_{(-\epsilon,+\epsilon]} x\,dQ_{j,n}(x)\Big)^2 \Big\} \qquad (\epsilon>0), \\
    \eta_{n} &: =   \sum_{j=1}^{n}\Big\{ 1- Q_{j,n}(1) -Q_{j,n}(-1) + \int_{(-1,1]} x\, dQ_{j,n}(x)\Big \}, \\
    Q_{j,n}(x)&:=F_0\big(a_{j,n}^{-1}x\big) \quad \text{with the convention  $F_0(\cdot/0):=\I_{[0,+\infty)}(\cdot)$}. \\
  \end{align*}
See, e.g., Theorem 30 and Proposition 11 in \cite{fristedgray}. Conditions \eqref{condition1-uno-bis} and \eqref{clt3condtris}
can be proved exactly as the analogous conditions of Lemma 5 in \cite{BassLadMatth10}. As for condition \eqref{condition3-bis} note that
\[
\eta_{n}=\sum_{j=1}^n a_{jn} \int_{(-1/a_{jn},1/a_{jn}]} x dF_0(x)+
\sum_{j=1}^n a_{jn}\left[1-F_0\Big(\frac{1}{a_{jn}}\Big)\frac{1}{a_{jn}}-F\Big(-\frac{1}{a_{jn}}\Big)\frac{1}{a_{jn}}\right].
\]
Using the assumption on $F_0$ and $(a_{jn})_{jn}$ it follows immediately that
\[
\lim_n \sum_{j=1}^n a_{jn} \int_{(-1/a_{jn},1/a_{jn}]} x dF_0(x)= a_\infty \gamma_0
\]
and
\[
 \qquad \lim_n \sum_{j=1}^n a_{jn}\left[1-F_0\Big(\frac{1}{a_{jn}}\Big)\frac{1}{a_{jn}}-F\Big(-\frac{1}{a_{jn}}\Big)\frac{1}{a_{jn}}\right]=a_\infty(c_0-c_0)=0.
\]
This gives  \eqref{condition3-bis}.
Using this result one obtains the analogous of Lemma 5  in \cite{BassLadMatth10} for $\a=1$. At this stage the proof can be completed following the proof of Theorem 1 in 
 \cite{BassLadMatth10}. 
\end{proof}

\end{document}